\newcommand{\tr}{\mathop\mathrm{tr}}
\newcommand{\spn}{\mathop\mathrm{span}}
\newtheorem{thm}{Theorem}[section]
\newtheorem{theorem}[thm]{Theorem}
\newtheorem{corollary}[thm]{Corollary}
\newtheorem{example}[thm]{Example}
\newtheorem{lemma}[thm]{Lemma}
\newtheorem{proposition}[thm]{Proposition}
\newtheorem{definition}[thm]{Definition}
\theoremstyle{remark}
\newtheorem{remark}[thm]{Remark}
\newcommand{\RR}{\mathbb R}
\numberwithin{equation}{section}
\begin{document}

\title{Regular two-distance sets}
\author[Casazza, Tran, Tremain
 ]{Peter G. Casazza, Tin T. Tran, and Janet C. Tremain}
\address{Department of Mathematics, University
of Missouri, Columbia, MO 65211-4100}

\thanks{The authors were supported by
 NSF DMS 1609760 and 1906725.}

\email{Casazzap@missouri.edu}
\email{tinmizzou@gmail.com}
\email{Tremainjc@missouri.edu}
\subjclass{42C15}

\begin{abstract}
This paper makes a deep study of regular two-distance sets. A set of unit vectors $X$ in Euclidean space $\RR^n$ is said to be regular two-distance set if the inner product of any pair of its vectors is either $\alpha$ or $\beta$, and the number of $\alpha$ (and hence $\beta$) on each row of the Gram matrix of $X$ is the same. We present various properties of these sets as well as focus on the case where they form tight frames for the underling space. We then give some constructions of regular two-distance sets, in particular, two-distance frames, both tight and non-tight cases. It has been seen that every known example of maximal two-distance sets are tight frames. However, we supply for the first time an example of a non-tight maximal two-distance frame. Connections among two-distance sets, equiangular lines and quasi-symmetric designs are also discussed. For instance, we give a sufficient condition for constructing sets of equiangular lines from regular two-distance sets, especially from quasi-symmetric designs satisfying certain conditions.

\end{abstract}

\maketitle
\pagebreak

\pagebreak

\section{Introduction and Preliminaries}
A set $X$ in Euclidean space $\RR^n$ is called a {\it two-distance set} if there are two numbers $a$ and $b$ such that the distances between any pairs of points of $X$ are either $a$ or $b$. If a two-distance set $X$ lies in the unit sphere of $\RR^n$, then $X$ is called a spherical two-distance set. In other words, a set of unit vectors in $n$-dimensional Euclidean space is a spherical two-distance set if there are two real numbers $\alpha$ and $\beta$, $-1\leq \alpha, \beta\leq 1$ such that the inner product of any two vectors of $X$ are either $\alpha$ or $\beta$. We will say that $\alpha$ and $\beta$ are the angles of $X$.

Studying the maximum size $g(n)$ of a spherical two-distance set of distinct vectors in $\RR^n$ is a classical problem in distance geometry. The first major result was obtained in \cite{DJS}, where the authors showed the ``harmonic'' bound:
\begin{equation}\label{eneq1}
g(n)\leq \frac{n(n+3)}{2}.
\end{equation}
Moreover, they showed that this bound is achieved when $n=2, 6, 22$, in which cases it related to the maximal set of equiangular lines in dimension $n+1$. The result of \cite{DJS} also showed that this bound can be attained only if $n=(2k+1)^2-3$ for $k\in \mathbb{N}$, $n>2$.

For $n<7$, it is known that $g(2)=5, g(3)=6, g(4)=10, g(5)=16$ and $g(6)=27$, see \cite{L, M}.  In \cite{M}, Musin showed that the size of a spherical two-distance set of distinct vectors in $\RR^n$ with angles $\alpha, \beta$ satisfying $\alpha+\beta\geq0$ is not greater than $\frac{n(n+1)}{2}$. The author also extended the maximum bound $g(n)$ for $n<40, n\not=22, 23.$

Recently, using the results of Musin and combining the known bounds for $n<359$, Glazyrin and Yu made a serious advance when they showed that
\begin{equation}\label{eneq2}
g(n)=\frac{n(n+1)}{2},
\end{equation}
 for all $n\geq 7$ with possible exceptions for $n=(2k+1)^2-3, k\in \mathbb{N}$, see \cite{GY}.

 In this paper, we study regular two-distance sets, in particular two-distance tight frames. These sets are special cases of spherical two-distance sets. Before giving the definition, let us fix some notations used in the paper. 
 
 For any natural number $m$, we denote by $[m]$ the set $[m]=\{1, 2, \ldots, m\}$. We use $\bf{1}$ to denote the vector of all 1, $I$ the identity matrix, and $J$ the matrix whose all its entries are 1. The order of these matrices are always clear from context. For a set of vectors $\{x_i\}_{i=1}^m$ in $\RR^n$, its {\it Gram matrix} is the $m$ by $m$ matrix with entries $G_{ij}=\langle x_i, x_j\rangle$ for $i, j\in [m]$.

Let $\{x_i\}_{i=1}^m$ be a spherical two-distance set in $\RR^n$ at angles $\alpha$ and $\beta$. For each $i\in [m]$, we define the sets
\[\mathcal{I}^\alpha_i=\{j\in [m]: \langle x_i, x_j\rangle=\alpha\}, \quad \mathcal{I}^\beta_i=\{j\in [m]: \langle x_i, x_j\rangle=\beta\}.\]
It is clear that $|\mathcal{I}^\alpha_i|+|\mathcal{I}^\beta_i|=m-1$ for all $i\in [m]$. 

In general, the cardinalities of these sets, $\mathcal{I}^\alpha_i$ and $\mathcal{I}^\beta_i$, depend on $i$. When they are independent with $i$, we say that the set is {\it regular}.
\begin{definition}
	A spherical two-distance set $X=\{x_i\}_{i=1}^m$ in $\RR^n$ at angles $\alpha$ and $\beta$ is said to be regular if the cardinality of the set $\mathcal{I}^\alpha_i$ (and hence the set $\mathcal{I}^\beta_i)$ does not depend on $i$. We will call the numbers $k_\alpha:=|\mathcal{I}^\alpha_i|$, and $k_\beta:=|\mathcal{I}^\beta_i|$, the multiplicities of $\alpha$ and $\beta$, respectively.
\end{definition}
The following theorem will give a simple characterization of regular two-distance sets.
\begin{theorem}\label{thm1}
	A spherical two-distance set is regular if and only if its Gram matrix has constant row sum.
\end{theorem}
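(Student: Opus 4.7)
The plan is to prove both directions by directly computing the $i$-th row sum of the Gram matrix $G$ in terms of $|\mathcal{I}^\alpha_i|$ and $|\mathcal{I}^\beta_i|$, and then exploiting the identity $|\mathcal{I}^\alpha_i|+|\mathcal{I}^\beta_i|=m-1$ to reduce to a single unknown. Because the $x_i$ are unit vectors, $G_{ii}=1$, and the off-diagonal entries in row $i$ take the value $\alpha$ exactly $|\mathcal{I}^\alpha_i|$ times and the value $\beta$ exactly $|\mathcal{I}^\beta_i|$ times. Hence
\[
\sum_{j=1}^m G_{ij} \;=\; 1 + \alpha\,|\mathcal{I}^\alpha_i| + \beta\,|\mathcal{I}^\beta_i| \;=\; 1 + \beta(m-1) + (\alpha-\beta)\,|\mathcal{I}^\alpha_i|.
\]

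For the forward implication, assume the set is regular, so $|\mathcal{I}^\alpha_i|=k_\alpha$ for every $i$. Substituting into the formula above immediately gives that every row sum equals the same constant $1+\alpha k_\alpha+\beta k_\beta$. For the reverse implication, assume all row sums equal some common value $c$. Then for every $i$ we have $(\alpha-\beta)\,|\mathcal{I}^\alpha_i|=c-1-\beta(m-1)$. Since $\alpha\ne\beta$ (otherwise the set has only one distance and is not a genuine two-distance set), we can divide by $\alpha-\beta$ to conclude that $|\mathcal{I}^\alpha_i|$ is independent of $i$, and then $|\mathcal{I}^\beta_i|=m-1-|\mathcal{I}^\alpha_i|$ is also independent of $i$. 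Hence the set is regular.

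I expect no real obstacle here; the only subtlety is the degenerate case $\alpha=\beta$, which is implicitly excluded by the phrase ``two-distance set'' (and should be acknowledged briefly in the proof so that the division by $\alpha-\beta$ is justified). Equivalently, the statement can be packaged as: the Gram matrix of a two-distance set has constant row sum if and only if $G\mathbf{1}=c\mathbf{1}$ for some $c$, i.e., $\mathbf{1}$ is an eigenvector of $G$, and the computation above shows this eigenvalue equation is equivalent to the regularity condition $|\mathcal{I}^\alpha_i|\equiv k_\alpha$.
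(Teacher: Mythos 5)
Your proof is correct and follows essentially the same route as the paper's: both compute the row sum as $1+\alpha|\mathcal{I}^\alpha_i|+\beta|\mathcal{I}^\beta_i|$, eliminate $|\mathcal{I}^\beta_i|$ via $|\mathcal{I}^\alpha_i|+|\mathcal{I}^\beta_i|=m-1$, and use $\alpha\neq\beta$ to conclude $|\mathcal{I}^\alpha_i|$ is constant. The only cosmetic difference is that you solve for $|\mathcal{I}^\alpha_i|$ directly while the paper subtracts the equations for two indices $i\neq\ell$; your explicit remark that $\alpha\neq\beta$ must be justified is a welcome touch of care.
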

\begin{proof}
		Assume $X=\{x_i\}_{i=1}^m$ is a spherical two-distance set with angles $\alpha$ and $\beta$. Let $G$ be its Gram matrix. If $X$ is regular, then obviously $G$ has constant row sum. 
		
		Conversely, assume that the Gram matrix $G$ has constant row sum $c$. Then 
		\begin{align*}
		\sum_{j=1}^{m}\langle x_i, x_j\rangle&=1+|\mathcal{I}^\alpha_i|\alpha+|\mathcal{I}^\beta_i|\beta\\
		&=1+|\mathcal{I}^\alpha_i|\alpha+(m-1-|\mathcal{I}^\alpha_i|)\beta=c, \mbox{ for all } i\in [m].
		\end{align*}
	
		Therefore, for any $i\not=\ell$, we have
		\[(|\mathcal{I}^\alpha_i|-|\mathcal{I}^\alpha_\ell|)\alpha-(|\mathcal{I}^\alpha_i|-|\mathcal{I}^\alpha_\ell|)\beta=0,\]
		or equivalently,
		\[(|\mathcal{I}^\alpha_i|-|\mathcal{I}^\alpha_\ell|)(\alpha-\beta)=0.\]
		Since $\alpha\not=\beta$, we get $|\mathcal{I}^\alpha_i|=|\mathcal{I}^\alpha_\ell|$, which is the desired claim.
\end{proof}

Thus, for a regular two-distance set $X$, the sum of the entries in every row of its Gram matrix are the same. We will call this common number the {\bf Grammian constant} of $X$.
It is clear that this constant is always greater than or equal to zero and less than the cardinality of $X$.

Frames have been shown very useful in variety of applications, see the books \cite{CG, W} and references therein. Therefore, in this paper, we are also interested in the case where spherical two-distance sets form frames for the underlining spaces. The following are some basic facts of frame theory. For further background on finite frame theory, we recommend the books \cite{CG, W}.

\begin{definition}
	A family of vectors $X=\{x_i\}_{i=1}^m$ in $\RR^n$ is said to be a frame for $\RR^n$ if there are constants $0 < A \leq B < \infty$ so that for all $x\in \RR^n$ we have
	\[A\|x\|^2\leq \sum_{i=1}^{m}|\langle x, x_i\rangle|^2\leq B\|x\|^2.\]
	$A$ and $B$ are called the lower and upper frame bounds, respectively.
	The frame is called an $A$-tight frame if A = B and a Parseval frame if
	$A = B = 1$.
\end{definition}
It is well-known that $X$ is a frame for $\RR^n$ if and only if it spans the space. Given a frame $X=\{x_i\}_{i=1}^m$ for $\RR^n$, the corresponding synthesis operator, also denoted by $X$, is the $n\times m$ matrix whose $j$th column is $x_j$. The adjoint matrix $X^*$ is called the analysis operator, and the frame operator of $X$ is then $S:=XX^*$. Thus, we have 
\[Sx=\sum_{i=1}^{m}\langle x, x_i\rangle x_i, \mbox{ for all } x\in \RR^n.\]  $X$ is an $A$-tight frame if and only if its frame operator $S$ is a multiple of identity, namely $S=A.I$. In other words, $X$ is an $A$-tight frame for $\RR^n$ if it satisfies the reconstruction formula:
\[Ax=\sum_{i=1}^{m}\langle x, x_i\rangle x_i,  \mbox{ for all } x\in \RR^n.\]

When all vectors of an $A$-tight frame of $m$ vectors for $\RR^n$ are unit norm, it is known that $A=m/n$. Note also that $X^*X$ is the Gram matrix of $X$.

Another important characterization of tight frames is using frame potential.
\begin{definition}
	Let $X=\{x_i\}_{i=1}^m$ be a collection of vectors in $\RR^n$. The frame potential for $X$ is the quantity
	\[FP(X)=\sum_{i=1}^{m}\sum_{j=1}^{m}|\langle x_i, x_j\rangle|^2.\]
\end{definition}
\begin{theorem}[\cite{BF}]\label{potential}
	Let $m\geq n$. If $X=\{x_i\}_{i=1}^m$ is any set of unit norm vectors in $\RR^n$, then
	\[FP(X)\geq \frac{m^2}{n}\] with equality if and only if $X$ is a tight frame.
\end{theorem}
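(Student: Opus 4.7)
The plan is to rewrite the frame potential as the squared Frobenius norm of the frame operator, and then apply the Cauchy--Schwarz inequality to the eigenvalues, using the fact that the trace of the frame operator is $m$ because the vectors are unit norm.

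More concretely, first I would observe that if $G = X^*X$ is the Gram matrix, then
\[FP(X) = \sum_{i,j} |\langle x_i, x_j\rangle|^2 = \tr(G^2).\]
Using $\tr((X^*X)^2) = \tr((XX^*)^2) = \tr(S^2)$, where $S = XX^*$ is the frame operator acting on $\RR^n$, this becomes $FP(X) = \tr(S^2) = \sum_{k=1}^n \lambda_k^2$, where $\lambda_1,\ldots,\lambda_n$ are the eigenvalues of $S$ (counted with multiplicity). Note that $S$ is positive semidefinite, so all $\lambda_k \geq 0$.

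Second, I would compute $\tr(S)$ directly as $\tr(XX^*) = \tr(X^*X) = \sum_{i=1}^m \|x_i\|^2 = m$, using that each $x_i$ is a unit vector. Hence $\sum_{k=1}^n \lambda_k = m$.

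The key inequality is then Cauchy--Schwarz applied to $(\lambda_1,\ldots,\lambda_n)$ and $(1,\ldots,1)$:
\[m^2 = \left(\sum_{k=1}^n \lambda_k\right)^2 \leq n \sum_{k=1}^n \lambda_k^2 = n\cdot FP(X),\]
which gives $FP(X) \geq m^2/n$. Equality in Cauchy--Schwarz holds iff all $\lambda_k$ are equal, i.e., $\lambda_k = m/n$ for all $k$, which means $S = (m/n) I$; this is precisely the condition for $X$ to be a tight frame (with frame constant $m/n$). The assumption $m \geq n$ is used implicitly to ensure that the tight frame bound $m/n$ is consistent with $X$ being a spanning set; no further argument is required.

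I do not anticipate a serious obstacle here; the only mildly delicate point is the identity $\tr((X^*X)^2) = \tr((XX^*)^2)$, which follows from the general fact $\tr(AB) = \tr(BA)$ applied to $A = X^*X$ with itself (or equivalently from the singular value decomposition of $X$, noting that $X^*X$ and $XX^*$ share the same nonzero eigenvalues).
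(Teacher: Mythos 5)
The paper does not prove this statement; it is quoted as a known result with a citation to Benedetto--Fickus, so there is no internal proof to compare against. Your argument --- writing $FP(X)=\tr(G^2)=\tr(S^2)=\sum_{k=1}^n\lambda_k^2$, noting $\tr(S)=\sum_{i=1}^m\|x_i\|^2=m$, and applying Cauchy--Schwarz to $(\lambda_1,\dots,\lambda_n)$ and $(1,\dots,1)$ with equality iff $S=(m/n)I$ --- is the standard proof of this bound, and it is complete and correct; each step (the identification $FP(X)=\tr(G^2)$, the cyclicity of the trace, positive semidefiniteness of $S$) checks out. One minor clarification: the inequality itself holds even when $m<n$ by the same computation; the hypothesis $m\ge n$ is only needed so that the equality case is attainable, since for $m<n$ the operator $S$ has a zero eigenvalue while $\tr(S)=m>0$, so the eigenvalues can never all be equal.
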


If $X$ is a spherical two-distance set in $\RR^n$ and is also a tight frame for $\RR^n$, then we call $X$ a {\it two-distance tight frame}. Moreover, if in addition the angle set of $X$ is $\{\alpha, -\alpha\}$, then $X$ is called an {\it equiangular tight frame} or an ETF for short. 

An immediate consequence of Theorem \ref{potential} for the case of regular two-distance sets is as follows.
\begin{corollary}\label{co1}
	Let $X$ be a regular two-distance set of $m$ vectors in $\RR^n$ at angles $\alpha, \beta$ with respective multiplicities $k_\alpha, k_\beta$. Then
	\[1+k_\alpha \alpha^2+k_\beta\beta^2\geq \frac{m}{n},\]
	with equality if and only if $X$ is a two-distance tight frame.
\end{corollary}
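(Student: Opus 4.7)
The plan is to derive this corollary as a direct application of the frame potential bound in Theorem \ref{potential}. The key observation is that regularity converts the double sum defining $FP(X)$ into a clean expression, after which the inequality is immediate.

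First, I would compute $FP(X)$ explicitly using the regular two-distance structure. Fix any $i \in [m]$. Since $\langle x_i, x_i\rangle = 1$, and by regularity exactly $k_\alpha$ of the indices $j \neq i$ satisfy $\langle x_i, x_j\rangle = \alpha$ and exactly $k_\beta$ of them satisfy $\langle x_i, x_j\rangle = \beta$, we get
\[
\sum_{j=1}^{m} |\langle x_i, x_j\rangle|^2 = 1 + k_\alpha \alpha^2 + k_\beta \beta^2,
\]
and this quantity does not depend on $i$. Summing over $i$ yields
\[
FP(X) = m\bigl(1 + k_\alpha \alpha^2 + k_\beta \beta^2\bigr).
\]

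Next, I would invoke Theorem \ref{potential}, which gives $FP(X) \geq m^2/n$ for any set of $m \geq n$ unit vectors in $\RR^n$, with equality exactly when $X$ is a tight frame. Dividing both sides of $m(1 + k_\alpha \alpha^2 + k_\beta \beta^2) \geq m^2/n$ by $m > 0$ produces the desired inequality, and the equality characterization transfers verbatim. (Note that the spanning condition $m \geq n$ needed for Theorem \ref{potential} is implicit here; if $m < n$ the inequality still holds trivially since both sides are positive and the regular two-distance set cannot be tight in that regime.)

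There is no real obstacle: the entire argument is a rewriting of the frame potential using regularity, followed by one application of the already-stated bound. The only thing to be careful about is making sure the row-sum computation uses $k_\alpha$ and $k_\beta$ in the correct way, which is guaranteed precisely by the definition of regularity and by the fact that the diagonal contributes $1$ to the sum (not $\alpha$ or $\beta$).
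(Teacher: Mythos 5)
Your proof is correct and is exactly the argument the paper intends: the paper presents this corollary as an ``immediate consequence'' of Theorem \ref{potential}, and the intended route is precisely your computation $FP(X)=m\bigl(1+k_\alpha\alpha^2+k_\beta\beta^2\bigr)$ via regularity followed by the frame potential bound. Your side remark about $m<n$ is also sound (the left-hand side is at least $1>m/n$ and such a set cannot span, so equality is impossible), so nothing is missing.
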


\begin{definition}
	Given two frames $X=\{x_i\}_{i=1}^m$ and $Y=\{y_i\}_{i=1}^m$ for $\RR^n$. $X$ and $Y$ are said to be unitarily equivalent if there exists an unitary operator $U$ on $\RR^n$ such that $y_i=Ux_i$, for all $i\in [m]$. 
\end{definition}
It is known that two frames $X$ and $Y$ are unitarily equivalent if and only if their Gram matrices are equal.

Follow by the book \cite{W}, we now define a balanced set of vectors.
\begin{definition}
	A set of vectors $\{x_i\}_{i=1}^m$ in $\RR^n$ is said to be balanced if $\sum_{i=1}^mx_i=0.$
\end{definition}

A simple characterization of balanced sets is as follows.
\begin{proposition}\label{pro1}
	A set $X=\{x_i\}_{i=1}^m$ is balanced if and only if each row of its Gram matrix sums to zero.
\end{proposition}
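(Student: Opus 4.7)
The plan is to unpack the row sum condition as an inner product with the sum vector $v := \sum_{j=1}^m x_j$, and then exploit the observation that the defining orthogonality with every $x_i$ forces $v$ to be orthogonal to itself.

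First I would handle the easy direction. If $X$ is balanced, i.e.\ $v = \sum_{j=1}^m x_j = 0$, then for every $i \in [m]$ the $i$-th row sum of the Gram matrix is
\[
\sum_{j=1}^{m} G_{ij} = \sum_{j=1}^{m} \langle x_i, x_j\rangle = \Bigl\langle x_i, \sum_{j=1}^{m} x_j\Bigr\rangle = \langle x_i, 0 \rangle = 0,
\]
by linearity of the inner product in the second slot.

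For the converse, I would run essentially the same identity in reverse and then use positive-definiteness of the inner product on $\RR^n$. Suppose each row of $G$ sums to zero. Linearity gives
\[
0 = \sum_{j=1}^{m} \langle x_i, x_j \rangle = \Bigl\langle x_i, \sum_{j=1}^{m} x_j \Bigr\rangle = \langle x_i, v \rangle \quad \text{for every } i \in [m].
\]
Taking a further linear combination with coefficients $1$, we get
\[
\langle v, v \rangle = \Bigl\langle \sum_{i=1}^{m} x_i,\, v \Bigr\rangle = \sum_{i=1}^{m} \langle x_i, v \rangle = 0,
\]
whence $v = 0$, i.e.\ $X$ is balanced.

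There is really no obstacle here: the statement is a one-line consequence of linearity and the fact that $\langle v, v \rangle = 0$ implies $v = 0$ in a Euclidean space. The only thing worth writing down clearly is the trick in the converse direction of summing the already-derived orthogonality relations $\langle x_i, v \rangle = 0$ over $i$ to recover $\|v\|^2 = 0$.
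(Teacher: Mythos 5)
Your proof is correct and follows essentially the same argument as the paper: both establish the nontrivial direction by showing $\bigl\|\sum_{i=1}^m x_i\bigr\|^2 = \sum_{i,j}\langle x_i, x_j\rangle = 0$, the sum of all Gram matrix entries, which vanishes when every row sums to zero. Your write-up just breaks that double sum into two explicit steps and also spells out the (easy) converse that the paper dismisses as obvious.
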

\begin{proof}
	Suppose that for each $i$, $\sum_{j=1}^{m}\langle x_i, x_j\rangle=0$. Then we have
	\[\|\sum_{i=1}^{m}x_i\|^2=\langle \sum_{i=1}^{m}x_i, \sum_{j=1}^{m}x_j\rangle=\sum_{i,j=1}^{m}\langle x_i, x_j\rangle=0.\]
	So $X$ is balanced.
	The converse is obvious. 
\end{proof}

Given a dimension $n$, by maximal spherical two-distance sets (similarly maximal ETFs, maximal equiangualar lines) we mean the largest cardinality of such sets which can exist in $\RR^n$. 

The outline of this paper is as follows. In section 2, we present various properties of regular two-distance sets, in particular two-distance tight frames. In Section 3, we will then give several constructions of such sets, focusing on constructing regular two-distance frames with large cardinality. We conclude in Section 4 by discussing a connection between spherical two-distance sets and equiangular lines.  Several examples of the existence/non-existence of maximal equiangular lines and quasi-symmetric designs are also given. 
\section{Properties of regular two-distance sets}
In this section, we will present some properties of regular two-distance sets. In particular, we give sharp upper bounds on the maximum size of regular two-distance sets when both angles are positive or negative. We also show that if a regular two-distance set has a large cardinality, then it must be balanced.  Various properties for the special case where two-distance sets form tight frames for the space are also discussed.

We have mentioned that an upper bound for the maximum size of spherical two-distance sets of distinct vectors in $\RR^n$ is $\frac{n(n+3)}{2}$. If $n>2$, then this bound can be achieved only if $n=(2k+1)^2-3$ for $k\in \mathbb{N}$. For other dimensions, the bound is reduced to $\frac{n(n+1)}{2}$ with an exception for the case $n=5$, where the maximum size is $16$. We will now see that these upper bounds can be improved if both angles are either positive or negative.

We first consider the case where both angles are negative. Although the result can be deduced from the Rankin bound on the maximum number of spherical caps, see \cite{R}, also in \cite{CHS}, we will give a direct proof for this case below. Actually, the following theorem will give a sharp upper bound for cardinalities of sets that have negative/non-positive inner products between the vectors. 
\begin{theorem}\label{nega angles}
		Let $\{x_i\}_{i=1}^m$ be any set of non-zero vectors in $\RR^n$.
		\begin{enumerate}
			\item If $\langle x_i,x_j\rangle <0$  for all $i\not= j$,
			then $m\le n+1$.
			\item If $\langle x_i,x_j\rangle \leq0$  for all $i\not= j$,
			then $m\le 2n$.
		\end{enumerate} 
	
\end{theorem}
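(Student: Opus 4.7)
The plan is to prove both statements by induction on $n$, with the key technique being orthogonal projection onto the complement of a fixed vector and a careful check that the projections remain nonzero and retain the sign condition on inner products.

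For part (1), I would proceed as follows. The base case $n=1$ is immediate: two nonzero reals have strictly negative product only when one is positive and one is negative, so a third would force a positive product with one of them. For the inductive step, fix $x_m$ and define the projections $y_i = x_i - \frac{\langle x_i, x_m\rangle}{\|x_m\|^2}x_m$ for $i<m$, which lie in the $(n-1)$-dimensional space $x_m^\perp$. A direct expansion gives
\[
\langle y_i, y_j\rangle = \langle x_i, x_j\rangle - \frac{\langle x_i, x_m\rangle\langle x_j, x_m\rangle}{\|x_m\|^2}.
\]
By hypothesis $\langle x_i, x_j\rangle < 0$, while $\langle x_i, x_m\rangle$ and $\langle x_j, x_m\rangle$ are both negative so their product is positive; hence $\langle y_i, y_j\rangle < 0$. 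Each $y_i$ is nonzero, since if $x_i$ were a scalar multiple of $x_m$ the sign requirement on $\langle x_i, x_m\rangle$ would force a negative scalar, and then for any third $x_j$ the inner products $\langle x_j, x_i\rangle$ and $\langle x_j, x_m\rangle$ would have opposite signs, contradicting (1). Induction then yields $m-1 \leq n$.

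For part (2), the dichotomy between antiparallel and orthogonal pairs is the organizing principle. Base case $n=1$ is trivial ($m\leq 2$). For the inductive step, first suppose some pair is antiparallel, say $x_1 = -c\,x_2$ with $c>0$. Then for any other $x_k$, the inequalities $\langle x_k, x_1\rangle \leq 0$ and $\langle x_k, x_2\rangle = -c^{-1}\langle x_k, x_1\rangle \leq 0$ together force $\langle x_k, x_1\rangle = 0$. Consequently $\{x_3,\dots,x_m\}$ sits in the $(n-1)$-dimensional space $x_1^\perp$, and induction gives $m-2 \leq 2(n-1)$.

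If no pair is antiparallel, split once more: if all inner products are strictly negative, part (1) gives $m \leq n+1 \leq 2n$; otherwise pick an orthogonal pair $\langle x_1, x_2\rangle = 0$ and project $\{x_2, \dots, x_m\}$ onto $x_1^\perp$ exactly as in (1). The same computation shows the projections have pairwise non-positive inner products, and they are all nonzero precisely because no $x_i$ is a (negative) scalar multiple of $x_1$, ruled out by the no-antiparallel-pair assumption. Induction in $x_1^\perp \cong \RR^{n-1}$ then yields $m-1 \leq 2(n-1)$, so $m \leq 2n-1$. The main obstacle is sub-case bookkeeping in (2): one must verify exhaustiveness of the antiparallel/orthogonal/strict dichotomy and check nonzeroness of projections in each branch --- the strict case of (1) alone is not strong enough, which is precisely why the extra structural step in (2) is needed.
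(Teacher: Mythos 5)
Your proof is correct and takes essentially the same route as the paper: induction on the dimension, projecting all but one vector onto the hyperplane orthogonal to a chosen vector and checking that the sign conditions on the inner products are preserved. Your part (2) replaces the paper's single observation that at most one projection can vanish (so one loses at most two vectors per dimension step) with an explicit antiparallel/orthogonal/strictly-negative trichotomy, and in part (1) you are in fact more careful than the paper in verifying that the projected vectors are non-zero; both versions of the bookkeeping yield the same bounds.
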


	\begin{proof} (1): 
		Let us do this by induction on dimension $n$.  For $n=2$ the largest set of vectors with negative angles is 3.  So assume the result is true for $n$ and consider $n+1$.  
		
		Without loss of generality, we can assume that $x_1$ is unit norm. Let $P$ be the orthogonal projection onto $\spn\{x_1\}$. Then for all $x\in \mathbb{R}^{n+1}$, 
		\[Px=\langle x, x_1\rangle x_1.\] Let consider the set of vectors $\{(I-P)x_i\}_{i=2}^m$. Clearly, this set lies on the hyperplane $x_1^\perp$. Moreover, for any $i\not=j$, we have
		\[\langle (I-P)x_i, (I-P)x_j\rangle=\langle x_i, x_j\rangle-\langle Px_i, Px_j\rangle.\]
		Since 
		\[\langle Px_i, Px_j\rangle=\langle\langle x_i, x_1\rangle x_1, \langle x_j, x_1\rangle x_1\rangle=\langle x_i, x_1\rangle \langle x_j, x_1\rangle>0,\]
		it follows that 
		\[\langle (I-P)x_i, (I-P)x_j\rangle<0 \mbox{ for all } i\not=j.\]
		By the induction hypothesis we must have $m-1\leq n+1$. So $m\leq n+2$.
		
		(2): We observe that the for $n=2$, the largest set of non-zero vectors with non-positive inner products is 4. Repeating the proof as in (1), noting that the set $\{(I-P)x_i\}_{i=2}^m$ contains at most one zero vector, we get the desired claim.
	\end{proof}

Now we will give a method to construct balanced, regular two-distance sets in one lower dimension from non-balanced ones. As consequences, we will get conditions for regular two-distance sets to be balanced as well as the upper bounds on the maximum size of the sets where both angles are non-negative.  

 \begin{theorem}\label{thm2}
 	Let $X=\{x_i\}_{i=1}^m$ be a regular two-distance set of distinct vectors in  $\mathbb{R}^n$ with its Grammian constant $c$. Let $\alpha$ and $\beta$ be its angles with multiplicities $k_\alpha$ and $k_\beta$, respectively. Assume that $X$ is not balanced and let $P$ be the orthogonal projection onto $\spn\{z\}$, where $z=\sum_{i=1}^{m}x_i$. Then $Y=\left\{\frac{(I-P)x_i}{\|(I-P)x_i\|}\right\}_{i=1}^m$ is a regular two-distance set of distinct vectors in $\mathbb{R}^{n-1}$ at angles $\frac{m}{m-c}\left(\alpha-\frac{c}{m}\right)$ and $\frac{m}{m-c}\left(\beta-\frac{c}{m}\right)$ with respective multiplicities $k_\alpha$ and $k_\beta$. Moreover, this set is balanced.
 \end{theorem}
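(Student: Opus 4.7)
The plan is to exploit the fact that regularity, via Theorem~\ref{thm1}, makes the Gram matrix have constant row sum $c$, and then everything follows from a direct computation. First I would observe that $\langle x_i, z\rangle = \sum_{j=1}^m \langle x_i, x_j\rangle = c$ for every $i$, so $\|z\|^2 = \sum_i \langle x_i, z\rangle = mc$. Since $X$ is not balanced we have $z\neq 0$, forcing $c>0$; this also guarantees $m-c\neq 0$ later. The crucial structural point is that
\[
Px_i \;=\; \frac{\langle x_i,z\rangle}{\|z\|^2}\,z \;=\; \frac{c}{mc}\,z \;=\; \frac{z}{m},
\]
which is the \emph{same} vector for all $i$. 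This uniformity is what makes the normalized projections form a regular two-distance set again.

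Next I would compute, for each $i$, the squared norm
\[
\|(I-P)x_i\|^2 = 1 - \tfrac{2c}{m} + \tfrac{\|z\|^2}{m^2} = \tfrac{m-c}{m},
\]
which is independent of $i$, and for $i\neq j$,
\[
\langle (I-P)x_i,(I-P)x_j\rangle = \langle x_i,x_j\rangle - \tfrac{c}{m}.
\]
Dividing by the common norm squared $\tfrac{m-c}{m}$ shows that the inner product between two vectors of $Y$ is either $\tfrac{m}{m-c}(\alpha-\tfrac{c}{m})$ or $\tfrac{m}{m-c}(\beta-\tfrac{c}{m})$, with the two cases occurring exactly when $\langle x_i,x_j\rangle$ equals $\alpha$ or $\beta$, respectively. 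Hence the multiplicities are preserved: each $y_i$ has exactly $k_\alpha$ neighbors at one angle and $k_\beta$ at the other.

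To finish, I would note three things. (i) The vectors $(I-P)x_i$ lie in the hyperplane $z^\perp$, which is isometric to $\RR^{n-1}$. (ii) Distinctness is automatic since $(I-P)x_i = x_i - z/m$ is just a translation of the $x_i$. (iii) Balancedness is immediate:
\[
\sum_{i=1}^m (I-P)x_i \;=\; z - m\cdot\tfrac{z}{m} \;=\; 0,
\]
and because the normalizing factor $\sqrt{m/(m-c)}$ is the same for every $i$, the sum of the normalized vectors is still zero.

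There is no real obstacle; the argument is essentially an algebraic identity powered by the observation that $Px_i$ is constant in $i$. The only spot calling for care is the verification that $c>0$ so that the angles and norms in the conclusion are well-defined, and this is supplied at no cost by the unbalanced hypothesis together with $\|z\|^2 = mc$.
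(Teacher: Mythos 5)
Your proof is correct and follows essentially the same route as the paper: compute $\|z\|^2=mc$, project onto $z^\perp$, and verify the norms, inner products, and row sums directly. Your observation that $Px_i=z/m$ is the same vector for every $i$ is a nice way to package the computation (it makes distinctness and balancedness immediate), but the underlying argument is identical to the paper's.
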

 \begin{proof}
 For every $x\in \mathbb{R}^n,$ we have that
 	\[Px=\left\langle x, \dfrac{z}{\|z\|}\right\rangle\dfrac{z}{\|z\|}.\]

 	Now we compute
 	\[\|z\|^2=\langle \sum_{i=1}^{m}x_i, \sum_{j=1}^{m}x_j\rangle=mc,\]
 	and for all $i$,
 	\[\|(I-P)x_i\|^2=\|x_i\|^2-\|Px_i\|^2=1-\dfrac{1}{\|z\|^2}|\langle x_i, z\rangle|^2=1-\dfrac{c^2}{mc}=\dfrac{m-c}{m}.\]
 	Set $y_i=\frac{(I-P)x_i}{\|(I-P)x_i\|}$, we have
 	\begin{align*}
 	\langle y_i, y_j\rangle&=\dfrac{m}{m-c}\left(\langle x_i, x_j\rangle-\langle Px_i, Px_j\rangle\right)\\
 	&=\dfrac{m}{m-c}\left(\langle x_i, x_j\rangle-\frac{1}{\|z\|^2}\langle x_i, z\rangle\langle x_j, z\rangle\right)\\
 	&=\dfrac{m}{m-c}\left(\langle x_i, x_j\rangle-\frac{c^2}{\|z\|^2}\right)\\
 	&=\dfrac{m}{m-c}\left(\langle x_i, x_j\rangle-\frac{c}{m}\right).\\
 	\end{align*}
 	This implies that $Y$ is a two-distance set. Since $X$ is regular, it follows that $Y$ is regular with the same multiplicities as of $X$.
 	The vectors $y_i$'s are distinct since 	
 	\[\dfrac{m}{m-c}\left(\langle x_i, x_j\rangle-\frac{c}{m}\right)=1 \mbox{ if and only if } \langle x_i, x_j\rangle=1.\]
 	To show that $Y$ is balanced, we compute its Grammian constant. For any $i$, we have
 	\[\sum_{j=1}^{m}\langle y_i, y_j\rangle=\frac{m}{m-c}\left(\sum_{j=1}^{m}\langle x_i, x_j\rangle-c\right)=0,\] which is the claim.
 \end{proof}

  The orthogonal projection of a frame is also a frame for the range space with the same bounds. Hence, the following result is obvious.
  \begin{corollary}
  	If $X=\{x_i\}_{i=1}^m$ is a regular, two-distance tight frame for $\RR^n$ such that $X$ is not balanced, then the set $Y=\{y_i\}_{i=1}^m$ constructed in Theorem \ref{thm2} is a balanced, regular, two-distance tight frame for $\mathbb{R}^{n-1}$. 
  \end{corollary}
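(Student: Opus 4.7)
The plan is to reduce the statement to the one thing that Theorem \ref{thm2} has not already done, namely tightness of $Y$. Theorem \ref{thm2} hands us for free that $Y$ is a balanced, regular, two-distance set of distinct unit vectors with the stated angles and multiplicities, so nothing else about the structure of $Y$ needs to be revisited. The natural model for $\mathbb{R}^{n-1}$ will be $\spn\{z\}^\perp$, and this is genuinely $(n-1)$-dimensional because the hypothesis that $X$ is not balanced forces $z = \sum_i x_i \neq 0$.

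For tightness I would work directly with frame operators. Since $X$ is $(m/n)$-tight for $\RR^n$, we have $\sum_i x_i x_i^* = \tfrac{m}{n}I_n$. Sandwiching both sides by $I-P$ yields
\[\sum_{i=1}^m (I-P)x_i\bigl((I-P)x_i\bigr)^* = \frac{m}{n}(I-P),\]
which shows that $\{(I-P)x_i\}_{i=1}^m$ is $(m/n)$-tight on the range of $I-P$, i.e.\ on $\spn\{z\}^\perp$. This is the remark preceding the corollary (``the orthogonal projection of a frame is a frame on the range with the same bounds'') made concrete.

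Theorem \ref{thm2} computes that every $(I-P)x_i$ has the same norm $\sqrt{(m-c)/m}$, so $y_i$ is obtained from $(I-P)x_i$ by the common scalar factor $\sqrt{m/(m-c)}$. Consequently the frame operator of $Y$ is $\tfrac{m^2}{n(m-c)}(I-P)$, a positive multiple of the identity on $\spn\{z\}^\perp \cong \RR^{n-1}$, which is the definition of a tight frame. There is no serious obstacle; the only sanity check worth doing is that this constant matches the value $m/(n-1)$ forced on any unit-norm tight frame of $m$ vectors in $\RR^{n-1}$. That reduces to showing $c = m/n$, which holds because $\mathbf{1}$ is an eigenvector of the Gram matrix with eigenvalue $c$, and since $X$ is tight with $c \neq 0$ (non-balanced), $c$ must coincide with the unique nonzero eigenvalue $m/n$ of the frame operator.
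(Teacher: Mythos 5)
Your proposal is correct and follows essentially the same route as the paper: the paper simply invokes the fact that the orthogonal projection of a tight frame is a tight frame for the range with the same bound and declares the corollary obvious, while you make that one-liner concrete by conjugating $\sum_i x_ix_i^* = \tfrac{m}{n}I$ with $I-P$ and absorbing the common normalization $\sqrt{m/(m-c)}$. Your closing consistency check that $c=m/n$ (via $\mathbf{1}$ being an eigenvector of the Gram matrix and $c\neq 0$ by non-balancedness) is a nice extra confirmation consistent with Proposition \ref{pro2}, but not needed for the argument.
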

  \begin{theorem}\label{thm3}
  	Let $X=\{x_i\}_{i=1}^m$ be a regular two-distance set of distinct vectors in $\mathbb{R}^n, n\geq 7$. Then $X$ is balanced if $m> \frac{(n-1)n}{2}$ and $n\not=(2k+1)^2-2$, $k\in \mathbb{N}$. For the case $n=(2k+1)^2-2$, for some $k\in \mathbb{N}$, we need the condition $m>\frac{(n-1)(n+2)}{2}$ in order for $X$ to be balanced.  Moreover, these sets must have one positive angle and one negative angle.
  \end{theorem}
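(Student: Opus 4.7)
The plan is to argue by contradiction, using Theorem~\ref{thm2} to reduce to the balanced case in dimension $n-1$, and then applying the known bounds on the maximum size $g(n-1)$ of a spherical two-distance set of distinct vectors cited in the introduction.

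First I would suppose $X$ is not balanced. Applying Theorem~\ref{thm2} (projecting onto $z^\perp$, where $z=\sum_i x_i$, and normalizing) then produces a balanced regular two-distance set $Y$ of $m$ \emph{distinct} unit vectors lying in the hyperplane $z^\perp\cong\mathbb{R}^{n-1}$. In particular $Y$ is a spherical two-distance set of distinct vectors in dimension $n-1$, so $m\le g(n-1)$. I would then invoke the bounds recalled in the introduction: for $n-1\ge 6$ with $n-1\notin\{(2k+1)^2-3:k\in\mathbb N\}$, i.e.\ $n\ne(2k+1)^2-2$, one has $g(n-1)\le\frac{(n-1)n}{2}$; in the exceptional dimensions $n-1=(2k+1)^2-3$ only the harmonic bound $g(n-1)\le\frac{(n-1)(n+2)}{2}$ is available. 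Each of these contradicts the respective hypothesis on $m$, so $X$ must be balanced.

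For the assertion about the signs of the angles, once $X$ is balanced its Grammian constant is $c=0$, so the row-sum identity from the proof of Theorem~\ref{thm1} combined with Proposition~\ref{pro1} gives
\[
1+k_\alpha\alpha+k_\beta\beta=0.
\]
This immediately excludes $\alpha,\beta\ge 0$, since then the left-hand side would be at least $1$. On the other hand, if $\alpha,\beta\le 0$, then every off-diagonal entry of the Gram matrix is non-positive, and Theorem~\ref{nega angles}(2) yields $m\le 2n$; but for $n\ge 7$ one has $2n<\frac{(n-1)n}{2}$, again contradicting the lower bound on $m$. Hence $\alpha$ and $\beta$ must have strictly opposite signs.

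The main obstacle is not a deep mathematical difficulty but rather careful bookkeeping: one has to match the family of exceptional dimensions $n-1=(2k+1)^2-3$, which governs the available bound on $g(n-1)$, with the exceptional case $n=(2k+1)^2-2$ appearing in the hypothesis, and in particular to handle the boundary case $n=7$ correctly (there $n-1=6$ is itself exceptional and $g(6)=27=\frac{(n-1)(n+2)}{2}$, which is why the threshold $m>\frac{(n-1)(n+2)}{2}$ is needed rather than $m>\frac{(n-1)n}{2}$). Once this bookkeeping is in place, the theorem follows from combining Theorem~\ref{thm2}, the quoted values of $g$, and Theorem~\ref{nega angles}.
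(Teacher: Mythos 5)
Your proposal is correct and follows essentially the same route as the paper: assume $X$ is not balanced, use Theorem~\ref{thm2} to produce a regular two-distance set of $m$ distinct vectors in $\mathbb{R}^{n-1}$, contradict the bounds \eqref{eneq1}--\eqref{eneq2} applied in dimension $n-1$, and then rule out two non-negative angles via the zero row sum and two non-positive angles via Theorem~\ref{nega angles}. Your bookkeeping of the exceptional dimensions $n-1=(2k+1)^2-3$ and the boundary case $n=7$ is consistent with what the paper does implicitly.
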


  \begin{proof}
  	We have seen that the upper bound for the number of vectors of any two-distance set in $\RR^n$ is $m\leq\frac{n(n+3)}{2}$ by the Harmonic bound \eqref{eneq1}, and if $n\not=(2k+1)^2-3$, $k\in \mathbb{N}$, then $m\leq\frac{n(n+1)}{2}$ by \eqref{eneq2}. Therefore, if $X$ is not balanced and the number of vectors $m$ satisfies the condition of the theorem, then by Theorem \ref{thm2}, we can construct another two-distance set of $m$ distinct vectors in $\mathbb{R}^{n-1}$. But this set has the number of vectors greater than the upper bound above, which cannot happen.
  	
  	For the ``moreover'' part, we observe that if $X$ has two non-negative angles, then it cannot be balanced. Note also that by Theorem \ref{nega angles}, $X$ cannot have both non-positive angles. This completes the proof.
  \end{proof}
  
  Theorem \ref{thm3} gives the upper bound for the cardinalities of non-balanced, regular two-distance sets, in particular for such sets with two non-negative angles.
  \begin{corollary}\label{posi angles}
  	Let $X=\{x_i\}_{i=1}^m$ be a regular two-distance set of distinct vectors in $\mathbb{R}^n$, $n\geq 7$ with both non-negative angles. Then we have the following:
  	\begin{enumerate}
  		\item If $n\not=(2k+1)^2-2$ for all $k\in \mathbb{N}$, then $m\leq \frac{(n-1)n}{2}$.
  		\item If $n=(2k+1)^2-2$ for some $k\in \mathbb{N}$, then $m\leq \frac{(n-1)(n+2)}{2}$.
  	\end{enumerate}
  \end{corollary}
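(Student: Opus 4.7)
The plan is to deduce this from Theorem \ref{thm3} via its contrapositive, after establishing the key observation that a regular two-distance set whose angles $\alpha,\beta$ are both non-negative cannot be balanced.

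First, I would verify non-balancedness directly. By Proposition \ref{pro1}, $X$ is balanced if and only if every row of its Gram matrix sums to zero; equivalently, the Grammian constant $c$ of $X$ equals zero. For a regular two-distance set with multiplicities $k_\alpha, k_\beta$, the Grammian constant is
\[
c = 1 + k_\alpha\alpha + k_\beta\beta.
\]
When $\alpha,\beta\geq 0$ and the $x_i$ are unit vectors, this expression is strictly positive (the leading $1$ alone precludes vanishing), so $c>0$ and $X$ is not balanced. Alternatively, one can argue directly that $\|\sum_i x_i\|^2 = m + \sum_{i\neq j}\langle x_i,x_j\rangle \geq m > 0$ since all off-diagonal inner products are non-negative.

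With non-balancedness in hand, I would invoke the contrapositive of Theorem \ref{thm3}. That theorem says: for $n\geq 7$, if $m>\frac{(n-1)n}{2}$ and $n\ne (2k+1)^2-2$, then $X$ must be balanced; and if $n=(2k+1)^2-2$ and $m>\frac{(n-1)(n+2)}{2}$, then $X$ must be balanced. Since our $X$ cannot be balanced when both angles are non-negative, the cardinality $m$ must respect the corresponding inequality, yielding exactly the two bounds claimed in (1) and (2).

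There is essentially no obstacle; the corollary is almost a direct reformulation of the ``moreover'' clause of Theorem \ref{thm3}. The only thing worth stating carefully is the non-balancedness step (either via the Grammian constant or via $\|\sum_i x_i\|^2$), after which case (1) and case (2) follow from the two regimes in Theorem \ref{thm3} with no further work.
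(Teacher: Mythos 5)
Your proof is correct and matches the paper's (implicit) argument exactly: the paper derives this corollary as the contrapositive of Theorem \ref{thm3}, using precisely the observation from that theorem's proof that a set with two non-negative angles cannot be balanced. Your explicit verification of non-balancedness via $c = 1 + k_\alpha\alpha + k_\beta\beta \geq 1 > 0$ (or via $\|\sum_i x_i\|^2 \geq m$) is a welcome bit of added care, but the route is the same.
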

  Note that the conditions on the number of vectors for the sets to be balanced in Theorem \ref{thm3} cannot not be lowered. In other words, the bounds on Corollary \ref{posi angles} are sharp. We will see these by examples in Section 3. Later, we also see that the properties for angles in Theorem \ref{thm3} hold true for two-distance tight frames of any size.

  We now interested in the case where two-distance sets form frames for the space, especially tight frames. The following result shows that we can get frames if the cardinalities of two-distance sets are large.

 \begin{proposition}\label{frame}
 	Let $X$ be a two-distance set of $m$ distinct vectors in $\RR^n, n\geq 7$. Then $X$ is a frame for $\RR^n$ if one of the following conditions hold:
 	\begin{enumerate}
 		\item $m>\frac{(n-1)n}{2}$ and $n\not=(2k+1)^2-2$ for all $k\in \mathbb{N}$.
 		\item $m>\frac{(n-1)(n+2)}{2}$ and $n=(2k+1)^2-2$ for some $k\in \mathbb{N}$.
 	\end{enumerate}
 \end{proposition}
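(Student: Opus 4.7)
The plan is to argue by the contrapositive of the spanning characterization of frames: the family $X = \{x_i\}_{i=1}^m$ is a frame for $\RR^n$ if and only if $\spn(X) = \RR^n$. So I will assume $X$ does not span $\RR^n$ and show that this contradicts the cardinality hypothesis in each of cases (1) and (2).

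Under the non-spanning assumption, $\spn(X)$ has dimension at most $n-1$, and choosing any linear isometry from $\spn(X)$ into $\RR^{n-1}$ turns $X$ into a two-distance set of $m$ distinct vectors in $\RR^{n-1}$. I will then feed this into the upper bounds on $g(n-1)$ recalled in the Introduction, namely the harmonic bound \eqref{eneq1} and the Glazyrin--Yu evaluation \eqref{eneq2}, and compare with the hypothesized lower bounds on $m$.

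In case (1), the hypothesis $n \neq (2k+1)^2 - 2$ excludes $n = 7 = 3^2 - 2$, so combined with $n \geq 7$ we get $n - 1 \geq 7$. Furthermore $n - 1 \neq (2k+1)^2 - 3$, so \eqref{eneq2} applies in dimension $n - 1$ and gives $m \leq g(n-1) = \frac{(n-1)n}{2}$, contradicting $m > \frac{(n-1)n}{2}$. In case (2), we have $n - 1 = (2k+1)^2 - 3$, so the harmonic bound \eqref{eneq1} gives $m \leq g(n-1) \leq \frac{(n-1)((n-1)+3)}{2} = \frac{(n-1)(n+2)}{2}$, contradicting $m > \frac{(n-1)(n+2)}{2}$. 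The only $k$ for which this forces a reference to a dimension below $7$ is $k = 1$, corresponding to $n = 7$ and $n - 1 = 6$, and there the bound we just invoked matches the known value $g(6) = 27$ recalled in the Introduction, so no separate low-dimensional argument is required.

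The argument is essentially a bookkeeping application of the bounds already collected in the Introduction; the one place to be careful is that the Glazyrin--Yu bound \eqref{eneq2} is stated only for dimension at least $7$, which is why case (1) must exclude $n = 7$ and the boundary value $n = 7$ must be absorbed into case (2) via $g(6) = 27$. Apart from checking that this dovetailing is clean, I do not expect any real obstacle.
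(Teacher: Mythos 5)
Your proposal is correct and follows essentially the same route as the paper's own proof: assume $X$ does not span $\RR^n$, view it as a two-distance set in a space of dimension at most $n-1$, and derive a contradiction from the harmonic bound \eqref{eneq1} and the Glazyrin--Yu evaluation \eqref{eneq2}. Your treatment is in fact slightly more careful than the paper's, since you explicitly verify that case (1) excludes $n=7$ (so that \eqref{eneq2} legitimately applies in dimension $n-1\geq 7$) and that the boundary value $n=7$ in case (2) is covered by $g(6)=27$.
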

 
  \begin{proof}
  	We will give a proof for (1). A proof for (2) is similar. Suppose by way of  contradiction that $X$ is not a frame for $\RR^n$. Hence $X$ does not span $\RR^n$. Therefore $X$ is a two-distance set for a subspace of dimension at most $n-1$. Since $n-1\not=(2k+1)^2-3$, it follows that the cardinality of $X$ is at most $\frac{(n-1)n}{2}$, which cannot happen by condition (1).
  \end{proof}
  \begin{remark}
  	 	\begin{enumerate}   			
		\item With the same arguments, it is simple to get similar results as in Theorem \ref{thm3}, Corollary \ref{posi angles}, and Proposition \ref{frame} for dimensions less than 7.
  		\item We should point out that a similar result to Proposition \ref{frame} for maximal equiangular lines is not true, i.e., given a set of maximal equiangular lines in $\RR^n$, let $X$ be a collection of vectors spanning each line, then $X$ may not span the space. A simple counterexample is that the maximal number of equiangular lines in $\RR^4$ is 6 and we can use the 6 lines in the subspace $\RR^3$ to get them.
  	\end{enumerate}	
  \end{remark}
  
  Tight frames has been shown to be very useful for many applications since they have both redundant and basis-like properties. In the language of design theory, a balanced tight frame is call a 2-design, see \cite{BGOY, DJS}.
  In the following, we will give a characterization of two-distance tight frames. As a consequence, every two-distance tight frame with angles $\alpha\not=-\beta$ is always regular.
  \begin{theorem}\label{charac tight} 
  	Let $X=\{x_i\}_{i=1}^m$ be a two-distance frame at angles $\alpha$ and $\beta$.  
  	The following are equivalent:
  	\begin{enumerate}
  		\item $X$ is a $m/n$-tight frame.
  		\item For some $\mathcal{J}\subset [m]$ with $\spn\{x_i\}_{i\in \mathcal{J}}=\RR^n$,
  		\[ \alpha\sum_{j\in \mathcal{I}^\alpha_i}x_j+\beta\sum_{j\in \mathcal{I}^\beta_i}x_j= \left(\frac{m}{n}-1\right)x_i, \mbox{ for all }i\in \mathcal{J}.\]
  	\end{enumerate}
  \end{theorem}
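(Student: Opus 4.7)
My plan is to pivot on a direct expansion of the frame operator $S = XX^*$ evaluated at each $x_i$. Since $\mathcal{I}^\alpha_i$ and $\mathcal{I}^\beta_i$ partition $[m]\setminus\{i\}$, I expect
\[ Sx_i \;=\; \sum_{j=1}^m \langle x_i,x_j\rangle\, x_j \;=\; x_i + \alpha\sum_{j\in \mathcal{I}^\alpha_i}x_j + \beta\sum_{j\in \mathcal{I}^\beta_i}x_j, \]
and this single identity will carry the entire argument, since the condition in (2) is precisely the statement that $Sx_i = (m/n)\,x_i$ once the $x_i$ on the right of the formula is moved to the left.

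For the direction (1) $\Rightarrow$ (2), I will invoke that a unit-norm $m/n$-tight frame has $S = (m/n)I$, so $Sx_i = (m/n)x_i$ for every $i\in [m]$; comparing with the display above and isolating the off-diagonal sums yields the identity in (2) for every $i$, so taking $\mathcal{J}=[m]$ (which certainly spans $\RR^n$, as any frame does) finishes this implication.

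For (2) $\Rightarrow$ (1), the hypothesis together with the same formula says exactly $Sx_i = (m/n)x_i$ for each $i\in \mathcal{J}$. Since $S$ is linear and $\{x_i\}_{i\in \mathcal{J}}$ spans $\RR^n$, the operators $S$ and $(m/n)I$ agree on a spanning set, hence on all of $\RR^n$, so $S=(m/n)I$ and $X$ is $m/n$-tight. I do not anticipate a genuine obstacle; the only subtle point worth flagging is that (2) requires the reconstruction identity only on a spanning subset rather than on all of $[m]$, which is a real strengthening of the ``only if'' direction and is made possible precisely by the linearity of $S$, which forces an eigenvalue relation to propagate from a spanning set to the whole space.
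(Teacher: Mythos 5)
Your proposal is correct and follows essentially the same route as the paper: both directions hinge on the identity $Sx_i = x_i + \alpha\sum_{j\in \mathcal{I}^\alpha_i}x_j + \beta\sum_{j\in \mathcal{I}^\beta_i}x_j$, with (1)$\Rightarrow$(2) coming from the reconstruction formula and (2)$\Rightarrow$(1) from the fact that $S$ and $(m/n)I$ agree on a spanning set. No gaps to report.
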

  
  \begin{proof}
  	$(1) \Rightarrow (2)$:  Since the frame is $m/n$-tight, for any $i\in [m]$, we have
  	\begin{align*}
  	\frac{m}{n}x_i &= \sum_{j=1}^m\langle x_i,x_j\rangle x_j\\
  	&= \sum_{j\in \mathcal{I}^\alpha_i}\langle x_i,x_j\rangle x_j+\sum_{j\in \mathcal{I}^\beta_i}\langle x_i,x_j\rangle x_j+\langle x_i,x_i\rangle x_i\\
  	&= \alpha\sum_{j\in \mathcal{I}^\alpha_i}x_j+\beta\sum_{j\in \mathcal{I}^\beta_i}x_j+x_i, 
  	\end{align*}
  	so (2) follows.

  	$(2)\Rightarrow (1)$:  (2) implies that if the frame operator of $X$ is $S$ then $Sx_i=\frac{m}{n}x_i$ for all $i\in \mathcal{J}$. Since $\spn\{x_i\}_{i\in \mathcal{J}}=\RR^n$, it follows that $Sx=\frac{m}{n}x$ for all $x\in \RR^n$. 
  \end{proof}

  \begin{proposition}\label{pro2}
  	If $X=\{x_i\}_{i=1}^m$ is a two-distance tight frame for $\RR^n$ at angles $\alpha, \beta$ and $\alpha\not=-\beta$, then $X$ is regular. Moreover, the Grammian constant of $X$ is either $0$ or $m/n$.
  \end{proposition}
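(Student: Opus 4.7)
The plan is to exploit one clean identity for the Gram matrix $G$ of $X$. Since $X$ is an $(m/n)$-tight frame, its frame operator satisfies $XX^{*}=(m/n)I_n$, and therefore
\[
G^{2} \;=\; X^{*}X\,X^{*}X \;=\; \tfrac{m}{n}\,X^{*}X \;=\; \tfrac{m}{n}\,G.
\]
This relation, combined with the two-distance structure, will yield both regularity and the dichotomy for the Grammian constant.

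To prove regularity I would compute the diagonal of $G^{2}$ in two ways. On one hand, $(G^{2})_{ii}=(m/n)G_{ii}=m/n$. On the other hand, because $G$ is symmetric with unit diagonal and off-diagonal entries in $\{\alpha,\beta\}$,
\[
(G^{2})_{ii} \;=\; \sum_{j=1}^{m}G_{ij}^{2} \;=\; 1+\alpha^{2}|\mathcal{I}^\alpha_i|+\beta^{2}|\mathcal{I}^\beta_i|.
\]
Using $|\mathcal{I}^\beta_i|=m-1-|\mathcal{I}^\alpha_i|$, this becomes a linear equation in $|\mathcal{I}^\alpha_i|$ whose leading coefficient is $\alpha^{2}-\beta^{2}$. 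Since $X$ is a two-distance set we have $\alpha\neq\beta$, and by hypothesis $\alpha\neq-\beta$; hence $\alpha^{2}\neq\beta^{2}$ and the equation uniquely pins down $|\mathcal{I}^\alpha_i|$ independent of $i$. By the definition of a regular two-distance set, $X$ is regular.

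For the Grammian constant $c$, I would invoke Theorem \ref{thm1}: regularity is equivalent to $G\mathbf{1}=c\mathbf{1}$. Applying $G$ again and using $G^{2}=(m/n)G$ gives
\[
c^{2}\mathbf{1} \;=\; G^{2}\mathbf{1} \;=\; \tfrac{m}{n}\,G\mathbf{1} \;=\; \tfrac{m}{n}\,c\,\mathbf{1},
\]
so $c(c-m/n)=0$ and therefore $c=0$ or $c=m/n$, as claimed.

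There isn't really a difficult step; the only point that requires a small observation is that the diagonal of $G^{2}$ is precisely the row-wise frame-potential contribution, so tightness translates into a linear constraint on $|\mathcal{I}^\alpha_i|$, and the hypothesis $\alpha\neq-\beta$ is exactly what makes the coefficient $\alpha^{2}-\beta^{2}$ nonzero so that this constraint has a unique solution.
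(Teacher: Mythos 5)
Your proof is correct and is essentially the paper's argument in Gram-matrix language: computing $(G^2)_{ii}$ under $G^2=\frac{m}{n}G$ is the same as the paper's step of taking the inner product of the reconstruction formula $\frac{m}{n}x_i=\sum_j\langle x_i,x_j\rangle x_j$ with $x_i$, yielding the identical linear equation for $|\mathcal{I}^\alpha_i|$ with coefficient $\alpha^2-\beta^2$. The eigenvalue argument for $c\in\{0,m/n\}$ also matches the paper's observation that the constant row sum is an eigenvalue of $G$.
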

  \begin{proof}
  	By (2) of Theorem \ref{charac tight}, for each $i\in [m]$, we have 
  	\[\alpha\sum_{j\in \mathcal{I}^\alpha_i}x_j+\beta\sum_{j\in \mathcal{I}^\beta_i}x_j= \left(\frac{m}{n}-1\right)x_i.\]
  	Taking the inner product both sides  of this equation with $x_i$, we get
  	\[|\mathcal{I}^\alpha_i|\alpha^2+(m-|\mathcal{I}^\alpha_i|-1)\beta^2=\frac{m}{n}-1.\]
  	Solving for $|\mathcal{I}^\alpha_i|$ we have
  	\[|\mathcal{I}^\alpha_i|=\dfrac{\frac{m}{n}-1+(1-m)\beta^2}{\alpha^2-\beta^2}
  	,\] which is independent of $i$. So $X$ is regular. 
  	
  	For the ``moreover'' part, note that the row sum of the Gram matrix $G$ of $X$ is an eigenvalue of $G$, the conclusion hence follows.
  \end{proof}
  \begin{remark}
  	For the case the angles $\alpha=-\beta$, i.e., for the case ETFs, the paper \cite{FJMPW} showed that the frames still might be regular. One construction of regular ETFs in \cite{FJMPW} is using Steiner systems and real Hadamard matrices. This paper also mentioned that there is no non-balance regular ETF of 28 vectors in $\RR^7$ because of the non-existence of the corresponding strongly regular graph. By Theorem \ref{thm3}, we see that actually, there is no non-balanced, regular ETFs of $\frac{n(n+1)}{2}$ vectors in $\RR^n$, for all $n$. In other words, all such regular maximal ETFs, if they exist, must be balanced.
  \end{remark}


  We have seen that a necessary condition for a regular two-distance frame of $m$ vectors in $\RR^n$ to be tight is that its Grammian constant must be either $0$ or $m/n$. However, the following examples will show that this is not a sufficient condition.
  
  \begin{example}\label{ex1}
  	Let $a, b$ be two numbers such that $a^2+b^2=1$ and $a^2\not=2b^2$. We consider a frame in $\RR^3$ with the vectors of the form:
  	\[x_1=(a, -b, 0); \quad x_2=(0, b, a); \quad x_3=(-a, -b, 0);\quad x_4=(0, b, -a).\]
  	It is simple to check that this frame has two angles $b^2-a^2$ and $-b^2$. Moreover, $\sum_{i=1}^{4}x_i=0$. The condition $a^2\not=2b^2$ implies that it cannot be tight.	
  \end{example}
  Now we will give an example of a regular two-distance frame of $4$ vectors in $\RR^4$ with its Grammian constant $4/4$ but it is not tight.

  \begin{example}
  
  Let the frame to be
  		\[
  	x_1=\left(\frac{\sqrt{3}}{4}, -\frac{3}{4}, 0, \frac{1}{2}\right); \quad 
  	x_2=\left(0, \frac{3}{4},\frac{\sqrt{3}}{4}, \frac{1}{2}\right);\]
  	\[x_3=\left(-\frac{\sqrt{3}}{4},-\frac{3}{4},0,\frac{1}{2}\right);\quad
  	x_4=\left(0,\frac{3}{4},-\frac{\sqrt{3}}{4},\frac{1}{2}\right).
  	\]
  	Then $\{x_i\}_{i=1}^4$ is a two-distance frame for $\RR^4$ at angles $\alpha=-5/16$ and $\beta=5/8$ with respective multiplicities $k_\alpha=2$ and $k_\beta=1$.
  	We can check that this frame is not tight and 
  	\[1+k_\alpha\alpha+k_\beta\beta=1=4/4.\]
 \end{example}

  It is well-known that the Naimark complement of an equiangular tight frame is also an equiangular tight frame. This is also the case for two-distance tight frames. 
  
  \begin{theorem}\label{Naimark}
  	If $\{x_i\}_{i=1}^m$ is a regular two-distance tight frame for $\RR^n$ at angles $\alpha, \beta$ with multiplicities $k_\alpha, k_\beta$, respectively, then its Naimark complement is also a two-distance tight frame for $\RR^{m-n}$ at angles $-\frac{n}{m-n}\alpha, -\frac{n}{m-n}\beta$ with respective multiplicities $k_\alpha$ and $k_\beta$.
  \end{theorem}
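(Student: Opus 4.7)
The plan is to invoke the standard Naimark complement construction and check directly that the resulting Gram matrix has exactly two off-diagonal entries with the correct multiplicities.

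First I would pass from $X$ to a Parseval frame. Since $X$ is an $m/n$-tight frame of unit vectors, its synthesis operator satisfies $XX^{*}=(m/n)I_{n}$, so the rescaled family $\tilde{x}_{i}=\sqrt{n/m}\,x_{i}$ is a Parseval frame in $\RR^{n}$ and its Gram matrix $\tilde{G}=\tilde{X}^{*}\tilde{X}=(n/m)G$ is an orthogonal projection of rank $n$. Define
\[
P \;:=\; I_{m}-\tilde{G} \;=\; I_{m}-\tfrac{n}{m}G.
\]
Then $P$ is an orthogonal projection of rank $m-n$, so it can be factored as $P=Y^{*}Y$ for some $(m-n)\times m$ matrix $Y$, and the columns $\{\tilde{y}_{i}\}_{i=1}^{m}$ of $Y$ form a Parseval frame in $\RR^{m-n}$. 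This is the Naimark complement at the Parseval level.

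Next I would normalize to unit vectors. The diagonal of $P$ is constant equal to $1-n/m=(m-n)/m$, so $\|\tilde{y}_{i}\|^{2}=(m-n)/m$ for all $i$. Define $y_{i}=\sqrt{m/(m-n)}\,\tilde{y}_{i}$; these are unit vectors in $\RR^{m-n}$ whose Gram matrix is
\[
G' \;=\; \tfrac{m}{m-n}P \;=\; \tfrac{m}{m-n}\Bigl(I_{m}-\tfrac{n}{m}G\Bigr).
\]
Reading off entries: $G'_{ii}=1$ as required, and for $i\neq j$ one has $G'_{ij}=-\tfrac{n}{m-n}G_{ij}$. Hence whenever $G_{ij}=\alpha$ we get $G'_{ij}=-\tfrac{n}{m-n}\alpha$, and similarly for $\beta$. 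Because the positions of $\alpha$ and $\beta$ in each row of $G$ are unchanged, the multiplicities $k_{\alpha}$ and $k_{\beta}$ are preserved, so $\{y_{i}\}_{i=1}^{m}$ is a regular two-distance set at the advertised angles.

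Finally, tightness of $\{y_{i}\}$ is immediate: since $\tilde{y}_{i}$ is a Parseval frame in $\RR^{m-n}$, the rescaled family $y_{i}=\sqrt{m/(m-n)}\,\tilde{y}_{i}$ is an $m/(m-n)$-tight frame, which is the correct tight frame constant for $m$ unit vectors in $\RR^{m-n}$. Alternatively one can verify this at the Gram level by noting that $(G')^{2}=\bigl(\tfrac{m}{m-n}\bigr)^{2}P^{2}=\tfrac{m}{m-n}G'$, which is the Gram-matrix characterization of an $m/(m-n)$-tight frame. There is no real obstacle here; the only point requiring care is keeping the two scalings (Parseval versus unit-norm) consistent so that the final constants come out as stated.
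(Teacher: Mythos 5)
Your proof is correct and is essentially the same argument as the paper's: both pass to the Parseval rescaling $\sqrt{n/m}\,x_i$, take the Naimark complement (the paper phrases this via the orthonormal basis $\{(u_i,v_i)\}$ of $\RR^m$, you via the complementary projection $I_m-\tfrac{n}{m}G$, which are the same fact), observe that off-diagonal Gram entries flip sign while diagonal entries become $1-n/m$, and rescale to unit norm. The constants, angles, and preservation of multiplicities come out identically in both versions.
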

  \begin{proof}
  	Set $u_i=\sqrt{n/m}x_i$ then $\{ u_i \}_{i=1}^m$ is a Parseval frame for $\RR^n$. Let $\{v_i\}_{i=1}^m$ be its Naimark complement. Note that $\{(u_i, v_i)\}_{i=1}^{m}$ is an orthonormal basis for $\RR^{m}$ and $\{v_i\}_{i=1}^m$ is a Parseval frame for $\RR^{m-n}$. Moreover, 
  	$$\Vert v_i\Vert^2=1-\Vert u_i\Vert^2=1-\frac{n}{m}.$$
  	Set $y_i=\sqrt{\dfrac{m}{m-n}}v_i$ then $\{ y_i \}_{i=1}^m$ is a unit norm tight frame for $\RR^{m-n}$.
  	
  	We have 
  	\begin{align*}
  	\langle y_i, y_j\rangle&=\frac{m}{m-n}\langle v_i, v_j\rangle\\
  	&=-\frac{m}{m-n}\langle u_i, u_j\rangle\\
  	&=-\frac{m}{m-n}.\dfrac{n}{m}\langle x_i, x_j\rangle=-\frac{n}{m-n}\langle x_i, x_j\rangle.
  	\end{align*}
  	This completes the proof
  \end{proof}
  
   As we have shown, regular two-distance sets of $m$ vectors in $\RR^n$ with Grammian constant $0$ or $m/n$ might not be tight frames. However, their angles have the same property as two-distance tight frames as in the following theorem. Recall that this property for angles is true for regular two-distance sets with large cardinalities by Theorem \ref{thm3}.
   
   \begin{theorem}\label{angles of tight}
   	Let $X=\{x_i\}_{i=1}^m$ be a regular two-distance set in $\RR^n$ at angles $\alpha, \beta$. Suppose that $m>n+1$ and the Grammian constant of $X$ is either $0$ or $m/n$. Then $\alpha\beta\leq 0$. In particular, any two-distance tight frame must have one non-negative angle and one non-positive angle.
   \end{theorem}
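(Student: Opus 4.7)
The plan is to argue by contradiction: assume $\alpha\beta>0$, and split into four sub-cases according to the common sign of $\alpha,\beta$ and the value of the Grammian constant $c\in\{0,m/n\}$. The common workhorse is the row-sum identity $1+k_\alpha\alpha+k_\beta\beta=c$, valid for any regular two-distance set, which is just the $i$-th row sum of the Gram matrix computed entry by entry.

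Three of the four sub-cases fall to elementary sign/size comparisons, using that $m>n+1>n$ forces $m/n>1$. If $\alpha,\beta>0$ and $c=0$, the left side of the row-sum identity is at least $1$, contradicting $c=0$. If $\alpha,\beta<0$ and $c=m/n$, the left side is strictly less than $1$, contradicting $c=m/n>1$. If $\alpha,\beta<0$ and $c=0$, then $X$ is balanced by Proposition \ref{pro1} and all pairwise inner products are strictly negative, so Theorem \ref{nega angles}(1) forces $m\leq n+1$, contradicting $m>n+1$.

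The genuinely delicate sub-case, and the main obstacle, is $\alpha,\beta>0$ with $c=m/n$; here the row-sum identity alone is perfectly consistent. To break it, I would invoke the frame potential inequality of Corollary \ref{co1},
\[ 1+k_\alpha\alpha^2+k_\beta\beta^2 \geq \frac{m}{n}, \]
and subtract the row-sum identity $1+k_\alpha\alpha+k_\beta\beta=m/n$ from it to obtain
\[ k_\alpha\alpha(\alpha-1)+k_\beta\beta(\beta-1) \geq 0. \]
Because the $x_i$ are distinct unit vectors, $\alpha,\beta\in(-1,1)$; by the definition of a two-distance regular set both $k_\alpha$ and $k_\beta$ are at least $1$; and under the assumption $0<\alpha,\beta<1$, each summand is strictly negative. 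This is the desired contradiction.

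Finally, the ``in particular'' statement for two-distance tight frames is immediate: by Proposition \ref{pro2} such a frame is automatically regular with $c\in\{0,m/n\}$ whenever $\alpha\neq-\beta$, so the main conclusion applies; and in the remaining ETF case $\alpha=-\beta$ one has $\alpha\beta=-\alpha^2\leq 0$ at once.
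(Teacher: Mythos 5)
Your proof is correct and follows essentially the same route as the paper: the both-negative case is dispatched by Theorem \ref{nega angles}, the case $c=0$ with positive angles by the row-sum identity, and the key case $\alpha,\beta>0$ with $c=m/n$ by playing the frame-potential bound of Corollary \ref{co1} against the row-sum identity, exactly as in the paper. Your four-way case split is only a cosmetic reorganization of the same argument.
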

   \begin{proof}
   	Since $m>n+1$, by Theorem \ref{nega angles}, $\alpha$ and $\beta$ cannot be both negative. 
   	
   	If the Grammian constant $c=0$, then obviously the angles cannot be both non-negative. Now we consider the case $c=m/n$. Let $k_\alpha$ and $k_\beta$ be the multiplicities of $\alpha$ and $\beta$, respectively. Suppose that $\alpha, \beta>0$. Then by Corollary \ref{co1}, we have
   	\[\frac{m}{n}\leq 1+k_\alpha\alpha^2+k_\beta\beta^2< 1+k_\alpha\alpha+k_\beta\beta=\frac{m}{n},\] which cannot happen.  
   	
   	Now assume $X$ is a two-distance tight frame of $m$ vectors in $\RR^n$ at angles $\alpha$ and $\beta$. If $\alpha=-\beta$, then the conclusion is obvious. Otherwise, by Proposition \ref{pro2}, $X$ is regular and then the conclusion follows since its Grammian constant is always either $0$ or $m/n$. 
   \end{proof}
   
   \begin{remark}
   	\begin{enumerate}
   		\item The condition $m>n+1$ in Theorem \ref{angles of tight} is necessary. Example \ref{ex1} shows that there are regular two-distance sets of $4$ vectors in $\RR^3$ with both negative angles.
   		\item If $X$ is a two-distance tight frame of $n+1$ vectors in $\RR^n$ at angles $\alpha, \beta$, then its Naimark complement is a two-distance tight frame of $n+1$ vectors in $\RR$ at angles $-\frac{n}{m-n}\alpha, -\frac{n}{m-n}\beta$. Thus, we must have $|\frac{n}{m-n}\alpha|=|\frac{n}{m-n}\beta|$, and hence $\alpha=-\beta$. Actually, $X$ is obtained by negating some vectors of a simplex on $\RR^n$.
   	\end{enumerate}
   \end{remark}

  \begin{proposition}\label{zero_angle}
  	Let $X=\{x_i\}_{i=1}^m$ be a two-distance tight frame for $\RR^n$ at angles $\alpha, \beta$ with respective multiplicities $k_\alpha, k_\beta$. If $\alpha=0$, then we have one of the following:
  	\begin{enumerate}
  		\item $X$ is $(k_\beta+1)$ copies of an orthonormal basis of $\RR^n$.
  		\item The Naimark complement of $X$ is $(k_\beta+1)$ copies of an orthonormal basis of $\RR^{m-n}$.
  	\end{enumerate} 
  	
  \end{proposition}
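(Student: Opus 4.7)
My plan is to exploit the identity $G^2 = (m/n)G$ satisfied by the Gram matrix $G$ of $X$ (equivalent to the tight frame condition $XX^* = (m/n)I$), together with the fact that, since $\alpha = 0 \neq -\beta$, Proposition~\ref{pro2} guarantees $X$ is regular with multiplicities $k_\alpha, k_\beta$.

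First I would show that the relation on $[m]$ defined by $i \sim j$ iff $\langle x_i, x_j\rangle = \beta$ (with $i \sim i$) is an equivalence relation. Reflexivity and symmetry are clear; for transitivity, expanding $(G^2)_{ij} = (m/n) G_{ij}$ in the case $G_{ij} = 0$ forces the count $|\{k : G_{ik} = G_{kj} = \beta\}|$ to vanish. Consequently, after reordering, $G$ is block diagonal, with all between-class entries equal to $0$ and all within-class off-diagonal entries equal to $\beta$.

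Next I would extract two scalar relations from $G^2 = (m/n)G$: the diagonal case $(G^2)_{ii} = m/n$ yields $1 + k_\beta \beta^2 = m/n$, while the $i \sim j$, $i \neq j$ case combined with the block structure shows that every class has the common size $s = k_\beta + 1$ and gives $m/n = 2 + (k_\beta - 1)\beta$. Equating the two expressions for $m/n$ reduces to $(\beta - 1)(k_\beta \beta + 1) = 0$, so $\beta = 1$ or $\beta = -1/k_\beta$.

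When $\beta = 1$, the vectors in each class coincide and distinct classes are pairwise orthogonal; since $X$ must span $\RR^n$ and the distinct vectors are orthonormal, there are exactly $n$ of them and each is repeated $s = k_\beta + 1$ times, giving (1). When $\beta = -1/k_\beta$, each class is a regular simplex spanning a $k_\beta$-dimensional subspace and the classes are mutually orthogonal; a dimension count forces $n = (m - n)k_\beta$, hence $-\tfrac{n}{m-n}\beta = 1$. Applying Theorem~\ref{Naimark} then transports us to the first case: the Naimark complement is a two-distance tight frame at angles $0$ and $1$, so by the analysis just completed it is $(k_\beta + 1)$ copies of an orthonormal basis of $\RR^{m-n}$, giving (2). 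The main obstacle is the transitivity step, where one must recognize $(G^2)_{ij}$ as a count of length-two paths weighted by $\beta^2$; after that, everything reduces to elementary arithmetic and an appeal to Theorem~\ref{Naimark}.
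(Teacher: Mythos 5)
Your proof is correct, but it takes a genuinely different, more structural route than the paper's. The paper works entirely with two scalar identities: Proposition \ref{pro2} gives $1+k_\alpha\alpha+k_\beta\beta\in\{0,m/n\}$, and Corollary \ref{co1} with equality gives $1+k_\alpha\alpha^2+k_\beta\beta^2=m/n$; subtracting yields $k_\alpha\alpha(1-\alpha)+k_\beta\beta(1-\beta)\le 0$, which with $\alpha=0$ and $\beta\le 1$ forces $\beta=1$ whenever $\beta>0$, and the case $\beta<0$ is dispatched by applying the same inequality to the Naimark complement, whose angle $-\frac{n}{m-n}\beta>0$ must then equal $1$. You instead extract the full combinatorial structure of the Gram matrix from $G^2=\frac{m}{n}G$: the equivalence-relation/block-diagonal decomposition, the two trace identities $1+k_\beta\beta^2=m/n$ and $2+(k_\beta-1)\beta=m/n$, and the resulting dichotomy $\beta\in\{1,-1/k_\beta\}$. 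Your approach is longer but buys more: it identifies $X$ itself in the second case as a union of $m/(k_\beta+1)$ mutually orthogonal regular simplices (the paper only describes the Naimark complement there), and it supplies the justification --- vectors at inner product $1$ coincide, every class has size $k_\beta+1$ by regularity, and the distinct vectors are orthogonal and span --- that the paper leaves implicit in the step ``$\beta=1$ implies $X$ is $(k_\beta+1)$ copies of an orthonormal basis.'' Two small points you should make explicit: the within-class identity $2+(k_\beta-1)\beta=m/n$ requires $k_\beta\ge 1$ (if $k_\beta=0$ the set is just an orthonormal basis and case (1) holds trivially), and the transitivity step needs $\beta\ne 0$, which holds because $\beta\ne\alpha=0$.
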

  \begin{proof}
  	By assumption, $X$ must be regular. Moreover, we have that
  	\[1+k_\alpha \alpha+k_\beta\beta=0, \mbox{ or } 1+k_\alpha \alpha+k_\beta\beta=\frac{m}{n},\]
  	and
  	\[1+k_\alpha \alpha^2+k_\beta\beta^2=\frac{m}{n}.\]
  	Hence,  
  	\[k_\alpha\alpha(1-\alpha)+k_\beta\beta(1-\beta)\leq 0.\]

  	Therefore, if $\beta>0$ then $\beta=1$. This implies that $X$ is $(k_\beta+1)$ copies of an orthonormal basis of $\RR^n$.
  	
  	Consider the case $\beta<0$. Let $Y$ be the Naimark complement of $X$. Then by Theorem \ref{Naimark}, $Y$ has angles $a=-\frac{n}{m-n}\alpha$ and $b= -\frac{n}{m-n}\beta$ with respective multiplicities $k_\alpha$ and $k_\beta$. Hence, $a=0$ and $b>0.$
  	
  	But for the frame $Y$, we also have
  	\[k_\alpha a(1-a)+k_\beta b(1-b)\leq 0.\] 
  	This implies $b=1$ and $Y$ is $(k_\beta+1)$ copies of an orthonormal basis of $\RR^{m-n}$. 
  \end{proof}
 
  Before considering more properties of two-distance tight frames, we will present an interesting result about tight frames.

  		\begin{theorem}
  			Suppose $X=\{x_i\}_{i=1}^m$ is a Parseval frame with the property that its Gram matrix has constant row sum. Then either $X$ or its Naimark complement is balanced. More precisely, if $Y=\{y_i\}_{i=1}^m$ is its Naimark complement, then we have
  			\begin{enumerate}
  				\item $\sum_{i=1}^{m}x_i=0$ if and only if $\sum_{j=1}^{m}\langle y_i, y_j\rangle=1$ for all $i$.
  				\item  $\sum_{i=1}^{m}y_i=0$ if and only if $\sum_{j=1}^{m}\langle x_i, x_j\rangle=1$ for all $i$.
  			\end{enumerate}
  		\end{theorem}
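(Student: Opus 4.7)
The plan is to exploit the fundamental identity $G_X+G_Y=I$, where $G_X=X^*X$ and $G_Y=Y^*Y$ are the Gram matrices of $X$ and its Naimark complement $Y$. This is because stacking $X$ (viewed as $n\times m$) on top of $Y$ (viewed as $(m-n)\times m$) produces an $m\times m$ orthogonal matrix, so the full Gram matrix of the combined system is $I_m$. In addition, since $X$ is Parseval we have $XX^*=I$, hence $G_X$ is an orthogonal projection of rank $n$, and likewise $G_Y$ is an orthogonal projection of rank $m-n$.

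The next key observation is that having constant row sum is precisely the statement that the all-ones vector $\mathbf{1}$ is an eigenvector. If $G_X$ has constant row sum $c_X$, then $G_X\mathbf{1}=c_X\mathbf{1}$, so $c_X$ is an eigenvalue of the projection $G_X$ and therefore $c_X\in\{0,1\}$. The relation $G_X+G_Y=I$ then forces $G_Y\mathbf{1}=(1-c_X)\mathbf{1}$, so $G_Y$ also has constant row sum $c_Y=1-c_X\in\{0,1\}$.

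From here the theorem is essentially immediate. By Proposition \ref{pro1}, a family is balanced if and only if each row of its Gram matrix sums to zero. So if $c_X=0$ then $X$ is balanced, and if $c_X=1$ then $c_Y=0$ and $Y$ is balanced; in either case one of $X, Y$ is balanced. For part (1), $\sum_{i=1}^m x_i=0$ is equivalent to $c_X=0$, which via $c_X+c_Y=1$ is equivalent to $c_Y=1$, i.e.\ $\sum_{j=1}^m\langle y_i,y_j\rangle=1$ for every $i$. Part (2) follows by swapping the roles of $X$ and $Y$, since the Naimark complement of $Y$ is (unitarily equivalent to) $X$ and the same identity $G_X+G_Y=I$ applies.

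There is no real obstacle here; the only point requiring care is verifying cleanly that $G_X$ is a projection (using the Parseval assumption $XX^*=I$) and that $G_X+G_Y=I$ for the standard Naimark construction, both of which are one-line checks. The proof is then just the spectral observation that the eigenvalues of a projection are $0$ and $1$, combined with Proposition \ref{pro1}.
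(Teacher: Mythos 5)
Your proposal is correct and follows essentially the same route as the paper: both arguments rest on the facts that the Gram matrix $G$ of a Parseval frame is a projection (so its only eigenvalues are $0$ and $1$), that constant row sum $c$ means $G\mathbf{1}=c\mathbf{1}$ and hence $c\in\{0,1\}$, and that the Naimark complement has Gram matrix $I-G$, from which the dichotomy and both equivalences follow via Proposition \ref{pro1}. The only cosmetic difference is that you spell out the one-line verifications ($G^2=G$ from $XX^*=I$, and $G_X+G_Y=I$) that the paper takes as known.
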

  		\begin{proof}
  			Let $G$ be the Gram matrix of $X$. By assumption, $\sum_{j=1}^{m}G_{ij}=c$, for all $i$. 
  			 Then we have that
  			$ G\bold{1}=c\bold{1}.$	
  			Since $G$ has only 2 eigenvalues, $0$ and $1$, we have that $c=0$ or $c=1$.
  			If $c=0$ then $X$ is balanced by Proposition \ref{pro1}. 
  			
  			Consider the case $c=1$.
  			Note that $I-G$ is the Gram matrix of $Y$. Since
  			\[(I-G)\bold{1}=\bold{1}-G\bold{1}=0,\]
  			it follows that each row of $I-G$ sums to zero, which is the claim.
  			 			
  			Now we argue
  			\begin{align*}
  			\sum_{i=1}^{m}x_i=0 &\mbox{ iff } G\bold{1}=0
  			\mbox{ iff } (I-G)\bold{1}=\bold{1}
  			\mbox{ iff } \sum_{j=1}^{m}\langle y_i, y_j\rangle=1, \mbox{ for all i}, 
  			\end{align*}
  			which is part (1). With the same argument we will get part (2).
  		\end{proof}
 \begin{corollary}
 	Let $X=\{x_i\}_{i=1}^m$ be a regular, two-distance tight frame for $\RR^n$, and let $Y=\{y_i\}_{i=1}^m$ be its normalized Naimark complement. Then either $X$ or $Y$ is balanced.  Moreover, 
 	\begin{enumerate}
 		\item $\sum_{i=1}^{m}x_i=0$ if and only if $\sum_{j=1}^{m}\langle y_i, y_j\rangle=\frac{m}{m-n} \mbox{ for all i}$.\\
 		\item $\sum_{i=1}^{m}y_i=0$ if and only if $\sum_{j=1}^{m}\langle x_i, x_j\rangle=\frac{m}{n} \mbox{ for all i}.$ 
 	\end{enumerate}
 \end{corollary}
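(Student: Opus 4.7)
The plan is to reduce the corollary to the preceding theorem through two rescalings. First I would normalize $X$ to a Parseval frame by setting $\tilde{x}_i = \sqrt{n/m}\,x_i$, so that $\{\tilde{x}_i\}_{i=1}^m$ is a Parseval frame for $\RR^n$. By Theorem \ref{thm1}, regularity of $X$ guarantees that its Gram matrix has constant row sum $c$, and therefore the Gram matrix of $\tilde{X}$ also has constant row sum, equal to $(n/m)c$. Let $\{y'_i\}_{i=1}^m$ denote the (Parseval) Naimark complement of $\tilde{X}$. The standard identity $\|y'_i\|^2 = 1 - \|\tilde{x}_i\|^2 = (m-n)/m$ then shows that the normalized Naimark complement of $X$ is precisely $y_i = \sqrt{m/(m-n)}\, y'_i$.

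Applying the preceding theorem to the Parseval frame $\tilde{X}$ immediately yields that either $\tilde{X}$ or $\{y'_i\}_{i=1}^m$ is balanced; since $x_i$ and $y_i$ are positive scalar multiples of $\tilde{x}_i$ and $y'_i$ respectively, this translates to the assertion that either $X$ or $Y$ is balanced. For the quantitative ``moreover'' part I would simply track the two scalar factors. For part (1), using $\langle y_i, y_j\rangle = \tfrac{m}{m-n}\langle y'_i, y'_j\rangle$, the chain
\[
\sum_{i=1}^m x_i = 0 \iff \sum_{i=1}^m \tilde{x}_i = 0 \iff \sum_{j=1}^m \langle y'_i, y'_j\rangle = 1\ \forall i \iff \sum_{j=1}^m \langle y_i, y_j\rangle = \tfrac{m}{m-n}\ \forall i
\]
follows directly from part (1) of the preceding theorem. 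Symmetrically, for part (2), using $\langle \tilde{x}_i, \tilde{x}_j\rangle = (n/m)\langle x_i, x_j\rangle$, part (2) of that theorem gives $\sum_i y_i = 0$ iff $\sum_{j=1}^m \langle x_i, x_j\rangle = m/n$ for all $i$.

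There is essentially no obstacle beyond careful bookkeeping of the scalars $\sqrt{n/m}$ and $\sqrt{m/(m-n)}$. The only subtlety worth flagging is that one needs $m>n$ for the normalized Naimark complement to be defined, but this is automatic whenever a genuine Naimark complement is spoken of.
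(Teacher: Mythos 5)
Your proposal is correct and is exactly the argument the paper intends: the corollary is stated without proof as an immediate consequence of the preceding theorem, obtained by rescaling $X$ to the Parseval frame $\{\sqrt{n/m}\,x_i\}$ and tracking the factors $n/m$ and $m/(m-n)$ in the Gram matrices. Your bookkeeping of the scalars and the remark that $m>n$ is needed are both accurate.
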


  For given $n$, it is known that there are only finitely many equiangular tight frames (up to unitary equivalence) in $\RR^n$. This is still the case for regular two-distance tight frames of distinct vectors.
  
  Indeed, suppose $X$ is a regular two-distance tight frame of $m$ distinct vectors in $\RR^n$ at angles $\alpha, \beta$, with respective multiplicities $k_\alpha, k_\beta$.
  	
  	Note that $(\alpha, \beta)$ is a solution of the system of equations
  	\begin{align}\label{eq3}
  	1+k_\alpha x+k_\beta y=0 \mbox{ and } 1+k_\alpha x^2+k_\beta y^2=m/n
  	\end{align}
  	or 
  	\begin{align}\label{eq4}
  		1+k_\alpha x+k_\beta y=m/n \mbox{ and } 1+k_\alpha x^2+k_\beta y^2=m/n,
  	\end{align}
  where the former corresponds to the case for which $X$ is balanced, and the laster is for the case where the Naimark complement of $X$ is balanced. It is easy to check that both systems \eqref{eq3} and \eqref{eq4} have two solutions. Moreover, for given $m$, there are at most $m-2$ possibilities for $k_\alpha$. Since $m\leq \frac{n(n+3)}{2}$ for any dimension $n$, it follows that there are finitely many two-distance tight frames for $\RR^n$. We will give a more precise result later in this section.
  
  If the number of vectors of a regular two-distance set is odd, then there is a restriction on multiplicities of its angles.
  
  \begin{proposition}\label{even multip}
  	Let $X$ be a regular two-distance set of $m$ vectors at angles $\alpha, \beta$, with respective multiplicities $k_\alpha, k_\beta$. If $m$ is odd, then both $k_\alpha$ and $k_\beta$ are even.
  \end{proposition}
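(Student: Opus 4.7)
The plan is to use a standard degree-sum (handshake) argument on the graph whose edges encode the angle $\alpha$. Define a graph $G_\alpha$ on the vertex set $[m]$ by placing an edge between $i$ and $j$ whenever $\langle x_i, x_j \rangle = \alpha$. Because $X$ is regular, the degree of every vertex $i$ in $G_\alpha$ equals $|\mathcal{I}^\alpha_i| = k_\alpha$, so $G_\alpha$ is a $k_\alpha$-regular graph. By the handshake lemma, the number of edges is $m k_\alpha / 2$, which must be a non-negative integer. Since $m$ is odd by hypothesis, the factor of $2$ in the denominator must be cancelled by $k_\alpha$, forcing $k_\alpha$ to be even.

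For the multiplicity $k_\beta$, one could repeat the same argument using the graph $G_\beta$. Alternatively, it suffices to observe that $k_\alpha + k_\beta = m - 1$ is even (since $m$ is odd), so $k_\alpha$ and $k_\beta$ have the same parity; combined with $k_\alpha$ being even, this gives $k_\beta$ even as well. Either route finishes the proof.

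There is no real obstacle here: the statement is a parity condition following from the regularity of the Gram pattern, and the only input needed is that the counts $|\mathcal{I}^\alpha_i|$ are independent of $i$ (which is exactly the definition of regular two-distance set). The argument does not require any geometric information about the vectors beyond the combinatorial structure of their Gram matrix. I would write it up in two or three lines.
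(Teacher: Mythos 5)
Your proof is correct and is essentially the paper's argument in graph-theoretic clothing: the paper counts the $\alpha$-entries of the symmetric Gram matrix to conclude $mk_\alpha$ is even, which is exactly the handshake lemma for your graph $G_\alpha$. The parity observation $k_\alpha+k_\beta=m-1$ is a fine (and slightly slicker) way to handle $k_\beta$, but there is no substantive difference from the paper's proof.
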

  \begin{proof}
  	Let $G$ be the Gram matrix of $X$. This is a $m\times m$ self-adjoint matrix. Since $X$ is regular, each row of $G$ has exactly $k_\alpha$ elements $\alpha$ and $k_\beta$ elements $\beta$. It follows that both $mk_\alpha$ and $mk_\beta$ are even and so $k_\alpha$ and $k_\beta$ are even.
  \end{proof}
  
  We have mentioned that if $\alpha$ and $\beta$ are the angles of a regular two-distance tight frame of $m$ vectors in $\RR^n$, with multiplicities $k_\alpha$ and $k_\beta$, then $(\alpha, \beta)$ is one solution of either the system \eqref{eq3} or \eqref{eq4}. Let us denote by $(\alpha', \beta')$ the remaining solution of the system. Then a natural question is that whether $\alpha'$ and $\beta'$ are angles for some regular two-distance tight frame of $m$ vectors in $\RR^n$ with respective multiplicities $k_\alpha$ and $k_\beta$. In order to answer this question, we will first construct interesting matrices.
  \begin{theorem}\label{const matrix}
  	Let $X$ be a regular two-distance tight frame of $m$ vectors in $\RR^n$ at angles $\alpha, \beta$, and multiplicities $k_\alpha, k_\beta$, respectively. Let $G$ be its Gram matrix and $c$ be its Grammian constant. Let $G'$ be the matrix with all 1 in the diagonal and its off diagonal entries defined by 
  	\[G'_{ij}=\gamma-G_{ij},\]
  	where $\gamma=-\frac{2}{m-1}$ if $c=0$, and $\gamma=\frac{2(m-n)}{n(m-1)}$ if $c=m/n$.
  	
  	In other words, 
  	\[G'=(2-\gamma)I+\gamma J-G.\] 
  	Then $G'$ has the following properties.
  	\begin{enumerate}
  		\item $G'$ is self-adjoint and each row has exactly $k_\alpha$ elements $\gamma-\alpha$, and $k_\beta$ elements $\gamma-\beta$.
  		\item $G'$ has constant row sum. More precisely, 
  		\[1+k_\alpha(\gamma-\alpha)+k_\beta(\gamma-\beta) = 0\mbox{ if } \gamma=-\frac{2}{m-1}, \mbox{ and}\]
  		\[1+k_\alpha(\gamma-\alpha)+k_\beta(\gamma-\beta)=m/n \mbox{ if } \gamma=\frac{2(m-n)}{n(m-1)}.\]
  		\item We have
  		\[1+k_\alpha(\gamma-\alpha)^2+k_\beta(\gamma-\beta)^2=m/n.\]	
  	\end{enumerate}
  	
  \end{theorem}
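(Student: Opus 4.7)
The plan is to verify the three claims in order by directly exploiting the compact formula $G' = (2-\gamma)I + \gamma J - G$ together with the two defining identities that a regular two-distance tight frame must satisfy, namely the row-sum identity coming from the Grammian constant and the squared-row identity coming from Corollary \ref{co1}.

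For (1), symmetry of $G'$ is immediate since $I$, $J$, and $G$ are all symmetric. The diagonal entries of $(2-\gamma)I + \gamma J$ equal $2$, so $G'_{ii} = 2 - 1 = 1$. For $i \neq j$ the entry $G'_{ij} = \gamma - G_{ij}$ equals $\gamma - \alpha$ whenever $G_{ij} = \alpha$ and $\gamma - \beta$ whenever $G_{ij} = \beta$, so the regularity of $X$ transfers directly to $G'$ with the same multiplicities $k_\alpha$ and $k_\beta$ in each row.

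For (2), I would compute the $i$-th row sum of $G'$ in two ways. On the one hand, by part (1) it equals $1 + k_\alpha(\gamma - \alpha) + k_\beta(\gamma - \beta)$. On the other hand, using the formula $G' = (2-\gamma)I + \gamma J - G$ and Theorem \ref{thm1} (each row of $G$ sums to $c$), it equals $(2-\gamma) + \gamma m - c = 2 + (m-1)\gamma - c$. Substituting the two specified values of $\gamma$ then yields $0$ when $c = 0$ and $m/n$ when $c = m/n$, which is exactly (2).

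For (3), I would expand
\[ 1 + k_\alpha(\gamma - \alpha)^2 + k_\beta(\gamma - \beta)^2 = 1 + (m-1)\gamma^2 - 2\gamma\bigl(k_\alpha \alpha + k_\beta \beta\bigr) + \bigl(k_\alpha \alpha^2 + k_\beta \beta^2\bigr), \]
using $k_\alpha + k_\beta = m-1$. By the Grammian constant identity $1 + k_\alpha \alpha + k_\beta \beta = c$ and by Corollary \ref{co1} in its tight case, $1 + k_\alpha \alpha^2 + k_\beta \beta^2 = m/n$, so the right-hand side simplifies to $m/n + (m-1)\gamma^2 - 2\gamma(c-1)$. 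Hence (3) reduces to checking that $(m-1)\gamma = 2(c-1)$, which is precisely how the two values of $\gamma$ were chosen for $c \in \{0, m/n\}$. There is no substantive obstacle in the argument; the only care needed is the bookkeeping between off-diagonal sums of $G$ and full row sums so that the diagonal entries are counted exactly once. The real conceptual content of the theorem is the observation that the two specified values of $\gamma$ are precisely those that leave the frame-potential identity invariant under the involution $G_{ij} \mapsto \gamma - G_{ij}$, which is what will allow a ``partner'' tight frame to be constructed from $G'$ in the sequel.
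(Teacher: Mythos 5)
Your proposal is correct and follows essentially the same route as the paper: verify (1) entrywise, and prove (2) and (3) by expanding and invoking the two identities $1+k_\alpha\alpha+k_\beta\beta=c$ and $1+k_\alpha\alpha^2+k_\beta\beta^2=m/n$. The only (cosmetic) difference is that you treat both values of $\gamma$ uniformly through the single relation $(m-1)\gamma=2(c-1)$, whereas the paper writes out the case $\gamma=-\tfrac{2}{m-1}$ and declares the other case similar.
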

  \begin{proof}
  	The claim (1) follows easily from the properties of $G$. For (2) and (3), we will prove the case $\gamma=-\frac{2}{m-1}$, since the other case is similar.
  	
  	For (2), note that by definition, $c=1+k_\alpha\alpha+k_\beta\beta=0$, so  
  	\begin{align*}
  	1+k_\alpha(\gamma-\alpha)+k_\beta(\gamma-\beta)&=1-(k_\alpha\alpha+k_\beta\beta)+(m-1)\gamma\\
  	&=2-(m-1)\frac{2}{m-1}\\
  	&=0,
  	\end{align*}
  	which is the claim.
  	For (3) we have
  	\begin{align*}
  	1+k_\alpha(\gamma-\alpha)^2+k_\beta(\gamma-\beta)^2&=1+k_\alpha\alpha^2+k_\beta \beta^2\\
  	&-2\gamma(k_\alpha\alpha+k_\beta\beta)+(m-1)\gamma^2.
  	\end{align*}
  	Using the fact that 
  	\[1+k_\alpha\alpha+k_\beta\beta=0 \mbox{ and } 1+k_\alpha\alpha^2+k_\beta \beta^2=m/n,\]
  	we will get the desired result.
  \end{proof}
  
  It turns out that for a fixed dimension $n$, there is only one case for which the solution $(\alpha', \beta')$ of the system \eqref{eq3} corresponds to angles of a two-distance tight frame. A similar result holds true when we consider the system \eqref{eq4}. The following lemma will play a role for showing this.
  
  \begin{lemma}\label{lem1}
  A $m\times m$ self-adjoint matrix $G$ is the Gram matrix of a two-distance tight frame of $m$ vectors for $\RR^n$ if and only if it satisfies the following conditions:
  \begin{enumerate}
  	\item $G^2=\frac{m}{n}G$.
  	\item $G_{ii}=1$ for all $i$.
  	\item There exist $\alpha$ and $\beta$ such that $G_{ij}$ equals either $\alpha$ or $\beta$, where $\alpha\not=\beta$.
  \end{enumerate}
  \end{lemma}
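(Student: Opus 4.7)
The plan is to prove both directions by exploiting the standard factorization $G = X^*X$ of a positive semidefinite Gram matrix, together with the identity relating the Gram matrix and the frame operator via $GG = X^*(XX^*)X$.

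For the forward direction, suppose $G$ is the Gram matrix of a two-distance tight frame $X=\{x_i\}_{i=1}^m$ for $\RR^n$. Conditions (2) and (3) are immediate from the definitions (unit norm vectors; two distance values $\alpha\neq\beta$). For condition (1), I would use that a unit-norm $A$-tight frame of $m$ vectors in $\RR^n$ has $A=m/n$, so the frame operator satisfies $XX^* = (m/n)I$. Then
\[
G^2 = (X^*X)(X^*X) = X^*\!\left(XX^*\right)\!X = \frac{m}{n}\,X^*X = \frac{m}{n}\,G,
\]
which is (1).

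For the reverse direction, assume $G$ satisfies (1), (2), (3). First I would extract the spectral information from (1): since $G$ is self-adjoint and $G^2=(m/n)G$, its minimal polynomial divides $t(t-m/n)$, so its eigenvalues lie in $\{0,m/n\}$. Combined with (2), which gives $\tr G = m$, the eigenvalue $m/n$ must occur with multiplicity exactly $n$ and the eigenvalue $0$ with multiplicity $m-n$. In particular, $G$ is positive semidefinite of rank $n$, so by spectral decomposition we can write $G=X^*X$ for some $n\times m$ matrix $X$ of full row rank $n$. The columns $\{x_i\}_{i=1}^m$ of $X$ then lie in $\RR^n$, are unit vectors by (2), and have pairwise inner products taking only the two values $\alpha,\beta$ by (3); in particular they form a two-distance set.

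It remains to show that $\{x_i\}_{i=1}^m$ is a tight frame, i.e.\ $XX^* = (m/n)I$. From (1) we have
\[
X^*\!\left(XX^* - \tfrac{m}{n}I\right)\!X = (X^*X)(X^*X) - \tfrac{m}{n}(X^*X) = G^2 - \tfrac{m}{n}G = 0.
\]
Because $X$ has rank $n$, it is surjective from $\RR^m$ onto $\RR^n$ and $X^*$ is injective. For any $v\in\RR^n$ choose $u\in\RR^m$ with $Xu=v$; then $X^*(XX^*-\tfrac{m}{n}I)v = X^*(XX^*-\tfrac{m}{n}I)Xu = 0$, and injectivity of $X^*$ forces $(XX^*-\tfrac{m}{n}I)v=0$. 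Since $v$ was arbitrary, $XX^* = (m/n)I$, so $X$ is an $(m/n)$-tight frame, completing the proof.

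The only step requiring care is the last one, deducing $XX^* = (m/n)I$ from $X^*(XX^*-\tfrac{m}{n}I)X=0$; this is routine once one notes that full row rank of $X$ makes $X^*$ injective and $\operatorname{Im}(X)=\RR^n$, so there is no real obstacle.
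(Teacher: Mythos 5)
Your proof is correct and follows essentially the same route as the paper: eigenvalues in $\{0,m/n\}$ from condition (1), the trace computation forcing multiplicity $n$, and a factorization $G=X^*X$ with the columns of $X$ giving the frame. The only cosmetic difference is at the end: the paper chooses $X=\sqrt{m/n}\,U_1^*$ explicitly from the orthonormal eigenvectors so that $XX^*=\frac{m}{n}U_1^*U_1=\frac{m}{n}I$ is immediate, whereas you take a generic rank-$n$ factorization and then deduce tightness from $X^*\bigl(XX^*-\frac{m}{n}I\bigr)X=0$ via injectivity of $X^*$ — both are valid.
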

  \begin{proof}
  	If $G$ is the Gram matrix of a regular two-distance tight frame, then it is obvious that $G$ satisfies conditions $(1), (2)$ and $(3)$.
  	
  	Conversely, suppose $G$ satisfies $(1), (2)$ and $(3)$. Since $G^2=\frac{m}{n}G$, it follows that $0$ and $m/n$ are only two possible eigenvalues of $G$ . Thus, $G$ is positive semidefinite and hence it is the Gram matrix for some set of vectors. Since $\tr(G)=m>0$, $m/n$ must be an eigenvalue of $G$. Let $k$ be the multiplicity of $m/n$. Then $\tr(G)=m=k\frac{m}{n}$. This implies $k=n$ and so this set of vectors spans $\RR^n$, i.e., it is a frame for $\RR^n$. To be more precise, we can choose the vectors as in the following way. Let $D$ be the diagonal matrix of order $m$ of the form, 
  	$D=\begin{bmatrix}
  		\frac{m}{n}I&0\\
  		0&0
  	\end{bmatrix}$, where $I$ is the identity matrix of order $n$. Then there exists an unitary matrix $U$ of eigenvectors of $G$ such that
  	\[G=UDU^*=\begin{bmatrix}
  	U_1&U_2
  	\end{bmatrix}\begin{bmatrix}
  	\frac{m}{n}I&0\\
  	0&0
  	\end{bmatrix}\begin{bmatrix}
  	U_1^*\\
  	U_2^*
  	\end{bmatrix}=\frac{m}{n}U_1U_1^*,\]  where $U_1$ and $U_2$ are $m\times n$ and $m\times (m-n)$ submatrices of $U$ whose columns are eigenvectors of $G$ with eigenvalues $m/n$ and $0$, respectively. Now choose the set of vectors to be the columns of the $n\times m$ matrix $X=\sqrt{\frac{m}{n}}U_1^*$. These vectors form a two-distance tight frame since $XX^*=\frac{m}{n}U_1^*U_1=\frac{m}{n}I$ and its Gram matrix $G$ satisfies conditions $(2)$ and $(3)$. This completes the proof.
  \end{proof}

  	\begin{theorem}\label{thm4}
  		Let $X$ be a regular two-distance tight frame of $m$ vectors in $\RR^n$ at angles $\alpha$ and $\beta$, with multiplicities $k_\alpha$ and $k_\beta$. Let $G$ be its Gram matrix and $c$ be its Grammian constant. Let $G'$ be defined by
  		\[G'=(2-\gamma)I+\gamma J-G,\]
  		where $\gamma=-\frac{2}{m-1}$ if $c=0$, and $\gamma=\frac{2(m-n)}{n(m-1)}$ if $c=m/n$. 

We have the following:
\begin{enumerate}
	\item If $\gamma=-\frac{2}{m-1}$, then $G'$ is the Gram matrix of a regular two-distance tight frame $Y$ for $\RR^n$ if and only if $m=2n+1$.
	\item If $\gamma=\frac{2(m-n)}{n(m-1)}$, then $G'$ is the Gram matrix of a regular  two-distance tight frame $Y$ for $\RR^n$ if and only if $m=2n-1$.
\end{enumerate}  		
  	 Moreover, the angles of $Y$ are $\gamma-\alpha$ and $\gamma-\beta$ with the same multiplicities of $X$, i.e., $k_\alpha$ and $k_\beta$, respectively. $Y$ is balanced if $\gamma=-\frac{2}{m-1}$ and the Naimark complement of $Y$ is balanced if $\gamma=\frac{2(m-n)}{n(m-1)}$.
  	\end{theorem}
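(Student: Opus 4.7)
My plan is to apply Lemma~\ref{lem1}: a symmetric matrix $G'$ is the Gram matrix of a regular two-distance tight frame for $\RR^n$ if and only if $G'$ has unit diagonal, off-diagonal entries taking two distinct values, and satisfies the operator identity $(G')^2=(m/n)G'$. Theorem~\ref{const matrix}(1) already verifies the first two conditions for the $G'$ of this theorem, so everything reduces to deciding exactly when $(G')^2=(m/n)G'$ holds, and the constraint $m=2n\pm 1$ must drop out of that.

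To prepare the computation, I would record the operator identities needed. Since $X$ is $(m/n)$-tight, $G^2=(m/n)G$; since $X$ is regular with Grammian constant $c$, $G\mathbf{1}=c\mathbf{1}$, so $GJ=JG=cJ$; and $J^2=mJ$. Hence $I$, $J$, $G$ pairwise commute, so the binomial expansion of $G'=(2-\gamma)I+\gamma J-G$ squared is valid. A short check also shows $I,J,G$ are linearly independent whenever $\alpha\neq\beta$, which justifies coefficient matching in the basis $\{I,J,G\}$.

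Next I would expand $(G')^2$ and collect coefficients of $I$, $J$, $G$, then equate with the corresponding coefficients of $(m/n)G'=(m/n)(2-\gamma)I+(m/n)\gamma J-(m/n)G$. The $I$- and $G$-coefficients both reduce to $2-\gamma=m/n$, and the $J$-coefficient (after using the first equation) reduces to $\gamma m=2c-m/n$. In Case (1), with $c=0$ and $\gamma=-2/(m-1)$, each of these two equations is equivalent to $m=2n+1$; in Case (2), with $c=m/n$ and $\gamma=2(m-n)/(n(m-1))$, each is equivalent to $m=2n-1$. The forward implication of the iff holds because $(G')^2=(m/n)G'$ forces the value of $m$; the converse holds because when $m=2n\pm 1$ the coefficient conditions are met and all three hypotheses of Lemma~\ref{lem1} are satisfied.

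For the ``moreover'' clause, Theorem~\ref{const matrix}(1) supplies the angles $\gamma-\alpha,\gamma-\beta$ and multiplicities $k_\alpha,k_\beta$ of $Y$ directly, and Theorem~\ref{const matrix}(2) identifies the Grammian constant of $Y$ as $0$ in Case (1) and $m/n$ in Case (2). Proposition~\ref{pro1} then gives that $Y$ itself is balanced in Case (1), and the earlier corollary on the Naimark complement of a regular two-distance tight frame whose Grammian constant equals $m/n$ gives that the Naimark complement of $Y$ is balanced in Case (2). The main obstacle is purely combinatorial: keeping track of the six cross-terms when squaring $G'$ and grouping them correctly along $\{I,J,G\}$ using the commutation relations $GJ=cJ$ and $G^2=(m/n)G$; once that bookkeeping is done, the theorem drops out in a few lines of coefficient matching.
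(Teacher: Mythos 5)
Your proposal is correct and follows essentially the same route as the paper: both reduce the problem via Lemma~\ref{lem1} to deciding when $(G')^2=\frac{m}{n}G'$, expand the square using $G^2=\frac{m}{n}G$, $GJ=JG=cJ$, $J^2=mJ$, and extract the condition $2-\gamma=m/n$ (equivalently $m=2n\pm1$), with the ``moreover'' clause delegated to Theorem~\ref{const matrix}. Your explicit remark that $I,J,G$ are linearly independent (needed for the ``only if'' direction of the coefficient matching) is a small point the paper leaves implicit, but the argument is the same.
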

  	\begin{proof}
  		We will prove (1). The proof for (2) is similar. By Lemma \ref{lem1}, it is enough to find conditions for which  $G'^2=\frac{m}{n}G'$. 
  		
  		We have that
  		\begin{align*}
  		G'^2&=[(2-\gamma)I+\gamma J-G]^2\\
  		&=(2-\gamma)^2I+\gamma^2J^2+G^2+2\gamma(2-\gamma)J-2(2-\gamma)G-\gamma GJ-\gamma JG.
  		\end{align*}
  		Note that $GJ=JG=(1+k_\alpha\alpha+k_\beta\beta)J=0$, $G^2=\frac{m}{n}G$, and $J^2=mJ$. Hence,
  		\begin{align*}
  		G'^2&=(2-\gamma)^2I+\gamma[\gamma m+2(2-\gamma)]J-2(2-\gamma)G+\frac{m}{n}G\\
  		&=(2-\gamma)^2I+\gamma(2-\gamma)J-(2-\gamma)\left[2-\frac{m}{n(2-\gamma)}\right]G\\
  		&=(2-\gamma)\left[(2-\gamma)I+\gamma J-\left(2-\frac{m}{n(2-\gamma)}\right)G\right].
  		\end{align*}
  		Therefore $G'^2=\frac{m}{n}G'$ if and only if $2-\gamma=m/n$ if and only if $m=2n+1$.
  		
  		The remaining conclusions follow by Theorem \ref{const matrix}.
  	\end{proof}

  	
  	From Theorem \ref{thm4}, we see that for almost $n$ and $m$, there is at most one regular two-distance tight frame for $\RR^n$ with given multiplicities.
  	\begin{theorem}	For given $n, m, m>n+1$ and an integer $k$ in $[1, m-2]$.
  		\begin{enumerate}
  			\item 	If $m\not=2n+1$, then up to unitary equivalence and reordering the frame vectors, there is at most one balanced, regular two-distance tight frame of $m$ vectors for $\RR^n$ with multiplicities $k$ and $m-k-1$.
  			\item Similarly, if $m\not=2n-1$, then up to unitary equivalence and reordering the frame vectors, there is at most one non-balanced, regular two-distance tight frame of $m$ vectors for $\RR^n$ with multiplicities $k$ and $m-k-1$.
  		\end{enumerate} 
  	\end{theorem}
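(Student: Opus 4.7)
The plan is to use Theorem~\ref{thm4} to force the angles of any qualifying frame to a unique pair given the multiplicities, and then to pin down the Gram matrix up to joint row/column permutation from the resulting combinatorial and spectral data. I will prove~(1); part~(2) is entirely analogous, swapping $\gamma = -\tfrac{2}{m-1}$ for $\gamma = \tfrac{2(m-n)}{n(m-1)}$, the system~\eqref{eq3} for~\eqref{eq4}, and the exceptional dimension $m = 2n+1$ for $m = 2n-1$.

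Let $X$ be a balanced, regular two-distance tight frame of $m$ vectors in $\RR^n$ at angles $(\alpha,\beta)$ with multiplicities $k_\alpha = k$ and $k_\beta = m - k - 1$. Being balanced forces the Grammian constant to vanish, and combined with Corollary~\ref{co1} this yields the quadratic system~\eqref{eq3}. Eliminating one variable, \eqref{eq3} has exactly two solutions, and by Theorem~\ref{const matrix}(2)--(3) these are $(\alpha, \beta)$ and $(\gamma - \alpha, \gamma - \beta)$ with $\gamma = -2/(m-1)$. Suppose toward a contradiction that a second qualifying frame $Y$ exists with the reflected angles $(\gamma - \alpha, \gamma - \beta)$ and the same multiplicities. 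Apply Theorem~\ref{thm4}(1) to $Y$: the matrix $G'_Y = (2 - \gamma)I + \gamma J - G_Y$ is a Gram matrix of a balanced regular two-distance tight frame for $\RR^n$ if and only if $m = 2n+1$. On the other hand, Theorem~\ref{const matrix} shows that $G'_Y$ has diagonal ones, off-diagonal entries in $\{\alpha, \beta\}$ with multiplicities $(k, m-k-1)$, and vanishing row sums --- the same combinatorial and row-sum data as $G_X$. Matching these data against $G_X$ (see the final step) identifies $G'_Y$ with $G_X$ up to a joint permutation of rows and columns, so $G'_Y$ is in fact a tight-frame Gram matrix, forcing $m = 2n + 1$ and a contradiction. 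Therefore the reflected pair cannot arise and the angle pair is uniquely $(\alpha, \beta)$ up to swap of $\alpha$ and $\beta$.

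To finish, I need that once $(\alpha, \beta)$ and $(k_\alpha, k_\beta)$ are fixed, any matrix $G$ satisfying $G = G^*$, $G_{ii} = 1$, off-diagonal entries in $\{\alpha, \beta\}$ with the prescribed row multiplicities, $G \mathbf{1} = 0$, and $G^2 = (m/n) G$ is determined up to joint permutation of rows and columns; translating back to frames via equality of Gram matrices, this gives the claimed uniqueness up to unitary equivalence and reordering. The main obstacle is precisely this last identification: combinatorial and row-sum data alone do not, in general, determine a Gram matrix (since strongly regular graphs with the same parameters need not be isomorphic), and one must use the full tight-frame spectral condition $G^2 = (m/n) G$ to close the gap --- either by a uniform spectral rigidity argument, or by a case analysis on the admissible parameters coming from \eqref{eq3} and \eqref{eq4}. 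Getting this rigidity step correct is the delicate part of the argument.
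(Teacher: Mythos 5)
Your first step---the angles must solve the system \eqref{eq3}, the two solutions are exchanged by the reflection $(\alpha,\beta)\mapsto(\gamma-\alpha,\gamma-\beta)$ with $\gamma=-2/(m-1)$, and Theorem \ref{thm4} excludes the reflected pair when $m\neq 2n+1$---is exactly the paper's argument. The genuine problem is the step you yourself flag as ``the delicate part'' and then leave open: that a self-adjoint matrix with unit diagonal, off-diagonal entries in $\{\alpha,\beta\}$ with row multiplicities $(k,m-k-1)$, zero row sums, and $G^2=(m/n)G$ is determined up to simultaneous permutation of rows and columns. This is not a repairable technicality. Writing $G=I+\alpha A+\beta(J-I-A)$, those conditions say precisely that $A$ is the adjacency matrix of a $k$-regular graph whose restriction to $\mathbf{1}^\perp$ has exactly two eigenvalues, i.e.\ a strongly regular graph with prescribed spectrum, and non-isomorphic cospectral strongly regular graphs exist. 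Concretely, the Shrikhande graph and the $4\times 4$ rook's graph are non-isomorphic strongly regular graphs with parameters $(16,6,2,2)$ and common spectrum $6^{1},2^{6},(-2)^{9}$; taking $\alpha=-1/3$ on edges and $\beta=1/9$ on non-edges gives, for each of the two graphs, $G=\frac{8}{9}I-\frac{4}{9}A+\frac{1}{9}J$ with eigenvalues $0^{7}$ and $(16/9)^{9}$, hence by Lemma \ref{lem1} a balanced, regular two-distance tight frame of $16$ vectors in $\RR^{9}$ with multiplicities $6$ and $9$. Since $16\neq 2\cdot 9+1$ and the two graphs are not isomorphic, these are two inequivalent frames with identical parameters: the rigidity you need is false, and in fact part (1) of the statement fails as literally written for $(n,m,k)=(9,16,6)$.

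Note also that your intermediate contradiction already leans on this same identification: you need $G'_{Y}$ to coincide with $G_{X}$ up to permutation before Theorem \ref{thm4} can be invoked, so even the weaker conclusion that the two reflected angle pairs cannot both be realized is not established by your argument. For what it is worth, the paper's own proof makes the identical leap---it asserts without justification that $(2-\gamma)I+\gamma J-G_{X}$ ``is the Gram matrix of $Y$,'' which presupposes exactly the rigidity in question. So you have correctly located where the difficulty sits; the issue is that it cannot be closed, and at best the theorem should be read as a statement about the uniqueness of the admissible angle pair (and the specific complementary matrix $G'$) rather than of the frame itself.
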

  	\begin{proof}
  			(1): We proceed by way of contradiction. Suppose $X$ and $Y$ are frames satisfying (1). Denote by $\alpha, \beta$ the angles of $X$, and $\alpha', \beta'$ the angles of $Y$. Suppose $\alpha$ and $\alpha'$ have the same multiplicities $k$. Since $X, Y$ are balanced and tight, it follows that $(\alpha, \beta)$ and $(\alpha', \beta')$ are solutions of the system of equations:
  			\[1+kx+(m-k-1)y=0 \mbox{ and } 1+kx^2+(m-k-1)y^2=\frac{m}{n}.\]
  			We can check that the point $\left(-\frac{1}{m-1},-\frac{1}{m-1}\right)$ lies on the line $1+kx+(m-k-1)y=0$ for arbitrary $k$ and this system always has two solutions. Moreover, if $(\alpha, \beta)$ is a solution of the system then $(\alpha',\beta')=\left(-\frac{2}{m-1}-\alpha,-\frac{2}{m-1}-\beta\right)$ is the remaining solution. Thus, if $G$ is the Gram matrix of $X$, then $G'=(2-\gamma)I+\gamma J-G$ is the Gram matrix of $Y$, where $\gamma=-\frac{2}{m-1}$.
  			By Theorem \ref{thm4}, we must have $m=2n+1$, which is a contradiction. So $X, Y$ cannot be both balanced, two-distance tight frames.

  				(2): In this case, $(\alpha, \beta)$ and $(\alpha', \beta')$ must be solutions of the system of equations:
  				\[1+kx+(m-k-1)y=\frac{m}{n} \mbox{ and } 1+kx^2+(m-k-1)y^2=\frac{m}{n}.\]
  				This system always has 2 solutions, and
  				\[\alpha+\alpha'=\frac{2(m-n)}{n(m-1)}, \mbox{ and }\beta+\beta'=\frac{2(m-n)}{n(m-1)}.\]
  			
  				Again, by Theorem \ref{thm4}, we have $m=2n+1$, which contradicts our assumption. This completes the proof.
  	\end{proof}

  	\begin{remark}
  		Given $n, m$ such that $m\not= 2n+1$ and $k\in [1, m-2]$. It is possible for the existence of both balanced and non-balanced, regular two-distance tight frames of $m$ vectors in $\RR^n$ with multiplicities $k$ and $m-k-1$. For instance, let $X$ be 2 copies of an orthonormal basis for $\RR^n$ and $Y$ be its Naimark complement.
  	\end{remark}
  	
\section{Construction regular two-distance sets}
Recall that Theorem \ref{thm2} gave a method for constructing a regular two-distance set from a given one. In this section, we will continue to present some other constructions of regular two-distance sets, in particular two-distance tight frames. We focus on constructing these sets with large cardinality. These constructions include one family of maximal two-distance sets that has been constructed in some previous paper, for example in \cite{BGOY}. One of the main tools we use here is from combinatorial designs. Combinatoric configurations has been used vastly in frame theory. For instance, it is well-known that equiangular tight frames can be constructed from difference sets or Steiner systems, see \cite{FMT, XZG}. Likewise, divisible difference sets and partial difference sets are used to construct biangular tight frames \cite{CFHT}. We will continue to exploit some families of block designs to construct desired sets.

	\begin{definition}
		
		A $t$-$(v, k, \lambda)$ block design, or a $t$-design for short, is a pair $(V, \mathcal{B})$ where $V$ is a $v$-set of points and $\mathcal{B}$ is a collection of $k$-subsets of $V$ (blocks) with the property that every $t$-subset of $V$ is contained in exactly $\lambda$ blocks. 
		
		 If $t=2$, then the design is called a balanced incomplete block design or BIBD. 
	\end{definition}
	
	Given a $2$-$(v, k, \lambda)$ design, each element of $V$ is contained
	in exactly $r$ blocks. It is customary to denote by $b$ the number of blocks. The
	numbers $v, b, r, k$, and $\lambda$ are parameters of the BIBD.

	The following proposition gives a few simple facts about block designs, see \cite{CD}.
	\begin{proposition}
		For any $t$-$(v, k, \lambda)$ block design, the following conditions hold:
		\begin{enumerate}
			\item $vr=bk$,
			\item $r(k-1)=\lambda(v-1)$ if $t=2$.
		\end{enumerate}
		Furthermore, a $t$-block design is also a $(t-1)$-block design for $t>1$.
	\end{proposition}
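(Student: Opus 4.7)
The plan is to prove the ``Furthermore'' statement first, since the parameter $r$ appearing in parts (1) and (2) is defined as the number of blocks through a given point, and a priori this number could depend on the point. Showing that a $t$-design is also a $(t-1)$-design cascades down to the case $t=1$, where the defining property says precisely that every point lies in a common number of blocks. Once this is established, $r$ is unambiguously defined and both (1) and (2) reduce to standard double-counting arguments.

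For the ``Furthermore'' part, fix an arbitrary $(t-1)$-subset $S \subset V$ and double-count the pairs $(y, B)$ with $y \in V \setminus S$ and $S \cup \{y\} \subset B$. Counting by $y$, there are $v - t + 1$ choices, each producing a $t$-set which by hypothesis lies in exactly $\lambda$ blocks, giving $\lambda(v - t + 1)$ pairs. Counting by $B$, each block containing $S$ contributes $k - (t-1)$ choices of $y$. Dividing, the number of blocks through $S$ equals $\lambda(v - t + 1)/(k - t + 1)$, independent of $S$. This exhibits the design as a $(t-1)$-$(v, k, \lambda')$ design with $\lambda' = \lambda(v - t + 1)/(k - t + 1)$. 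Iterating down to $t = 1$ gives the constancy of $r$.

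For part (1), double-count incidences $(p, B)$ with $p \in B$. Summing over points yields $vr$, summing over blocks yields $bk$, so $vr = bk$. For part (2), with $t = 2$, fix a point $x \in V$ and double-count pairs $(y, B)$ with $y \neq x$ and $\{x, y\} \subset B$. There are $v - 1$ choices of $y$, each producing a $2$-subset contained in $\lambda$ blocks, giving $\lambda(v-1)$ pairs. Alternatively, $x$ lies in $r$ blocks, each contributing $k - 1$ further points $y$, for a total of $r(k-1)$. Equating yields $r(k-1) = \lambda(v-1)$.

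The arguments are entirely routine; the only ``obstacle'' is organizational, namely ensuring that the ``Furthermore'' statement is proved before invoking a well-defined $r$ in parts (1) and (2). This is why I would reverse the order presented in the statement and dispatch the derived-design claim first.
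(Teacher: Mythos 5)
Your proof is correct. The paper itself gives no proof of this proposition; it simply cites the Handbook of Combinatorial Designs (\cite{CD}), so there is nothing to compare against, but your double-counting arguments are the standard ones and are complete: the count of pairs $(y,B)$ over a fixed $(t-1)$-set $S$ correctly yields $\lambda' = \lambda(v-t+1)/(k-t+1)$ blocks through $S$ independently of $S$, and parts (1) and (2) follow by the usual incidence and flag counts. Your organizational point --- establishing the $(t-1)$-design property first so that $r$ (the case $t=1$) is well defined before it is used in (1) and (2) --- is a genuine and worthwhile piece of care that the paper glosses over.
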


	A block design can be represented by a matrix called the {\it incidence matrix}. The incidence matrix of a BIBD $(V, \mathcal{B})$ with parameters $v, b, r, k, \lambda$ is a $v\times b$ matrix $A = (a_{ij})$, in which $a_{ij} = 1$ when the ith element of $V$ occurs in the jth block of $\mathcal{B}$, and $a_{ij} = 0$ otherwise.
	
	Before constructing regular two-distance sets, we will use BIBDs to construct some balanced frames and present some simple properties.   
	\begin{proposition}\label{pro3}
		Let $X=\{x_i\}_{i=1}^m$ be a balanced frame for $\RR^n$ with bounds $A$ and  $B$. Let $([m],\mathcal{B})$ be a $(m, k, \lambda)$ BIBD. For each block $\mathcal{J}\in \mathcal{B}$, define $y_{\mathcal{\mathcal{J}}}=\sum_{i\in \mathcal{J}}x_i$. Then $Y=\{y_\mathcal{J}\}_{\mathcal{J}\in \mathcal{B}}$ is a frame for $\RR^n$ with bounds $(r-\lambda)A$ and $(r-\lambda)B$. Moreover, $Y$ is balanced. 
	\end{proposition}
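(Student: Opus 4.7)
The plan is to write out the sum $\sum_{\mathcal{J}\in\mathcal{B}} |\langle x, y_\mathcal{J}\rangle|^2$ for an arbitrary $x\in\RR^n$ and reorganize it by counting how often each pair $(i,j)$ contributes, which is exactly the combinatorial content of a BIBD. Setting $a_i = \langle x, x_i\rangle$, I expand
\[
\sum_{\mathcal{J}\in\mathcal{B}}\Bigl|\sum_{i\in\mathcal{J}} a_i\Bigr|^2 = \sum_{\mathcal{J}\in\mathcal{B}}\sum_{i,j\in\mathcal{J}} a_i\overline{a_j}
\]
and swap the order of summation. The diagonal terms $|a_i|^2$ are collected according to how many blocks contain $i$, which equals $r$, and the off-diagonal terms $a_i\overline{a_j}$ ($i\ne j$) are collected according to how many blocks contain both $i$ and $j$, which by the 2-design property equals $\lambda$. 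This yields
\[
\sum_{\mathcal{J}\in\mathcal{B}}|\langle x,y_\mathcal{J}\rangle|^2 = (r-\lambda)\sum_{i=1}^m|a_i|^2 + \lambda\Bigl|\sum_{i=1}^m a_i\Bigr|^2.
\]

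The key step is now to exploit that $X$ is balanced: $\sum_{i=1}^m a_i = \langle x, \sum_{i=1}^m x_i\rangle = 0$, so the second term vanishes and we are left with $(r-\lambda)\sum_i |\langle x, x_i\rangle|^2$. Applying the frame bounds $A\|x\|^2 \le \sum_i |\langle x,x_i\rangle|^2 \le B\|x\|^2$ immediately produces the claimed frame bounds $(r-\lambda)A$ and $(r-\lambda)B$ for $Y$. (Since $r > \lambda$ for any nontrivial BIBD, the lower bound is strictly positive, so $Y$ is genuinely a frame for $\RR^n$.)

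For balancedness of $Y$, I would simply compute
\[
\sum_{\mathcal{J}\in\mathcal{B}} y_\mathcal{J} = \sum_{\mathcal{J}\in\mathcal{B}}\sum_{i\in\mathcal{J}} x_i = \sum_{i=1}^m |\{\mathcal{J}\in\mathcal{B}: i\in\mathcal{J}\}|\,x_i = r\sum_{i=1}^m x_i = 0,
\]
where the penultimate equality uses replication number $r$ and the last again uses that $X$ is balanced. I don't anticipate any real obstacle here: the only subtlety is noticing that the pair-counting identity is precisely what makes the BIBD enter cleanly, and that the balanced hypothesis on $X$ is exactly what kills the otherwise awkward $\lambda|\sum_i a_i|^2$ term.
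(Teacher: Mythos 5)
Your proposal is correct and follows essentially the same route as the paper: expand $\sum_{\mathcal{J}}|\langle x,y_{\mathcal{J}}\rangle|^2$, use the replication number $r$ for the diagonal terms and the pair-count $\lambda$ for the off-diagonal terms to obtain $(r-\lambda)\sum_i|\langle x,x_i\rangle|^2+\lambda\bigl|\langle x,\sum_i x_i\rangle\bigr|^2$, kill the second term using balancedness, and verify $\sum_{\mathcal{J}}y_{\mathcal{J}}=r\sum_i x_i=0$. No gaps; your aside that $r>\lambda$ guarantees a strictly positive lower bound is a small point the paper leaves implicit.
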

	\begin{proof}
		For any $x$ we have
		\begin{align*}
		\sum_{\mathcal{J}\in \mathcal{B}}|\langle x, y_\mathcal{J}\rangle|^2=\sum_{\mathcal{J}\in \mathcal{B}}\left(\sum_{i\in \mathcal{J}}|\langle x, x_i\rangle|^2+\sum_{i, j\in \mathcal{J}, i\not=j}\langle x, x_i\rangle\langle x, x_j\rangle\right).
		\end{align*}
		Since every element $x_i$ is contained in $r$ blocks and every 2-subset $\{x_i, x_j\}$ is contained in $\lambda$ blocks, it follows that 
		\begin{align*}
		\sum_{\mathcal{J}\in \mathcal{B}}|\langle x, y_{\mathcal{J}}\rangle|^2&=r\sum_{i=1}^{m}|\langle x, x_i\rangle|^2+\lambda\sum_{i, j=1, i\not=j}^m\langle x, x_i\rangle\langle x, x_j\rangle\\
		&=r\sum_{i=1}^{m}|\langle x, x_i\rangle|^2+\lambda\left[\left(\sum_{i=1}^{m}\langle x, x_i\rangle\right)^2-\sum_{i=1}^{m}|\langle x, x_i\rangle|^2\right]\\
		&=(r-\lambda)\sum_{i=1}^{m}|\langle x, x_i\rangle|^2+\lambda\left\langle x, \sum_{i=1}^{m}x_i\right\rangle^2\\
		&=(r-\lambda)\sum_{i=1}^{m}|\langle x, x_i\rangle|^2.
		\end{align*}
		The conclusion follows. The ``moreover" part is clear since
		$\sum_{\mathcal{J}\in \mathcal{B}}y_\mathcal{J}=r\sum_{i=1}^{m}x_i.$
	\end{proof}
	\begin{proposition}
		Let $S$ be the frame operator for a balanced frame $\{x_i\}_{i=1}^m$ and $S'$ be the frame operator for the frame $\{y_\mathcal{J}\}_{\mathcal{J}\in \mathcal{B}}$ constructed as in Proposition \ref{pro3}. Then $S'=(r-\lambda)S$.
	\end{proposition}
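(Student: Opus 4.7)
The plan is to deduce $S'=(r-\lambda)S$ directly from the identity already established inside the proof of Proposition \ref{pro3}. Recall that for any $x\in \RR^n$ the frame operators satisfy $\langle S x, x\rangle=\sum_{i=1}^m |\langle x, x_i\rangle|^2$ and $\langle S' x, x\rangle=\sum_{\mathcal{J}\in \mathcal{B}} |\langle x, y_\mathcal{J}\rangle|^2$. Proposition \ref{pro3} gives
\[
\langle S'x,x\rangle=(r-\lambda)\langle Sx,x\rangle\quad\text{for all }x\in\RR^n.
\]
Since both $S$ and $S'$ are self-adjoint operators on $\RR^n$ and agree (after the scalar $r-\lambda$) as quadratic forms, the polarization identity forces $S'=(r-\lambda)S$. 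This is a two-line argument and is the cleanest route.

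Alternatively, and to make the combinatorial role of the parameters $r$ and $\lambda$ fully transparent, I would give a direct computation. Substituting $y_\mathcal{J}=\sum_{i\in\mathcal{J}}x_i$ and expanding,
\[
S'x=\sum_{\mathcal{J}\in\mathcal{B}}\langle x,y_\mathcal{J}\rangle y_\mathcal{J}
=\sum_{\mathcal{J}\in\mathcal{B}}\sum_{i\in\mathcal{J}}\langle x,x_i\rangle x_i+\sum_{\mathcal{J}\in\mathcal{B}}\sum_{\substack{i,j\in\mathcal{J}\\ i\neq j}}\langle x,x_i\rangle x_j.
\]
Each index $i\in[m]$ lies in exactly $r$ blocks and each pair $\{i,j\}$ with $i\neq j$ lies in exactly $\lambda$ blocks, so the first sum is $rSx$ and the second equals
\[
\lambda\sum_{\substack{i,j=1\\ i\neq j}}^{m}\langle x,x_i\rangle x_j
=\lambda\!\left[\left\langle x,\sum_{i=1}^m x_i\right\rangle\!\sum_{j=1}^m x_j-Sx\right]=-\lambda Sx,
\]
where the bracketed double sum vanishes by the balanced condition $\sum_i x_i=0$. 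Combining yields $S'x=(r-\lambda)Sx$ for all $x$, hence $S'=(r-\lambda)S$.

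No real obstacle is anticipated here: the statement is essentially a bookkeeping refinement of the inequality already proved in Proposition \ref{pro3}. The only point that deserves a brief mention is that the balanced hypothesis on $X$ is what allows the off-diagonal contribution to collapse to $-\lambda S$; without it, one would pick up an additional rank-one term proportional to $\sum_j x_j$, and the conclusion would fail.
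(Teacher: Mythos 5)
Your first argument is exactly the paper's proof: it quotes the quadratic-form identity $\langle S'x,x\rangle=(r-\lambda)\langle Sx,x\rangle$ established in Proposition \ref{pro3} and concludes $S'=(r-\lambda)S$ from self-adjointness. The alternative direct expansion is also correct but is just a restatement of the same combinatorial computation already carried out inside the proof of Proposition \ref{pro3}, so nothing new is needed.
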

	\begin{proof}
		By Proposition \ref{pro3}, for any $x$ we have that
		\begin{align*}
		\langle S'x, x\rangle=\sum_{\mathcal{J}\in \mathcal{B}}|\langle x, y_{\mathcal{J}}\rangle|^2&=(r-\lambda)\sum_{i=1}^m|\langle x, x_i\rangle|^2\\
		&=(r-\lambda)\langle Sx, x\rangle=\langle(r-\lambda)Sx,x\rangle.
		\end{align*}
		This implies the desired claim.

	\end{proof}
	\begin{corollary}
		If $\{x_i\}_{i=1}^m$ is a balanced $A$-tight frame, then $\{y_\mathcal{J}\}_{\mathcal{J}\in \mathcal{B}}$ is a balanced $(r-\lambda)A$-tight frame.
	\end{corollary}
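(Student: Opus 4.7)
The plan is to invoke the two immediately preceding results and observe that the conclusion is essentially a one-line corollary. First I would note that by the previous proposition, the frame operator $S'$ of $\{y_\mathcal{J}\}_{\mathcal{J}\in\mathcal{B}}$ is related to the frame operator $S$ of $\{x_i\}_{i=1}^m$ by the identity $S' = (r-\lambda)S$.

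Next, I would use the hypothesis that $\{x_i\}_{i=1}^m$ is $A$-tight, which is equivalent to $S = A\cdot I$. Substituting this into the identity above yields $S' = (r-\lambda)A \cdot I$, which by definition means $\{y_\mathcal{J}\}_{\mathcal{J}\in\mathcal{B}}$ is a $(r-\lambda)A$-tight frame.

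Finally, for the balanced part, I would simply appeal to Proposition \ref{pro3}, which already established that $Y=\{y_\mathcal{J}\}_{\mathcal{J}\in\mathcal{B}}$ is balanced whenever $X=\{x_i\}_{i=1}^m$ is (via the identity $\sum_{\mathcal{J}\in\mathcal{B}}y_\mathcal{J} = r\sum_{i=1}^{m}x_i = 0$). There is essentially no obstacle here — the entire content has been done in the two preceding results, and this corollary is just the specialization to the tight case.
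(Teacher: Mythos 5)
Your proposal is correct and matches the paper's intent exactly: the paper states this corollary without proof precisely because it follows immediately from the identity $S'=(r-\lambda)S$ together with $S=A\cdot I$, and the balancedness was already established in Proposition \ref{pro3} via $\sum_{\mathcal{J}\in\mathcal{B}}y_\mathcal{J}=r\sum_{i=1}^{m}x_i$. Nothing is missing.
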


A special type of $t$-designs is the so called quasi-symmetric designs. These designs have the property that the cardinality of the intersection of any two blocks of the designs are either $x$ or $y$. They have been studied extensively due to their connections with strongly regular graphs.

	\begin{definition}
		A $t$-$(v, k, \lambda)$ block design $(V, \mathcal{B})$ is quasi-symmetric with intersection numbers $x$ and $y$ if any two blocks of $\mathcal{B}$ intersect in either $x$ or $y$ points.
	\end{definition}
	\begin{definition}
		The block graph $\Gamma$ of quasi-symmetric $2$-$(v, k, \lambda)$ design $(V, \mathcal{B})$ with intersection numbers $x$ and $y$ $(x<y)$ is the graph with vertex set being the blocks of $\mathcal{B}$, and where blocks $\mathcal{J}$ and $\mathcal{J}'$ are adjacent if and only if
		$|\mathcal{J} \cap \mathcal{J}'|=y$.
	\end{definition}
	
	It is well known that the block graph of a quasi-symmetric
	design is a strongly regular graph \cite{IS}. 
	
	\begin{theorem} \label{SRG}
		Let $(V, \mathcal{B})$ be a $2$-$(v, b, r, k, \lambda)$ quasi-symmetric design with intersection numbers $x$ and $y$, where $x<y$. Then $\Gamma$ is a strongly regular graph $(n, s, \mu_1, \mu_2)$. The parameters of $\Gamma$ are given by 
		\[n=b,\quad s=\dfrac{k(r-1)-x(b-1)}{y-x},\quad \mu_1=s+\theta_1+\theta_2+\theta_1\theta_2,  \quad \mu_2=s+\theta_1\theta_2,\] where
		\[\theta_1=\frac{r-\lambda-k+x}{y-x}, \quad
		\theta_2=\frac{x-k}{y-x}.\]
	\end{theorem}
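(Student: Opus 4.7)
The plan is to analyze $\Gamma$ through the incidence matrix of the design and read off the parameters from the spectrum of its adjacency matrix $A$. Let $N$ denote the $v \times b$ incidence matrix of $(V, \mathcal{B})$. The $2$-design property gives the familiar identity $NN^T = (r - \lambda) I + \lambda J$, while the quasi-symmetric condition together with the definition of $\Gamma$ gives
\[ N^T N = k I + x(J - I - A) + y A = (k - x) I + x J + (y - x) A.\]
In particular $A$ is recovered linearly from $N^T N$, $I$, and $J$. That $\Gamma$ has $n = b$ vertices is immediate from the definition.

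For the common degree, I would fix a block $\mathcal{J}$ and count pairs $(p, \mathcal{J}')$ with $p \in \mathcal{J} \cap \mathcal{J}'$ and $\mathcal{J}' \neq \mathcal{J}$: each of the $k$ points of $\mathcal{J}$ lies in $r - 1$ other blocks, while grouping by $\mathcal{J}'$ and invoking the intersection dichotomy gives $sy + (b - 1 - s) x$. Solving $k(r-1) = sy + (b - 1 - s) x$ yields exactly the claimed formula for $s$. To obtain $\theta_1, \theta_2$, the idea is to diagonalize $N^T N$. Since $NN^T$ and $N^T N$ share the same nonzero spectrum with equal multiplicities, and $NN^T$ has eigenvalue $rk$ on $\mathbf{1}_v$ and $r - \lambda$ on the $(v-1)$-dimensional subspace $\mathbf{1}_v^\perp$, the matrix $N^T N$ has eigenvalues $rk$ (simple, with eigenvector $\mathbf{1}_b$), $r - \lambda$ (with multiplicity $v - 1$), and $0$ with multiplicity $b - v$. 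Plugging these into the rearranged identity, and using that $J$ annihilates $\mathbf{1}_b^\perp$, shows that $A$ has eigenvalue $s$ on $\mathbf{1}_b$, $\theta_1 = (r - \lambda - k + x)/(y - x)$ on a space of dimension $v - 1$, and $\theta_2 = (x - k)/(y - x)$ on a space of dimension $b - v$.

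Finally, to promote this spectral information to strong regularity, I would invoke the standard fact that a regular graph is strongly regular precisely when its adjacency matrix has at most three distinct eigenvalues. Since $A$ is $s$-regular and has only $\theta_1, \theta_2$ as eigenvalues on $\mathbf{1}_b^\perp$, it satisfies $(A - \theta_1 I)(A - \theta_2 I) = \kappa J$ on all of $\RR^b$ for the scalar $\kappa$ determined by evaluating on $\mathbf{1}_b$. Comparing the expansion with the defining SRG relation $A^2 = s I + \mu_1 A + \mu_2 (J - I - A)$ gives $\theta_1 + \theta_2 = \mu_1 - \mu_2$ and $\theta_1 \theta_2 = \mu_2 - s$, whence $\mu_2 = s + \theta_1 \theta_2$ and $\mu_1 = s + \theta_1 + \theta_2 + \theta_1 \theta_2$ as claimed. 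The main difficulty is the bookkeeping in the middle step: correctly identifying the three orthogonal eigenspaces of $N^T N$ and checking that their dimensions sum to $b$; once those are in place the rest is elementary algebra.
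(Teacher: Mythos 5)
Your argument is correct, but note that the paper does not actually prove this theorem: it is quoted as a known result with a citation to Ionin--Shrikhande, so there is no in-paper proof to compare against. What you have written is essentially the standard textbook derivation from that literature: the identity $N^TN=(k-x)I+xJ+(y-x)A$, the double count $k(r-1)=sy+(b-1-s)x$ for regularity, the transfer of the spectrum of $NN^T=(r-\lambda)I+\lambda J$ (eigenvalues $rk$, $r-\lambda$, $0$ with multiplicities $1$, $v-1$, $b-v$) to $N^TN$, and the resulting quadratic relation $(A-\theta_1I)(A-\theta_2I)=\kappa J$ matched against $A^2=sI+\mu_1A+\mu_2(J-I-A)$. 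All the computations check out, including $\mu_2=s+\theta_1\theta_2$ and $\mu_1=\mu_2+\theta_1+\theta_2$. Two small points you may want to make explicit rather than leave implicit: (i) for a quasi-symmetric design with two genuinely distinct intersection numbers one has $b>v$ and $r>\lambda$, so both eigenspaces on $\mathbf{1}_b^\perp$ are nonempty and the three eigenvalues are genuinely present; and (ii) you do not actually need the black-box statement ``regular with three eigenvalues implies strongly regular,'' since your derivation of the quadratic identity for $A$ already yields the combinatorial SRG relation directly by reading off the $(i,j)$ entries of $A^2$.
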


	Thus, given a 2-$(v, b, r, k, \lambda)$ quasi-symmetric design $(V, \mathcal{B})$ with intersection numbers $x$ and $y$, where $x < y$. Then for each block $\mathcal{J}$ of $\mathcal{B}$,
	there are exactly $s$ blocks of $\mathcal{B}$ for which each of them intersects $\mathcal{J}$ at $y$ points. This result ensures the regularity of two-distance frames in the following construction.

	\begin{theorem}\label{construction1}
		Let $([n], \mathcal{B})$ be a $2$-$(n, b, r, k, \lambda)$ quasi-symmetric  design with intersection numbers $x$ and $y$. Let $s$ be the number defined as in Theorem \ref{SRG}. Then we have the following constructions of regular two-distance frames. 
		\begin{enumerate}
			\item Let $\{e_i\}_{i=1}^n$ be the standard orthonormal basis for $\RR^n$. For each block $\mathcal{J}\in \mathcal{B}$, define a vector $x_{\mathcal{J}}$ to be 
			\[x_{\mathcal{J}}=\dfrac{1}{\sqrt{k}}\sum_{i\in \mathcal{J}}e_i.\]
			Then the family $\{x_{\mathcal{J}}\}_{\mathcal{J}\in \mathcal{B}}$ forms a regular two-distance frame for  $\RR^n$ at angles $x/k$ and $y/k$ with multiplicities $b-s-1$ and $s$, respectively. This frame is not tight.
			\item Similarly, let $\{\varphi_i\}_{i=1}^{n}$ be a simplex for $\RR^{n-1}$. For each block $\mathcal{J}\in \mathcal{B}$, define a vector $x_\mathcal{J}$ to be 
			\[x_{\mathcal{J}}=\sqrt{\dfrac{n-1}{k(n-k)}}\sum_{i\in \mathcal{J}}\varphi_i\]
			Then the family $\{x_{\mathcal{J}}\}_{\mathcal{J}\in\mathcal{B}}$ forms a balanced, two-distance tight frame for $\RR^{n-1}$ at angles $\dfrac{xn-k^2}{k(n-k)}$ and $\dfrac{yn-k^2}{k(n-k)}$ with multiplicities $b-s-1$ and $s$, respectively.
		\end{enumerate}
	\end{theorem}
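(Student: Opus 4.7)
The plan is to verify both constructions by direct computation, exploiting the orthogonality (respectively simplex) relations of the chosen ambient basis together with the combinatorial identities of a BIBD. All the necessary counts for multiplicities are already encoded in Theorem \ref{SRG}.

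For part (1), I would first observe that each $x_\mathcal{J}$ is a unit vector, since $\|x_\mathcal{J}\|^2=\tfrac{1}{k}\sum_{i\in \mathcal{J}}\|e_i\|^2=1$ by orthonormality of the standard basis. For distinct blocks, orthonormality also gives
\[\langle x_\mathcal{J}, x_{\mathcal{J}'}\rangle = \tfrac{1}{k}|\mathcal{J}\cap \mathcal{J}'| \in \{x/k,\, y/k\},\]
so the family is two-distance. Theorem \ref{SRG} tells us that, for each fixed block $\mathcal{J}$, there are exactly $s$ other blocks meeting it in $y$ points, and thus $b-1-s$ meeting it in $x$ points, which gives the regularity and the stated multiplicities. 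To see that $\{x_\mathcal{J}\}$ is a frame and is \emph{not} tight, I would compute the frame operator directly from the incidence matrix $A$: the synthesis matrix is $\tfrac{1}{\sqrt{k}}A$, so $S=\tfrac{1}{k}AA^{*}=\tfrac{1}{k}\bigl((r-\lambda)I+\lambda J\bigr)$ using the standard BIBD identity. This matrix is full rank (its two eigenvalues $(r-\lambda)/k$ and $rk/k=r$ are both positive), so $\{x_\mathcal{J}\}$ spans $\RR^n$; but it is not a multiple of $I$ whenever $\lambda\ge 1$, hence the frame is non-tight.

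For part (2), I would use the standard facts that a simplex $\{\varphi_i\}_{i=1}^n\subset \RR^{n-1}$ consists of unit vectors with $\langle \varphi_i,\varphi_j\rangle = -\tfrac{1}{n-1}$ for $i\neq j$, satisfies $\sum_i \varphi_i=0$, and is a balanced $\tfrac{n}{n-1}$-tight frame. A direct expansion gives
\[\|x_\mathcal{J}\|^2 = \tfrac{n-1}{k(n-k)}\Bigl(k-\tfrac{k(k-1)}{n-1}\Bigr)=1,\]
and for distinct blocks the analogous computation (separating diagonal and off-diagonal contributions in the double sum $\sum_{i\in \mathcal{J},j\in \mathcal{J}'}\langle \varphi_i,\varphi_j\rangle$) yields
\[\langle x_\mathcal{J}, x_{\mathcal{J}'}\rangle = \tfrac{n|\mathcal{J}\cap \mathcal{J}'|-k^2}{k(n-k)},\]
which is exactly one of the two claimed angles, again with multiplicities $b-1-s$ and $s$ by Theorem \ref{SRG}. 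Balance is immediate since $\sum_{\mathcal{J}}x_\mathcal{J}=\sqrt{\tfrac{n-1}{k(n-k)}}\,r\sum_i \varphi_i=0$.

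For tightness I would apply the Corollary following Proposition \ref{pro3} to the balanced tight simplex: the unscaled sums $\sum_{i\in\mathcal{J}}\varphi_i$ form a balanced $(r-\lambda)\tfrac{n}{n-1}$-tight frame, and multiplying by the scalar $\sqrt{\tfrac{n-1}{k(n-k)}}$ gives a unit-norm tight frame with frame bound $\tfrac{n(r-\lambda)}{k(n-k)}$. Using $r-\lambda=\tfrac{r(n-k)}{n-1}$ (from $r(k-1)=\lambda(n-1)$) and $b=nr/k$ (from $vr=bk$), this bound simplifies to $\tfrac{b}{n-1}$, which is exactly the required tight frame constant for $b$ unit vectors in $\RR^{n-1}$. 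The whole argument is routine; the only bookkeeping step that deserves care is the conversion of the constants $r,\lambda,k,n$ into the frame bound, which is where the two BIBD identities are used simultaneously.
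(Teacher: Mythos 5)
Your proposal is correct and follows essentially the same route as the paper: unit norms and the two angles by direct expansion, regularity and the multiplicities $b-s-1$ and $s$ from Theorem \ref{SRG}, the frame property in (1) from $kFF^{*}=(r-\lambda)I+\lambda J$, and tightness and balance in (2) via Proposition \ref{pro3} applied to the simplex. The only point of divergence is how non-tightness is certified in (1): you read it off directly from the frame operator not being a multiple of $I$ (since $\lambda\geq 1$), whereas the paper computes the Grammian constant of the set to be $r\neq b/n$ and invokes Proposition \ref{pro2}; both arguments are valid and yours is, if anything, slightly more elementary.
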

	\begin{proof}
		(1): It is clear that $x_{\mathcal{J}}$'s are unit norm vectors. If $|B_\mathcal{J}\cap B_\mathcal{J'}|=x$, then 
		\[\langle x_\mathcal{J}, x_\mathcal{J'}\rangle=\frac{1}{k}|\mathcal{J}\cap \mathcal{J'}|=\dfrac{x}{k}.\] Similarly, if $|\mathcal{J}\cap \mathcal{J'}|=y$, then 
		$\langle x_\mathcal{J}, x_\mathcal{J'}\rangle=\dfrac{y}{k}.$ Thus, $\{x_\mathcal{J}\}_{\mathcal{J}\in \mathcal{B}}$ is a two-distance set at angles $x/k$ and $y/k$. Moreover, by Theorem \ref{SRG}, it is regular and multiplicities of angles $x/k$ and $y/k$ are $b-s-1$ and $s$, respectively.
		
		Now we will show that $\{x_\mathcal{J}\}_{\mathcal{J}\in \mathcal{B}}$ is actually a frame for $\RR^n$. Let $F$ be its synthesis operator. Note that $\sqrt{k}F$ is the incidence matrix of the design. Hence,
		\[kFF^*=(r-\lambda)I+\lambda J.\]
	It follows that 
		\[\det(kFF^*)=(r-\lambda+n\lambda)(r-\lambda)^{n-1}>0.\] So $\{x_\mathcal{J}\}_{\mathcal{J}\in \mathcal{B}}$ is a frame for $\RR^n$. To see this frame is not tight, we compute its Grammian constant.
		\begin{align*}
		1+s\dfrac{y}{k}+(b-s-1)\dfrac{x}{k}&=1+\dfrac{k(r-1)-x(b-1)}{y-x}.\dfrac{y}{k}+\left[b-1-\dfrac{k(r-1)-x(b-1)}{y-x}\right]\dfrac{x}{k}\\
		&=1+\dfrac{y(r-1)}{y-x}-\dfrac{xy(b-1)}{k(y-x)}+\left[\dfrac{y(b-1)-k(r-1)}{y-x}\right]\dfrac{x}{k}\\
		&=1+\dfrac{y(r-1)}{y-x}-\dfrac{x(r-1)}{y-x}\\
		&=r\not=\dfrac{b}{n}.
		\end{align*}
		So this frame is not tight by Proposition \ref{pro2}.

		(2): Note that $\langle \varphi_i, \varphi_j\rangle=-\frac{1}{n-1}$, for $i\not=j$. We compute
		\[\langle \sum_{i\in \mathcal{J}}\varphi_i, \sum_{j\in \mathcal{J}}\varphi_j\rangle=\sum_{i=1}^k\|\varphi_i\|^2+2\sum_{i< j}\langle \varphi_i, \varphi_j\rangle=k-\dfrac{k(k-1)}{n-1}=\dfrac{k(n-k)}{n-1}.\]
		So $\{x_\mathcal{J}\}_{\mathcal{J}\in \mathcal{B}}$ is a set of unit norm vectors. 
		
		If $|\mathcal{J}\cap \mathcal{J'}|=x$, then 
		\begin{align*}
		\langle x_\mathcal{J}, x_\mathcal{J'}\rangle&=\dfrac{n-1}{k(n-k)}\langle \sum_{i\in \mathcal{J}}\varphi_i, \sum_{j\in \mathcal{J'}}\varphi_j\rangle\\
		&=\dfrac{n-1}{k(n-k)}\left[x-(k^2-x)\frac{1}{n-1}\right].\\
		&=\dfrac{xn-k^2}{k(n-k)}.
		\end{align*}
		
		Similarly, if $|\mathcal{J}\cap \mathcal{J'}|=y$, then 	
		$$\langle x_\mathcal{J}, x_\mathcal{J'}\rangle=\dfrac{yn-k^2}{k(n-k)}.$$
		Thus, $\{x_\mathcal{J}\}_{\mathcal{J}\in \mathcal{B}}$ is a regular two-distance set at angles and multiplicities as in the claim of the theorem. Moreover, since $\{\varphi_i\}_{i=1}^{n}$ is a simplex, it follows that  $\{x_\mathcal{J}\}_{\mathcal{J}\in \mathcal{B}}$ is a balanced tight frame by Proposition \ref{pro3}.
	\end{proof}
	
	In the following, we will construct some infinite families of regular two-distance frames with large cardinalities.
	\begin{example}\label{ex2}
		For $n\in \mathbb{N}$, let $V=[n]$ and $\mathcal{B}$ be the set of all 2-element subsets of $V$. Then $(V, \mathcal{B})$ is a quasi-symmetric design with parameters
		\[v=n, b=\frac{(n-1)n}{2}, r=n-1, k=2, \lambda=1 ,\]
		and intersection numbers $x=0, y=1$.
		
		By (1) of Theorem \ref{construction1}, we get a non-tight regular two-distance frame of $m=\frac{(n-1)n}{2}$ vectors for $\RR^n$ at angles $\alpha=0$ and $\beta=1/2$, with multiplicities $\frac{n^2-5n+6}{2}$ and $2n-4$, respectively. This frame achieves the upper bound for cardinality of regular two-distance sets which have two non-negative angles as shown in Corollary \ref{posi angles}.
		
		On the other hand, if we apply (2) of Theorem \ref{construction1}, we will get a balanced, two-distance tight frame of $\frac{(n-1)n}{2}$ vectors for $\RR^{n-1}$. Its angles are $-\frac{2}{n-2}$ and $\frac{n-4}{2(n-2)}$ with respective multiplicities $\frac{n^2-5n+6}{2}$ and $2n-4$. This is a maximal two-distance set if $n\not=(2\ell+1)^2-2$ for all $\ell\in \mathbb{N}$.
	\end{example}

We have seen that we can use quasi-symmetric designs to construct regular two-distance frames. Now we will see that the frames constructed in this way have interesting structure. 

\begin{theorem}\label{structure}
	Let $X=\{x_i\}_{i=1}^m$ be the regular two-distance frame constructed by (1) or (2) of Theorem \ref{construction1} at angles $\alpha$ and $\beta.$ Denote by $\mathcal{I}^\alpha$ and  $\mathcal{I}^\beta$ the sets:
	\[\mathcal{I}^\alpha=\{i\in [m]: \langle x_i, x_1\rangle=\alpha\}, \quad \mathcal{I}^\beta=\{i\in [m]: \langle x_i, x_1\rangle=\beta\}.\]
	Then the either set $Y=\{x_i\}_{i\in \mathcal{I}^\alpha}$ (similarly, the set $Z=\{x_i\}_{i\in \mathcal{I}^\beta}$) is a regular two-distance set at angles $\alpha$ and $\beta$ or it satisfies $\langle x_i, x_j\rangle=\alpha$, for all $i, j\in \mathcal{I}^\alpha, i\not=j$ (respectively, $\langle x_i, x_j\rangle=\beta$, for all $i, j\in \mathcal{I}^\beta, i\not=j$). 
\end{theorem}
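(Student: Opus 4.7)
The plan is to reduce the claim to the strong regularity of the block graph $\Gamma$, which is guaranteed by Theorem \ref{SRG}. In both constructions of Theorem \ref{construction1}, whether $\langle x_{\mathcal{J}}, x_{\mathcal{J}'}\rangle$ equals $\alpha$ or $\beta$ is determined solely by the intersection size $|\mathcal{J} \cap \mathcal{J}'|$ (equal to $x$ or to $y$, respectively). Indexing the vectors by blocks and letting $\mathcal{J}_1$ be the block with $x_{\mathcal{J}_1}=x_1$, the set $\mathcal{I}^\beta$ is exactly the neighborhood $N(\mathcal{J}_1)$ of $\mathcal{J}_1$ in $\Gamma$, while $\mathcal{I}^\alpha$ consists of the blocks in $\mathcal{B}\setminus(N(\mathcal{J}_1)\cup\{\mathcal{J}_1\})$.

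Next, I invoke the defining property of an SRG with parameters $(n,s,\mu_1,\mu_2)$. Any vertex $u\in N(\mathcal{J}_1)$ shares exactly $\mu_1$ common neighbors with $\mathcal{J}_1$, all of which must lie in $N(\mathcal{J}_1)$; hence $u$ has $\mu_1$ neighbors inside $N(\mathcal{J}_1)\setminus\{u\}$ and $s-1-\mu_1$ non-neighbors there. In terms of inner products, every $x_i\in Z$ has inner product $\beta$ with $\mu_1$ other members of $Z$ and inner product $\alpha$ with the remaining $s-1-\mu_1$. So $Z$ is regular, and it is a genuine two-distance set unless $\mu_1=s-1$, in which case every pairwise inner product in $Z$ equals $\beta$. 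The parallel argument applied to a non-neighbor $u$ of $\mathcal{J}_1$, which shares $\mu_2$ common neighbors with $\mathcal{J}_1$ (all in $N(\mathcal{J}_1)$) and hence has $s-\mu_2$ neighbors in $\mathcal{B}\setminus(N(\mathcal{J}_1)\cup\{\mathcal{J}_1\})$, shows that $Y$ is regular, with each vector having $s-\mu_2$ others at inner product $\beta$; this is a genuine two-distance set unless $s-\mu_2=0$, in which case every pairwise inner product in $Y$ equals $\alpha$.

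The main obstacle is largely bookkeeping rather than conceptual: one must verify in both constructions (1) and (2) that the ordering $\alpha<\beta$ matches the intersection sizes $x<y$, so that adjacency in $\Gamma$ (which by definition corresponds to intersection $y$) always translates to inner product $\beta$ and my identification $\mathcal{I}^\beta=N(\mathcal{J}_1)$ is correct. A quick check of the formulas $\alpha=x/k$, $\beta=y/k$ in (1) and $\alpha=(xn-k^2)/(k(n-k))$, $\beta=(yn-k^2)/(k(n-k))$ in (2) confirms this, since both expressions are strictly increasing in the intersection size. Once this alignment is in place, the theorem is essentially a restatement of the well-known local regularity of a strongly regular graph.
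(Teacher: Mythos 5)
Your proof is correct and follows essentially the same route as the paper: identify the frame vectors with the vertices of the block graph, note that $\mathcal{I}^\beta$ (resp. $\mathcal{I}^\alpha$) is the neighborhood (resp. non-neighborhood) of the distinguished vertex, and read off the row counts from the strong-regularity parameters $\mu_1$ and $\mu_2$ of Theorem \ref{SRG}. Your explicit check that the larger intersection number corresponds to the larger angle in both constructions is a small extra care the paper handles with a ``we can assume'' but does not change the argument.
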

\begin{proof}
		Let $([n], \mathcal{B})$ be the $2$-$(n, b, r, k, \lambda)$ quasi-symmetric  design with intersection numbers $x$ and $y$ used to construct the frame $X$.  By the construction of $X$, we can associate each frame vector to the corresponding vertex of the block graph of the design. We will say that two vectors are adjacent if their associated vertexes are adjacent. As we have seen in the proof of Theorem \ref{construction1}, the angles $\alpha$ and $\beta$ are determined by the intersection numbers $x$ and $y$. We can assume that $\alpha$ is determined by $y$. Thus, the set $\{x_i\}_{i\in\mathcal{I}^{\alpha}}$ contains all adjacent vectors of $x_1$. By Theorem \ref{SRG}, the block graph is strongly regular with parameters $(b, s, \mu_1, \mu_2)$. It follows that the set $\{x_i\}_{i\in \mathcal{I}^\alpha}$ has $s$ elements and each row of the Gram matrix of $Y$ has exactly $\mu_1$ elements $\alpha$. Thus, if $\mu_1<s-1$, then $X$ is a regular two-distance set, otherwise, we have that $\langle x_i, x_j\rangle=\alpha$, for all $i, j\in \mathcal{I}^\alpha, i\not=j$. With a similar argument, we get the claim for the set $Z$. This completes the proof.	
\end{proof}
\begin{example}\label{new}
	Let $n=5$ and let $X=\{x_i\}_{i=1}^{10}$ be the balanced, two-distance tight frame for $\RR^4$ constructed by Example \ref{ex2}. Then the angles of $X$ are $1/6$ and $-2/3$ with multiplicities $6$ and $3$, respectively. If we denote by $\{\varphi_i\}_{i=1}^5$ the simplex in $\RR^4$ and enumerate the vectors of $X$ as 
	\[x_1=\frac{\sqrt{6}}{3}(\varphi_1+\varphi_2), \quad  x_2=\frac{\sqrt{6}}{3}(\varphi_1+\varphi_3),\quad x_3=\frac{\sqrt{6}}{3}(\varphi_1+\varphi_4),\quad x_4=\frac{\sqrt{6}}{3}(\varphi_1+\varphi_5),\]
	\[x_5=\frac{\sqrt{6}}{3}(\varphi_2+\varphi_3),\quad x_6=\frac{\sqrt{6}}{3}(\varphi_2+\varphi_4),\quad
	x_7=\frac{\sqrt{6}}{3}(\varphi_2+\varphi_5), \quad  x_8=\frac{\sqrt{6}}{3}(\varphi_3+\varphi_4),\] \[x_9=\frac{\sqrt{6}}{3}(\varphi_3+\varphi_5),\quad
	x_{10}=\frac{\sqrt{6}}{3}(\varphi_4+\varphi_5),\]
	then the Gram matrix of $X$ is as follows.
	\[\begin{bmatrix}
	1&1/6&1/6&1/6&1/6&1/6&1/6&-2/3&-2/3&-2/3\\
	1/6&1&1/6&1/6&1/6&-2/3&-2/3&1/6&1/6&-2/3\\
	1/6&1/6&1&1/6&-2/3&1/6&-2/3&1/6&-2/3&1/6\\
	1/6&1/6&1/6&1&-2/3&-2/3&1/6&-2/3&1/6&1/6\\
	1/6&1/6&-2/3&-2/3&1&1/6&1/6&1/6&1/6&-2/3\\
	1/6&-2/3&1/6&-2/3&1/6&1&1/6&1/6&-2/3&1/6\\
	1/6&-2/3&-2/3&1/6&1/6&1/6&1&-2/3&1/6&1/6\\
	-2/3&1/6&1/6&-2/3&1/6&1/6&-2/3&1&1/6&1/6\\
	-2/3&1/6&-2/3&1/6&1/6&-2/3&1/6&1/6&1&1/6\\
	-2/3&-2/3&1/6&1/6&-2/3&1/6&1/6&1/6&1/6&1
	\end{bmatrix} 
	\]
	Let $Y=\{x_i\}_{i=2}^7$ and $Z=\{x_i\}_{i=8}^{10}$. Then $Y$ is a regular two-distance set at angles $1/6$ and $-2/3$ with respective multiplicities $3$ and $2$. The set $Z$ satisfies $\langle x_i, x_j\rangle=1/6$ for $i\not=j$.
\end{example}
\begin{remark} Not every regular two-distance set has the property in Theorem \ref{structure}. We will see this by the following example.
\end{remark}
\begin{example}
	Let the set of vectors $X=\{x_i\}_{i=1}^8$ to be the columns of the matrix:
		\[\begin{bmatrix}
		\sqrt{6}/3&0&-\sqrt{6}/3&0&0&0&0&0\\
		-\sqrt{3}/3&\sqrt{3}/3&-\sqrt{3}/3&\sqrt{3}/3&-\sqrt{3}/3&\sqrt{3}/3&-\sqrt{3}/3&\sqrt{3}/3\\
		0&\sqrt{6}/3&0&-\sqrt{6}/3&0&0&0&0\\
		0&0&0&0&\sqrt{6}/3&0&-\sqrt{6}/3&0\\
		0&0&0&0&0&\sqrt{6}/3&0&-\sqrt{6}/3
		\end{bmatrix} \]
\end{example}
The Gram matrix of $X$ is
	\[\begin{bmatrix}
	1&-1/3&-1/3&-1/3&1/3&-1/3&1/3&-1/3\\
	-1/3&1&-1/3&-1/3&-1/3&1/3&-1/3&1/3\\
	-1/3&-1/3&1&-1/3&1/3&-1/3&1/3&-1/3\\
	-1/3&-1/3&-1/3&1&-1/3&1/3&-1/3&1/3\\
	1/3&-1/3&1/3&-1/3&1&-1/3&-1/3&-1/3\\
	-1/3&1/3&-1/3&1/3&-1/3&1&-1/3&-1/3\\
	1/3&-1/3&1/3&-1/3&-1/3&-1/3&1&-1/3\\
	-1/3&1/3&-1/3&1/3&-1/3&-1/3&-1/3&1
	\end{bmatrix} \]
Then $$Y=\{x_i : \langle x_i, x_1\rangle=-1/3\}=\{x_2, x_3, x_4, x_6, x_8\}.$$ From the Gram matrix of $X$, we see that $X$ is a regular two-distance set but $Y$ is not regular.

It has been seen that all known maximal spherical two-distances set are tight frames. However, in the following, we will give for the first time an example of a non-tight maximal two-distance frame. 
\begin{example}
	Let $X=\{x_i\}_{i=1}^{10}$ be the frame for $\RR^4$ given in Example \ref{new}. Let $P$ be the orthogonal projection onto $x_1$. Then 
	\[Px_i=\langle x_i, x_1\rangle x_1=1/6x_1, \mbox{ for } i=2, 3, \ldots,7.\]
	For $i, j =2, 3, \ldots, 7$, we have
	\[\langle(I-P)x_i, (I-P)x_j\rangle=\langle x_i, x_j\rangle-\langle Px_i, Px_j\rangle=\langle x_i, x_j\rangle-1/36.\]
	This implies that for all $i=2, 3, \ldots, 7$, $y_i:=\frac{(I-P)x_i}{\|(I-P)x_i\|}=\sqrt{\frac{36}{35}}(I-P)x_i$ is a unit norm vector in $\RR^3$. Moreover
	\[\langle y_i, y_j\rangle=\dfrac{36}{35}\left(\langle x_i, x_j\rangle-\dfrac{1}{36}\right).\]
	From the Gram matrix of $X$, we see that $Y=\{y_i\}_{i=2}^7$ is regular two-distance frame for $\RR^3$ at angles $1/7$ and $-5/7$ with respective multiplicities $3$ and $2$. By Corollary \ref{co1}, 
	$Y$ is not tight since
	\[1+3(1/7)^2+2(-5/7)^2=102/49\not=6/3.\] 
	
	In order to see the frame vectors of $Y$, we can use the simplex in $\RR^4$ (the columns are the vectors, see the construction of the simplex in \cite{MP}):
	\[\begin{bmatrix}
	-\sqrt{10}/4&\sqrt{10}/4&0&0&0\\
	-\sqrt{30}/12&-\sqrt{30}/12&\sqrt{30}/6&0&0\\
	-\sqrt{15}/12&-\sqrt{15}/12&-\sqrt{15}/12&\sqrt{15}/4&0\\
	-1/4&-1/4&-1/4&-1/4&1\\
	\end{bmatrix} 
	\] to construct the frame vectors of $X$.
	Then we get the following frame $Y$:
	\[\begin{bmatrix}
	-\sqrt{21}/7&-\sqrt{21}/7&-\sqrt{21}/7&\sqrt{21}/7&\sqrt{21}/7&\sqrt{21}/7\\
	4\sqrt{7}/21&-2\sqrt{7}/21&-2\sqrt{7}/21&4\sqrt{7}/21&-2\sqrt{7}/21&-2\sqrt{7}/21\\
	-5\sqrt{14}/42&\sqrt{14}/6&-\sqrt{14}/21&-5\sqrt{14}/42&\sqrt{14}/6&-\sqrt{14}/21\\
	-\sqrt{210}/42&-\sqrt{210}/42&\sqrt{210}/21&-\sqrt{210}/42&-\sqrt{210}/42&\sqrt{210}/21
	\end{bmatrix} 
	\]
	These column vectors are perpendicular to the row vector: $$(0, -\sqrt{30}, -\sqrt{15}, -3)$$ so they are in $\RR^3$. This frame is a maximal, regular two-distance set, summing to zero but it is not tight. 
\end{example}

	The following theorem gives another simple construction of regular two-distance sets. The interesting thing here is that we can construct infinitely many of them with the same number of vectors in the same dimension, which cannot happen for the tight frame case.
	\begin{theorem}\label{construction2}
		Let $\{x_i\}_{i=1}^m$ be a regular two-distance set at angles $\alpha, \beta$ with multiplicities $k_\alpha, k_\beta$, respectively. Assume that $X$ is not balanced. Denote by $\bar{x}=\sum_{i=1}^{m}x_i\not=0$, and $c$ its Grammian constant. Then for each $t\in \RR$, $\left\{y_i=\frac{1}{\sqrt{{1+2tc+t^2mc}}}(x_i+t\bar{x})\right\}_{i=1}^m$ is a regular two-distance set at angles $\frac{\alpha+2tc+t^2mc}{1+2tc+t^2mc}$ and $\frac{\beta+2tc+t^2mc}{1+2tc+t^2mc}$ with respective multiplicities $k_\alpha$ and $k_\beta$. This set is not balanced unless $t=-1/m$.
	\end{theorem}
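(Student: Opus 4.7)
The plan is to verify the claim by direct computation using only the bilinearity of the inner product and two easy identities. First, from the definition of the Grammian constant and Proposition \ref{pro1}, one reads off $\langle x_i, \bar{x}\rangle = \sum_{j=1}^{m}\langle x_i, x_j\rangle = c$ for every $i$ and $\|\bar{x}\|^2 = \sum_{i,j=1}^m \langle x_i,x_j\rangle = mc$. Expanding $\|x_i + t\bar{x}\|^2 = 1 + 2t\langle x_i, \bar{x}\rangle + t^2\|\bar{x}\|^2$ therefore gives precisely $1 + 2tc + t^2 mc$, which justifies the normalization. Because $X$ is not balanced we have $\bar{x}\neq 0$, so $c>0$; combined with the bound $c<m$ noted right after Theorem \ref{thm1}, the discriminant $4c(c-m)$ of the quadratic $t\mapsto 1+2tc+t^2 mc$ is negative, so this quantity is strictly positive for every $t\in\RR$ and each $y_i$ is a well-defined unit vector.

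Next I would compute the off-diagonal inner products. For $i\neq j$,
\[
\langle x_i + t\bar{x},\, x_j + t\bar{x}\rangle \;=\; \langle x_i, x_j\rangle + t\langle x_i, \bar{x}\rangle + t\langle \bar{x}, x_j\rangle + t^2\|\bar{x}\|^2 \;=\; \langle x_i, x_j\rangle + 2tc + t^2 mc,
\]
so dividing by $1+2tc+t^2mc$ yields the claimed inner-product formula. The affine map $s \mapsto \frac{s+2tc+t^2mc}{1+2tc+t^2mc}$ is a bijection, hence it sends $\alpha$ and $\beta$ to two distinct values (because $\alpha\neq\beta$), and it identifies the index sets $\mathcal{I}^\alpha_i(X)=\mathcal{I}^{\alpha'}_i(Y)$ and $\mathcal{I}^\beta_i(X)=\mathcal{I}^{\beta'}_i(Y)$. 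Regularity of $Y$ with the same multiplicities $k_\alpha,k_\beta$ follows immediately; distinctness of the $y_i$'s is recovered from the same formula since $\langle y_i, y_j\rangle = 1$ forces $\langle x_i, x_j\rangle = 1$, which would mean $x_i=x_j$.

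Finally, for the ``not balanced'' assertion, a direct computation gives
\[
\sum_{i=1}^{m}y_i \;=\; \frac{1}{\sqrt{1+2tc+t^2mc}}\sum_{i=1}^{m}(x_i+t\bar{x}) \;=\; \frac{1+mt}{\sqrt{1+2tc+t^2mc}}\,\bar{x},
\]
which is zero if and only if $t=-1/m$, since $\bar{x}\neq 0$.

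There is no real obstacle in this argument: everything reduces to two-line algebraic manipulations. The only subtlety worth flagging is verifying positivity of the normalization factor for every $t$, which the discriminant argument above handles via the inequalities $0<c<m$ available from $X$ being a non-balanced spherical two-distance set.
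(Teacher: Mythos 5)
Your argument is correct and follows essentially the same route as the paper: the single expansion $\langle x_i+t\bar{x}, x_j+t\bar{x}\rangle=\langle x_i,x_j\rangle+2tc+t^2mc$ using $\langle x_i,\bar{x}\rangle=c$ and $\|\bar{x}\|^2=mc$, followed by the computation of $\sum_i y_i$. Your additional verification that $1+2tc+t^2mc>0$ for all $t$ (via the discriminant and $0<c<m$) is a detail the paper leaves implicit, but it does not change the approach.
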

	\begin{proof}
		For any $i, j$, we have that
		\begin{align*}
		\langle x_i+t\bar{x}, x_j+t\bar{x}\rangle&=\langle x_i, x_j\rangle+t\langle x_i, \bar{x}\rangle+t\langle x_j, \bar{x}\rangle+t^2\langle \bar{x}, \bar{x}\rangle\\
		&=\langle x_i, x_j\rangle+2tc+t^2mc.
		\end{align*}
		This shows that $y_i's$ are unit norm and $Y$ is a regular two-distance set with angles as in the claim. This set is not balance since 
		\[\sum_{i=1}^{m} y_i=\frac{1}{\sqrt{{1+2tc+t^2mc}}}\sum_{i=1}^{m}(x_i+t\bar{x})=\frac{tm+1}{\sqrt{{1+2tc+t^2mc}}}\bar{x}\not=0,\]
		by our assumption.
	\end{proof}
	
	\begin{example} \label{exam1}
		Let $X=\{x_i\}_{i=1}^{\frac{(n-1)n}{2}}$ be the non-balanced, regular two-distance set for $\RR^n$ constructed in Example \ref{ex2}. Let $Y$ be the two-distance set constructed from $X$ by Theorem \ref{construction2}. Then for $t$ large enough, both angles of $Y$ are positive. Note that the cardinality of $Y$ attains the upper bound in Corollary \ref{posi angles}.	
	\end{example}
	
	\begin{corollary}
		There exist infinitely many regular two-distance sets with the same number of vectors in the same dimension. 
	\end{corollary}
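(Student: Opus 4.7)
The plan is to apply Theorem \ref{construction2} to a fixed non-balanced regular two-distance set and vary the parameter $t$, showing that the resulting one-parameter family contains infinitely many pairwise non-equivalent members. As a starting seed, I would take the non-balanced regular two-distance set $X=\{x_i\}_{i=1}^{n(n-1)/2}$ in $\RR^n$ produced in Example \ref{ex2} (equivalently, the one highlighted in Example \ref{exam1}); it is not balanced, so its Grammian constant $c$ is nonzero and the hypothesis of Theorem \ref{construction2} is satisfied.

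For each real $t$ with $1+2tc+t^2mc>0$, Theorem \ref{construction2} produces a regular two-distance set $Y_t=\{y_i(t)\}_{i=1}^m$ in the same ambient space $\RR^n$ (since the vectors $x_i+t\bar x$ live in $\spn\{x_1,\dots,x_m\}\subseteq\RR^n$) with the same cardinality $m=n(n-1)/2$ and with angles
\[
\alpha(t)=\frac{\alpha+2tc+t^2mc}{1+2tc+t^2mc},\qquad
\beta(t)=\frac{\beta+2tc+t^2mc}{1+2tc+t^2mc}.
\]
The first step of the argument is to check that these expressions depend non-trivially on $t$. Writing $u(t):=2tc+t^2mc$, one has $\alpha(t)-1=(\alpha-1)/(1+u(t))$, and $\alpha\neq 1$ since $\alpha$ is an inner product of two distinct unit vectors. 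Because $u(t)$ is a non-constant polynomial in $t$, the set of values $\{u(t):t\in\RR,\ 1+u(t)>0\}$ is infinite, and therefore so is the set $\{\alpha(t)\}$.

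Next I would observe that two regular two-distance sets with different angle pairs $\{\alpha,\beta\}$ cannot be unitarily equivalent, since unitary maps preserve inner products. Consequently, among the uncountable family $\{Y_t\}$, infinitely many have pairwise distinct angle pairs and are therefore pairwise non-unitarily-equivalent regular two-distance sets of $m=n(n-1)/2$ vectors in $\RR^n$. This yields the desired conclusion.

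The main (and essentially only) obstacle is the book-keeping in the previous paragraph: one must make sure that the map $t\mapsto\{\alpha(t),\beta(t)\}$ is not constant on any infinite set, which reduces to the observation that $\alpha(t)$ is a non-constant rational function of $t$, as verified above. Everything else follows immediately from Theorem \ref{construction2} and Example \ref{exam1}.
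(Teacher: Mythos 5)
Your proposal is correct and follows essentially the same route as the paper: take the non-balanced regular two-distance set from Example \ref{ex2} and apply Theorem \ref{construction2} with varying $t$. You additionally verify that $t\mapsto\alpha(t)$ is non-constant (so the family really contains infinitely many inequivalent sets), a detail the paper's two-line proof leaves implicit; this is a welcome bit of extra care, not a departure in method.
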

	\begin{proof}
		Again, let $X=\{x_i\}_{i=1}^{\frac{(n-1)n}{2}}$ be the regular two-distance frame for $\RR^n$ constructed in Example \ref{ex2}. This set is not balanced, so the claim follows by Theorem \ref{construction2}.
	\end{proof}

	The following result can be deduced from the correspondence between ETFs and a class of strongly regular graphs, see for example \cite{W0}. However, for the completeness of the paper, we will give a simple proof for this.
	\begin{proposition}\label{pro6}
		Let $X=\{x_i\}_{i=1}^m$ be an ETF for $\RR^n$ ($m>n+1$) at angle $\alpha$, meaning $|\langle x_i, x_j\rangle|=\alpha$, for all $i\not=j$. Assume that $\langle x_i, x_1\rangle =\alpha$, for all $i\geq 2$. Then $Y=\{x_i\}_{i=2}^m$ is a regular two-distance set for $\RR^n$ at angles $\alpha$ and $-\alpha$. Moreover, $Y$ is not balanced.
	\end{proposition}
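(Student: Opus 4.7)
The plan is to exploit the tight-frame reconstruction formula $\tfrac{m}{n}x_i=\sum_{j=1}^m\langle x_i,x_j\rangle x_j$ in two different ways: once with $i=1$ to handle non-balancedness, and once for a general $i\ge 2$, paired with the inner product against $x_1$, in order to pin down the multiplicities of $\pm\alpha$ inside $Y$.

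For the non-balancedness, plugging $i=1$ into the reconstruction formula and using the hypothesis $\langle x_1,x_j\rangle=\alpha$ for all $j\ge 2$ yields
$$\tfrac{m}{n}x_1=x_1+\alpha\sum_{j=2}^m x_j,\qquad\text{hence}\qquad \sum_{j=2}^m x_j=\frac{m-n}{n\alpha}\,x_1.$$
Since $m>n+1>n$ and the Welch bound forces $\alpha^2=\frac{m-n}{n(m-1)}>0$, the right-hand side is a nonzero scalar multiple of $x_1$, so $Y$ is not balanced.

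For regularity, fix $i\ge 2$ and set $k_i^+=|\{j\ne i:\langle x_i,x_j\rangle=\alpha\}|$ and $k_i^-=|\{j\ne i:\langle x_i,x_j\rangle=-\alpha\}|$, so that $k_i^++k_i^-=m-1$. By hypothesis $1\in\{j:\langle x_i,x_j\rangle=\alpha\}$, so inside $Y$ the multiplicity of $\alpha$ at $x_i$ is $k_i^+-1$ and of $-\alpha$ is $k_i^-$. Now take the inner product of the tight-frame expansion of $x_i$ against $x_1$: splitting the sum according to the sign of $\langle x_i,x_j\rangle$ and using $\langle x_j,x_1\rangle=\alpha$ for every $j\ge 2$ produces an equation of the shape
$$\tfrac{m}{n}\alpha \;=\; 2\alpha-\alpha^2+\alpha^2\bigl(k_i^+-k_i^-\bigr),$$
whose right-hand side depends on $i$ only through $k_i^+-k_i^-$. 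Hence $k_i^+-k_i^-$ is independent of $i\ge 2$, and together with $k_i^++k_i^-=m-1$ both multiplicities are constant. So $Y$ is a regular two-distance set at angles $\alpha$ and $-\alpha$ in $\RR^n$.

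There is no deep obstacle here beyond careful bookkeeping: one must keep track of the fact that $j=1$ always contributes $\alpha$ to the Gram row of $x_i$ but is removed in passing to $Y$, and that $x_i$ itself contributes the diagonal $1$ rather than $\alpha$ in the sum against $x_1$. Once these two off-by-one adjustments are done correctly, both the regularity and the non-balancedness fall out in one line each from the reconstruction formula.
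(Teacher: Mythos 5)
Your proof is correct and essentially identical to the paper's: your inner product of the reconstruction formula $\frac{m}{n}x_i=\sum_j\langle x_i,x_j\rangle x_j$ against $x_1$ is exactly the paper's computation of the $(i,1)$ entry of $G^2=\frac{m}{n}G$, yielding the same equation $2\alpha+(k_i^+-1)\alpha^2-k_i^-\alpha^2=\frac{m}{n}\alpha$ and hence constant multiplicities. For non-balancedness the paper simply pairs $\sum_{i\ge2}x_i$ with $x_1$ to get $(m-1)\alpha\neq 0$, which is a cosmetic variant of your identity $\sum_{j\ge2}x_j=\frac{m-n}{n\alpha}x_1$.
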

	\begin{proof}
		Let $G$ be the Gram matrix of $X$. Then $G$ is self-adjoint and all entries in the first row other than $G_{11}$ are $\alpha$. For a fixed row $i\geq 2$, let $k_\alpha$ be the number of $\alpha$ in this row. Since $G^2=\frac{m}{n}G$, it follows that
		\[2\alpha+(k_\alpha-1)\alpha^2-(m-k_\alpha-1)\alpha^2=\frac{m}{n}\alpha,\]
		which is equivalent to 
		\[k_\alpha=\frac{m}{2}+\frac{m-2n}{2n\alpha}.\]
		Thus $k_\alpha$ does not depend on the row $i$, so the claim follows.
		
		In order to see $Y$ is not balanced, we assume by way of contradiction that $\sum_{i=2}^{m}x_i=0$. Then $$0=\langle x_1, \sum_{i=2}^{m}x_i\rangle=(m-1)\alpha,$$ which cannot happen.	
	\end{proof}
	
It is known that the existence of ETFs of $2n$ vectors for $\RR^n$ implies $n$ is odd and $(2n-1)$ is the sum of two squares, see \cite{FM, STDH}. We will state a part of this result here as a simple consequence of our results.
	\begin{proposition}
		If there exists an ETF of $2n$ vectors for $\RR^n$, then $n$ must be odd.
	\end{proposition}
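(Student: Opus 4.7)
The plan is to combine Proposition~\ref{pro6} with Proposition~\ref{even multip}. Suppose there is an ETF $X=\{x_i\}_{i=1}^{2n}$ for $\RR^n$ at angle $\alpha$, and assume $n\geq 2$ (the case $n=1$ is trivial). By replacing each $x_i$ with $\epsilon_i x_i$ where $\epsilon_i=\mathrm{sign}(\langle x_i,x_1\rangle)$ for $i\geq 2$ and $\epsilon_1=1$, we still have an ETF (sign flips preserve absolute values of inner products and the span), and we may assume $\langle x_i,x_1\rangle=\alpha$ for all $i\geq 2$. Since $m=2n>n+1$, Proposition~\ref{pro6} applies and $Y=\{x_i\}_{i=2}^{2n}$ is a regular two-distance set in $\RR^n$ at angles $\alpha$ and $-\alpha$.

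The key step is computing the multiplicities of $\alpha$ and $-\alpha$ in $Y$. From the proof of Proposition~\ref{pro6}, the number of $\alpha$-entries in any row $i\geq 2$ of the Gram matrix $G$ of $X$ equals
\[k_\alpha=\frac{m}{2}+\frac{m-2n}{2n\alpha}.\]
With $m=2n$ this gives $k_\alpha=n$, independent of $\alpha$. The Gram matrix $G_Y$ of $Y$ is obtained from $G$ by deleting the first row and column; since $G_{i,1}=\alpha$ for every $i\geq 2$, exactly one $\alpha$-entry per row is removed. Therefore each row of $G_Y$ has $n-1$ off-diagonal entries equal to $\alpha$ and $(|Y|-1)-(n-1)=(2n-2)-(n-1)=n-1$ entries equal to $-\alpha$.

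Finally, $|Y|=2n-1$ is odd, so Proposition~\ref{even multip} forces both multiplicities of the angles of $Y$ to be even. Hence $n-1$ is even, which is to say $n$ is odd. The only real obstacle in this plan is the bookkeeping in the multiplicity calculation, in particular remembering to subtract the contribution of the deleted first column when passing from $G$ to $G_Y$; once that is done correctly, the parity constraint from Proposition~\ref{even multip} closes the argument immediately.
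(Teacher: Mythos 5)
Your proof is correct and follows essentially the same route as the paper: apply Proposition \ref{pro6} to get the regular two-distance set $Y=\{x_i\}_{i=2}^{2n}$, compute that the multiplicity of $\alpha$ in $Y$ is $k_\alpha-1=n-1$, and invoke Proposition \ref{even multip} on the odd-cardinality set $Y$. Your added care with the sign normalization and with subtracting the deleted first column is a welcome clarification but does not change the argument.
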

	\begin{proof}
		Suppose $X=\{x_i\}_{i=1}^{2n}$ is an ETF for $\RR^n$ at angle $\alpha$. We can assume that $\langle x_i, x_1\rangle=\alpha$ for all $i\geq 2$. By Proposition \ref{pro6}, $Y=\{x_i\}_{i=2}^{2n}$ is a regular two distance set at angles $\alpha$ and $-\alpha$. Note that the multiplicity of the angle $\alpha$ is $k_\alpha-1=n-1$, where $k_\alpha$ is computed as in the proof of Proposition \ref{pro6}. Since the cardinality of $Y$ is odd, by Proposition \ref{even multip}, $n-1$ is even, which is the claim.
	\end{proof}
	
	Example \ref{exam1} gave a family of maximal regular two-distance sets with two positive angles for any dimensions $n\not=(2k+1)^2-2, k\in \mathbb{N}$. Now we will present example of such sets in dimensions of form $n=(2k+1)^2-2$.
\begin{example}\label{non-tight maximal}
		Let $X=\{x_i\}_{i=1}^{\frac{n(n+1)}{2}}$ be an ETF in $\RR^n$ and suppose $\langle x_i, x_1\rangle=\alpha>0$ for all $i\geq 2$. Let $\bar{x}=\sum_{i=2}^{\frac{n(n+1)}{2}}x_i$. Then for each $t>0$, by Theorem \ref{construction2}, the normalized of the vectors $\{x_i+t\bar{x}\}_{i=2}^{\frac{n(n+1)}{2}}$ form a non-balanced, regular two-distance set of $\frac{n(n+1)}{2}-1=\frac{(n-1)(n+2)}{2}$ vectors in $\RR^n$ at angles shown in the theorem. Now let $t$ be large enough we get the set with two positive angles. This set is maximal by Corollary \ref{posi angles}.
\end{example}

Given an ETF, we can construct a balanced, two-distance tight frame as in the following.

\begin{theorem}\label{construction3}
	Let $X=\{x_i\}_{i=1}^m$ be an ETF for $\RR^n$ ($m>n+1$) at angle $\alpha$. Assume that $\langle x_i, x_1\rangle =\alpha$, for all $i\geq 2$. Let $P$ be the orthogonal projection onto $\spn\{x_1\}$. Then $Y=\left\{\frac{(I-P)x_i}{\|(I-P)x_i\|}\right\}_{i=2}^m$ is a balanced, two-distance tight frame for the space $x_1^\perp$ at angles $\frac{\alpha}{1+\alpha}$ and $\frac{-\alpha}{1-\alpha}$. 
\end{theorem}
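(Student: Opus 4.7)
The plan is to combine Proposition \ref{pro6} with Theorem \ref{thm2}, applied to $Y_{\text{full}}:=\{x_i\}_{i=2}^m$, and then verify tightness on $x_1^\perp$ directly.

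First, I would apply the $(m/n)$-tight-frame reconstruction formula to $x_1$:
\[
\frac{m}{n}x_1 = \sum_{i=1}^m \langle x_1,x_i\rangle x_i = x_1 + \alpha\sum_{i=2}^m x_i,
\]
which gives $\sum_{i=2}^m x_i = \frac{m-n}{n\alpha}\,x_1$. The crucial consequence is $\spn\bigl\{\sum_{i\geq 2}x_i\bigr\}=\spn\{x_1\}$, so the projection $P$ in the statement coincides with the projection used in Theorem \ref{thm2} applied to $Y_{\text{full}}$.

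Next, by Proposition \ref{pro6}, $Y_{\text{full}}$ is a non-balanced regular two-distance set in $\RR^n$ with angles $\alpha,-\alpha$, so Theorem \ref{thm2} applies. Its Grammian constant $c$ satisfies $c=\sum_{j=2}^m\langle x_i,x_j\rangle=\bigl\langle x_i,\frac{m-n}{n\alpha}x_1\bigr\rangle=\frac{m-n}{n}$ for any $i\geq 2$. Combined with the ETF parameter identity $\alpha^2=\frac{m-n}{n(m-1)}$ this yields $(m-1)\alpha^2=c$, equivalently $(m-1)(1-\alpha)(1+\alpha)=(m-1)-c$. Theorem \ref{thm2} (with ``$m$'' replaced by $m-1$) then exhibits $Y$ as a balanced regular two-distance set of distinct vectors in $x_1^\perp$ at angles
\[
\frac{(m-1)\alpha(1-\alpha)}{(m-1)-c}\quad\text{and}\quad \frac{-(m-1)\alpha(1+\alpha)}{(m-1)-c},
\]
which collapse to $\frac{\alpha}{1+\alpha}$ and $\frac{-\alpha}{1-\alpha}$ via the identity above.

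Finally, tightness requires a separate argument, since Theorem \ref{thm2} alone does not deliver it. Writing $Q=I-P$ and using $\sum_{i=1}^m x_i x_i^* = \frac{m}{n}I$, conjugation by $Q$ yields
\[
\sum_{i=2}^m (Qx_i)(Qx_i)^* = \tfrac{m}{n}\,Q - Q(x_1 x_1^*)Q = \tfrac{m}{n}\,I_{x_1^\perp},
\]
since $Qx_1=0$. Because $\|Qx_i\|^2=1-\alpha^2$ is constant over $i\geq 2$, rescaling shows that $Y$ is an $\frac{m-1}{n-1}$-tight frame for $x_1^\perp$. The main obstacle is purely bookkeeping in the angle simplification; the essential conceptual point is the proportionality $\sum_{i\geq 2}x_i\parallel x_1$, which is what lets Theorem \ref{thm2} be invoked with $P$ the projection onto $\spn\{x_1\}$.
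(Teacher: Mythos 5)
Your proposal is correct, but it is organized differently from the paper's proof, which is a direct computation: the paper simply notes $\langle (I-P)x_i,(I-P)x_j\rangle=\langle x_i,x_j\rangle-\alpha^2$ and $\|(I-P)x_i\|^2=1-\alpha^2$, reads off the angles $\frac{\alpha-\alpha^2}{1-\alpha^2}=\frac{\alpha}{1+\alpha}$ and $\frac{-\alpha-\alpha^2}{1-\alpha^2}=\frac{-\alpha}{1-\alpha}$, and uses the reconstruction formula $\sum_{i\geq 2}x_i=\frac{m-n}{n\alpha}x_1$ only to conclude balancedness (the sum is annihilated by $I-P$); tightness is left to the earlier remark that orthogonal projections of tight frames are tight. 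You instead route everything through Proposition \ref{pro6} and Theorem \ref{thm2}, and the observation that makes this legitimate --- that $\sum_{i\geq 2}x_i$ is a nonzero multiple of $x_1$, so the projection in Theorem \ref{thm2} applied to $\{x_i\}_{i=2}^m$ is exactly the $P$ of the statement --- is the right one; your Grammian constant $c=\frac{m-n}{n}=(m-1)\alpha^2$ and the resulting angle simplification via the Welch identity $\alpha^2=\frac{m-n}{n(m-1)}$ all check out, as does the frame bound $\frac{m}{n(1-\alpha^2)}=\frac{m-1}{n-1}$. What your version buys is that regularity, distinctness, and balancedness come for free from the cited results, and you make the tightness argument explicit (conjugating $\sum_i x_ix_i^*=\frac{m}{n}I$ by $Q=I-P$ and using $Qx_1=0$), which the paper glosses over; what it costs is the extra step of identifying the two projections and importing the Welch-bound identity, which the paper's direct computation avoids entirely. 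Both proofs are sound; yours is a clean repackaging of the same underlying computation.
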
 
\begin{proof}
	We have that
	\[Px=\langle x, x_1\rangle x_1, \mbox{ for all } x\in \RR^n.\] Hence, for all $i, j\geq 2$, 
	\[\langle (I-P)x_i, (I-P)x_j=\langle x_i, x_j\rangle-\langle x_i, x_1\rangle\langle x_j, x_1\rangle=\langle x_i, x_j\rangle-\alpha^2.\]
	It follows that $\|(I-P)x_i\|=\sqrt{1-\alpha^2}$ for all $i\geq 2$, and $Y$ is a two-distance tight frame with angles as in the claim. 
	
	To see $Y$ is balanced, we note that
	\[\frac{m}{n}x_1=\sum_{i=1}^{m}\langle x_1, x_i\rangle x_i=x_1+\alpha\sum_{i=2}^{m}x_i,\]
	which implies 
	\[\sum_{i=2}^{m}x_i=\frac{m-n}{n\alpha}x_1.\]
	Therefore,
	\begin{align*}
	\sum_{i=2}^{m}(I-P)x_i&=\sum_{i=2}^{m}x_i-\sum_{i=2}^{m}Px_i\\
	&=\frac{m-n}{n\alpha}x_1-\sum_{i=2}^{m}\alpha x_1=\left[\frac{m-n}{n\alpha}-(m-1)\alpha\right]x_1. 
	\end{align*}
	By the definition of $P$, $	\sum_{i=2}^{m}(I-P)x_i$ must be zero.
\end{proof}
\begin{remark}
	From the proof above, we get that the value for $\alpha$ is $\sqrt{\frac{m-n}{n(m-1)}}$, which is called the Welch bound, see for example in \cite{FM}. The multiplicity for the angle $\frac{\alpha}{1+\alpha}$ is $k_\alpha-1$, where $k_\alpha$ is computed as in the proof of Proposition \ref{pro6}. 
\end{remark}


From a regular two-distance set in $\RR^n$, we can lift it to an another regular two-distance set in one higher dimension with desired angles or Grammian constant.
\begin{theorem}\label{desire angles}
	Let $X=\{x_i\}_{i=1}^m$ be a regular two-distance set in $\RR^n$ at angles $\alpha$ and $\beta$ with multiplicities $k_\alpha$ and $k_\beta$. Let $c$ be its Grammian constant.
	\begin{enumerate}
		\item For any $\alpha\le \alpha'\leq 1$, there is a $m$-element regular two-distance set in $\RR^{n+1}$ such that $\alpha'$ is one of its angles with multiplicity $k_\alpha$.
		\item  For any $c\le c' <m$, there is a $m$-element regular two-distance set in $\RR^{n+1}$ with Grammian constant $c'$.
	\end{enumerate}
\end{theorem}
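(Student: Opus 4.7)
The plan is to lift $X$ to $\RR^{n+1}$ by appending a common extra coordinate to every vector and then choose the size of that coordinate to hit the prescribed angle (part 1) or Grammian constant (part 2). Write $\RR^{n+1}=\RR^n\oplus\RR$ and, for parameters $s,t\in\RR$ with $s^2+t^2=1$ to be determined, set
\[
y_i:=(s\,x_i,\,t)\in\RR^{n+1},\qquad i=1,\dots,m.
\]
Each $y_i$ is unit norm because $\|x_i\|=1$, and for $i\neq j$ a direct computation gives
\[
\langle y_i,y_j\rangle=s^2\langle x_i,x_j\rangle+t^2,
\]
so the inner product takes only the two values $\alpha'':=s^2\alpha+t^2$ and $\beta'':=s^2\beta+t^2$. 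These two values are distinct as soon as $s\neq 0$ (because $\alpha\neq\beta$), and by construction $\langle y_i,y_j\rangle=\alpha''$ iff $\langle x_i,x_j\rangle=\alpha$; hence $Y:=\{y_i\}_{i=1}^m$ is a regular two-distance set in $\RR^{n+1}$ whose multiplicities agree with those of $X$, namely $k_\alpha$ and $k_\beta$. Distinctness of the $y_i$'s is automatic from distinctness of the $x_i$'s whenever $s\neq 0$.

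For part (1), I would match the first new angle to $\alpha'$ by solving $s^2\alpha+(1-s^2)=\alpha'$, which yields $s^2=(1-\alpha')/(1-\alpha)$. Since $\alpha<1$ (as $X$ has at least two distinct vectors) and $\alpha\le\alpha'\le 1$, this value of $s^2$ lies in $[0,1]$, so an admissible real pair $(s,t)$ exists and the lifted set $Y$ has $\alpha'$ as an angle with multiplicity $k_\alpha$.

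For part (2), I would compute the Grammian constant of $Y$ in terms of $s$. Using $k_\alpha\alpha+k_\beta\beta=c-1$ and $k_\alpha+k_\beta=m-1$,
\[
1+k_\alpha\alpha''+k_\beta\beta''=1+s^2(c-1)+(1-s^2)(m-1)=m+s^2(c-m).
\]
Setting this equal to $c'$ gives $s^2=(m-c')/(m-c)$, which lies in $[0,1]$ precisely when $c\le c'\le m$; the strict inequality $c'<m$ ensures $s^2>0$, so $s\neq 0$ and the construction goes through.

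There is essentially no hard step here: once the lift is written down, both parts reduce to an affine equation in $s^2$, and the ranges prescribed in the hypotheses are exactly what is needed for a solution $s^2\in[0,1]$ with $s\neq 0$. The only point demanding any care is the edge case $\alpha'=1$ (respectively $c'=m$), where $s=0$ would collapse all $y_i$'s to a single vector; in part (1) this is only borderline, and in part (2) it is excluded by the strict bound $c'<m$.
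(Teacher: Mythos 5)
Your proposal is correct and is essentially identical to the paper's proof: the paper also lifts via $y_i=(tx_i,\sqrt{1-t^2})$ (your $s$ playing the role of its $t$) and solves the same affine equation in $t^2$ for each part, obtaining $t=\sqrt{(m-c')/(m-c)}$ in part (2). Your explicit remark about the degenerate case $s=0$ is a small bonus the paper omits, but otherwise the arguments coincide.
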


\begin{proof}(1)  
	We define 
	\[ y_i=(tx_i,\sqrt{1-t^2}),\mbox{ for all }i\in [m].\]
	Then 
	\[ \langle y_i,y_j\rangle = \begin{cases} 
	t^2\alpha+1-t^2\mbox{ if }\langle x_i,x_j\rangle =\alpha\\
	t^2\beta+1-t^2\mbox{ if }\langle x_i,x_j\rangle =\beta.
	\end{cases} \]
	The continuous function $f(t)=t^2\alpha+1-t^2$ equals 1 when $t=0$ and equals $\alpha$ when $t=1$. So we can choose $t$ so that $f(t)=\alpha'$.
	
	(2)  Let $y_i's$ be as above. Since the Grammian constant of $X$ is $c$, we have
	\[1+k_\alpha \alpha+k_\beta\beta=c.\]
	Hence, 
	\begin{align*}
		1+k_\alpha(t^2\alpha+1-t^2)+k_\beta(t^2\beta+1-t^2) &= 1+t^2(k_\alpha\alpha+k_\beta\beta)+(1-t^2)(k_\alpha+k_\beta)\\
		&=1+ t^2(c-1)+(1-t^2)(m-1)\\
		&= t^2c+(1-t^2)m.
	\end{align*}
Thus, $t^2c+(1-t^2)m$ equals $c'$ precisely when $t=\sqrt{\frac{m-c'}{m-c}}$.
\end{proof}

\begin{remark}
	If we do not impose any condition on angles or Grammian constants, then for each $t$, the set $\{y_i\}_{i=1}^m$ constructed in the proof of Theorem \ref{desire angles} is a regular two-distance set in $\RR^{n+1}$. This gives another way to construct infinitely many regular two-distance sets from a given one.
\end{remark}
\section{Connection with equiangular lines}

A set of lines in Euclidean space is called equiangular, if the angle between
each pair of lines is the same. In other words, if we choose a unit vector that spans each line, then this set of vectors forms a spherical two-distance set at angles $\{\alpha, -\alpha\}$ for some $\alpha \in [0, 1)$.

It has been shown that from a spherical two-distance set of $m$ vectors in $\RR^n$ with angles $\alpha, \beta$ satisfying $\alpha+\beta<0$, we can construct a set of $m$ equiangular lines in $\RR^{n+1}$, see \cite{DJS, GY}. We will restate it as in the following proposition.
\begin{proposition}\label{pro5}
	Let $\{x_i\}_{i=1}^m$ be a spherical two-distance set in $\RR^n$ at angles $\alpha, \beta$ such that $\alpha+\beta<0$. Then there exist $m$ equiangular lines in $\RR^{n+1}$, and they can be constructed explicitly.
\end{proposition}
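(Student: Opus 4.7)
The plan is to lift each $x_i$ into $\RR^{n+1}$ by appending a common positive last coordinate, chosen so that the two possible inner-product values become negatives of one another. Concretely, for a parameter $s>0$ to be determined, I would set
\[ y_i = (x_i,\, s) \in \RR^{n+1}, \qquad i \in [m], \]
so that $\langle y_i, y_j\rangle = \langle x_i, x_j\rangle + s^2$ takes one of the two values $\alpha + s^2$ or $\beta + s^2$. To get equiangular lines I want $|\alpha + s^2| = |\beta + s^2|$, and since $\alpha\neq\beta$ this forces
\[ s^2 \;=\; -\frac{\alpha+\beta}{2}. \]
This is exactly where the hypothesis $\alpha+\beta<0$ is needed: it ensures $s^2>0$, so a real $s$ exists.

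Next I would normalize. Since each $x_i$ is a unit vector, $\|y_i\|^2 = 1 + s^2 = (2-\alpha-\beta)/2 > 0$, and the unit vectors $z_i := y_i/\|y_i\|$ satisfy
\[ \langle z_i, z_j\rangle \;=\; \frac{\langle x_i, x_j\rangle + s^2}{1+s^2} \;=\; \pm\,\frac{\alpha-\beta}{\,2-\alpha-\beta\,}, \]
with sign depending on whether $\langle x_i,x_j\rangle=\alpha$ or $\beta$. Hence the lines $\RR z_i$ are pairwise at the single common angle $\arccos\!\bigl(|\alpha-\beta|/(2-\alpha-\beta)\bigr)$, which is what equiangularity requires.

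Finally I would check that these really are $m$ distinct lines. The vectors $y_i$ all share the same strictly positive last coordinate $s$, so $y_i = -y_j$ is impossible; and $y_i = y_j$ would force $x_i = x_j$, contradicting distinctness of the original two-distance set. Hence the $m$ normalized vectors $z_i$ span $m$ distinct equiangular lines in $\RR^{n+1}$, and the construction $x_i \mapsto (x_i, s)/\sqrt{1+s^2}$ with $s=\sqrt{-(\alpha+\beta)/2}$ is fully explicit. There is no genuine obstacle here; the only subtlety is recognizing that the sign condition $\alpha+\beta<0$ is precisely the feasibility condition for $s$ to be real.
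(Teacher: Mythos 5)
Your proposal is correct and is essentially the paper's own construction in a different parametrization: appending $s=\sqrt{-(\alpha+\beta)/2}$ and normalizing gives exactly the vectors $(tx_i,\sqrt{1-t^2})$ with $t=\sqrt{2/(2-(\alpha+\beta))}$ used in the paper, and both arguments hinge on the same observation that $\alpha+\beta<0$ makes the lifting real and the two shifted inner products negatives of one another. The only (harmless) addition is your explicit check that the $m$ lines are distinct, which the paper omits.
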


\begin{proof}
		We define a set of unit vectors in $\RR^{n+1}$ by
		\[ y_i=(tx_i,\sqrt{1-t^2}),\mbox{ for all }i\in [m].\]
		Then 
		\[ \langle y_i,y_j\rangle = \begin{cases} 
		t^2\alpha+1-t^2\mbox{ if }\langle x_i,x_j\rangle =\alpha\\
		t^2\beta+1-t^2\mbox{ if }\langle x_i,x_j\rangle =\beta.
		\end{cases} \]
		Now let $t=\sqrt{\frac{2}{2-(\alpha+\beta)}}$, then $t^2\alpha+1-t^2=-(t^2\beta+1-t^2)$. Thus, these vectors $\{y_i\}_{i=1}^m$ define a set of $m$ equiangular lines in $\RR^{n+1}$.
\end{proof}

Several examples of maximal equiangular lines in low dimensions can be constructed by the method of Proposition \ref{pro5}.

\begin{example} [see also in \cite{G}]
	Recall in Example \ref{ex2}, we constructed maximal two-distance tight frames of $\frac{(n-1)n}{2}$ vectors in $\RR^{n-1}$ at angles $\alpha=-\frac{2}{n-2}$ and $\beta=\frac{n-4}{2(n-2)}$. We see that $\alpha+\beta<0$ if $n<8$. Thus, for dimensions less than 8, we can use these two-distance frames to construct equiangular lines in spaces of one higher dimensions.  In particular, we obtain maximal equiangular lines in dimensions 4, and 5 with 6 and 10 lines, respectively.
\end{example}

Now we give another way to construct equiangular lines from two-distance sets but in the same space.
\begin{proposition}\label{lastpro}
	Let $\{x_i\}_{i=1}^m$ be a regular two-distance set in $\RR^n$ at angles $\alpha, \beta$ in $\RR^n$ and the Grammian constant $c>0$. If $\alpha+\beta\leq 2c/m$, then there exist $m$ equiangular lines in $\RR^{n}$.
\end{proposition}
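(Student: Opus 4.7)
The plan is to apply the construction of Theorem \ref{construction2} to $\{x_i\}_{i=1}^m$ with the parameter $t$ chosen so that the two resulting angles are negatives of each other; the new unit vectors will then realize $m$ equiangular lines in $\RR^n$. Since $c>0$, Proposition \ref{pro1} ensures that $X$ is not balanced, so Theorem \ref{construction2} does apply.

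By that theorem, the new angles are
\[\alpha' = \frac{\alpha + 2tc + t^2 mc}{1 + 2tc + t^2 mc}, \qquad \beta' = \frac{\beta + 2tc + t^2 mc}{1 + 2tc + t^2 mc}.\]
Imposing $\alpha' + \beta' = 0$, i.e.\ requiring the numerators to sum to zero, reduces to the quadratic
\[2mc\, t^2 + 4c\, t + (\alpha + \beta) = 0\]
in $t$, whose discriminant equals $16c^2 - 8mc(\alpha+\beta) = 8c\bigl[2c - m(\alpha+\beta)\bigr]$. Because $c > 0$, this discriminant is nonnegative precisely when $\alpha + \beta \leq 2c/m$, which is the hypothesis, so a real root $t_0$ exists.

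It remains to verify that the construction is well-defined at $t_0$, i.e.\ that the normalizing factor $1 + 2tc + t^2 mc$ is positive there. Viewing it as a separate quadratic in $t$, its discriminant is $4c^2 - 4mc = 4c(c - m)$, which is strictly negative because $0 < c < m$; thus the quadratic is strictly positive for every $t$. Therefore the vectors $\{y_i\}_{i=1}^m$ from Theorem \ref{construction2} form a regular two-distance set in $\RR^n$ with angle set $\{\alpha', -\alpha'\}$, and so they span $m$ equiangular lines in $\RR^n$. I do not anticipate any substantive obstacle here; the main content is the discriminant computation, which lines up with the stated bound precisely because of the factor $2/m$.
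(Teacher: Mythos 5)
Your proof is correct and follows essentially the same route as the paper: apply Theorem \ref{construction2} and choose $t$ so that the two new angles are negatives of each other, with the discriminant condition of the resulting quadratic in $t$ reducing exactly to $\alpha+\beta\leq 2c/m$. You additionally verify that the normalizing factor $1+2tc+t^2mc$ is positive for all $t$ (using $0<c<m$), a detail the paper leaves implicit.
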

\begin{proof}
	Let $\bar{x}=\sum_{i=1}^{m}x_i$. By Theorem \ref{construction2},  for each $t\in \RR$, the set $$\left\{y_i=\frac{1}{\sqrt{{1+2tc+t^2mc}}}(x_i+t\bar{x})\right\}_{i=1}^m$$ is a regular two-distance set at angles $\frac{\alpha+2tc+t^2mc}{1+2tc+t^2mc}$ and $\frac{\beta+2tc+t^2mc}{1+2tc+t^2mc}$. This set spans a set of equiangular lines if the equation
	$$\frac{\alpha+2tc+t^2mc}{1+2tc+t^2mc}=-\frac{\beta+2tc+t^2mc}{1+2tc+t^2mc}$$ has solution in $t$, which is equivalent to $\alpha+\beta\leq 2c/m$. This completes the proof.
	 
\end{proof}
\begin{corollary}\label{cor3}
	Let $([n], \mathcal{B})$ be a $2$-$(n, b, r, k, \lambda)$ quasi-symmetric design with intersection numbers $x$ and $y$. If $x+y\leq 2kr/b$, then there exist $b$ equiangular lines in $\RR^{n}$.
\end{corollary}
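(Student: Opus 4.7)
The plan is to combine the construction of Theorem~\ref{construction1}(1) with Proposition~\ref{lastpro}. Given the quasi-symmetric design $([n],\mathcal{B})$, I would first invoke Theorem~\ref{construction1}(1) to obtain the family $\{x_{\mathcal{J}}\}_{\mathcal{J}\in\mathcal{B}}$ of $b$ unit vectors in $\RR^{n}$, where $x_{\mathcal{J}}=\frac{1}{\sqrt{k}}\sum_{i\in\mathcal{J}}e_{i}$. By that theorem this is a regular two-distance set with angles $\alpha=x/k$ and $\beta=y/k$.

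Next I would read off the Grammian constant of this set from the computation already carried out inside the proof of Theorem~\ref{construction1}(1), where it is shown that
\[
1+s\,\tfrac{y}{k}+(b-s-1)\,\tfrac{x}{k}=r.
\]
Thus $c=r>0$, so the set satisfies the positivity hypothesis of Proposition~\ref{lastpro}. The hypothesis $\alpha+\beta\le 2c/m$ of Proposition~\ref{lastpro} now reads
\[
\frac{x}{k}+\frac{y}{k}\le \frac{2r}{b},
\]
which is precisely $x+y\le 2kr/b$, i.e.\ the assumption of the corollary. Applying Proposition~\ref{lastpro} then yields $b$ equiangular lines in $\RR^{n}$.

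Since every ingredient has been established previously, the proof is essentially a one-line reduction: verify that the frame from Theorem~\ref{construction1}(1) has the right angles and Grammian constant, and check that the numerical condition $x+y\le 2kr/b$ is exactly what Proposition~\ref{lastpro} requires. There is no real obstacle; the only thing to keep an eye on is making sure the value of $c$ used in Proposition~\ref{lastpro} matches the value $r$ coming out of Theorem~\ref{construction1}(1), and that $r>0$ so that $c>0$ holds as required.
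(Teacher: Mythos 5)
Your proposal is correct and matches the paper's own proof: both invoke Theorem~\ref{construction1}(1) to get the regular two-distance set of $b$ vectors in $\RR^n$ at angles $x/k$ and $y/k$, note that its Grammian constant is $r>0$ from the computation in that theorem's proof, and then apply Proposition~\ref{lastpro}, whose hypothesis $\alpha+\beta\le 2c/m$ becomes exactly $x+y\le 2kr/b$.
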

\begin{proof}
	By Theorem \ref{construction1}, there exist a regular two-distance set of $b$ vectors in $\RR^n$ at angles $x/k$ and $y/k$. Note that the Grammian constant of this set is $r>0$ as shown in the proof of the theorem. The result then follows by Proposition \ref{lastpro}.
\end{proof}
\begin{remark}
	Corollary \ref{cor3} gives a sufficient condition to construct equiangular lines from quasi-symmetric designs. Note that the author in \cite{G} (Theorem 6.2) gave a more general construction of equiangular lines using block sets which are not necessary quasi-symmetric designs. Our theorem and proof are not identical because we discovered it independenty.  But, the two are fundamentally the same.
\end{remark}
\begin{example}[see also in \cite{ G}]
	Using quasi symmetric design with parameters $(v, b, r, k, \lambda)=(22, 176, 56, 7, 16)$, and intersection numbers $x=1, y=3$ which exists (see \cite{CD}), we get 176 lines in $\mathbb{R}^{22}$, which is maximal. 
\end{example}
Relying on the known-bound for the maximal number of equiangular lines in certain dimensions, we can use Corollary \ref{cor3} to confirm the non-existence of some quasi-symmetric designs.

\begin{example}
	There is no quasi-symmetric design of parameters $(v, b, r, k, \lambda)=(9, 36, 20, 5, 10)$ with intersection numbers $x=1, y=3$ since the maximal number of equiangular lines in $\mathbb{R}^9$ is $28<b=36$. Likewise, since the maximal number of equiangular lines in $\mathbb{R}^{19}$ is on the range $72-75$, there does not exist a quasi-symmetric design of parameters $(v, b, r, k, \lambda, x, y)=(19, 76, 36, 9, 16, 3, 5)$.
\end{example}

In the following, we will make another connection between quasi-symmetric designs and equiangular lines. 

It has been shown that the maximum number of equiangular lines has not been established yet even in some low dimensions. For instance, in dimensions $n=42, 45$ and $46$, the bound is on the range $276-288$, $344-540$ and $344-736$, respectively, see the table 12.3 page 289 in \cite{W}. 

The existence of the following quasi-symmetric designs has not been confirmed yet, see \cite{CD}. However, if they exist, then we will have more information about the maximal number of equiangular lines in dimensions mentioned above. Namely, the existence of a  quasi-symmetric design with parameters $(v, b, r, k, \lambda, x, y)=(42, 287, 123, 18, 51, 6, 9)$ ensures the existence of 287 equiangular lines in $\RR^{42}$. Also, a quasi-symmetric design with $(v, b, r, k, \lambda, x, y)=(45, 396, 132, 15, 42, 3, 6)$ can be used to constructed 396 equiangular lines in $\RR^{45}$. Finally, if a quasi-symmetric design with $(v, b, r, k, \lambda, x, y)=(46, 621, 216, 16, 72, 4, 7)$ exists, then there are at least 621 equiangular lines  in $\RR^{46}$.

In \cite{G} it is shown that quasi-symmetric designs naturally appear from the structure of certain equiangular tight frames.  This result is used to classify the known sets of ETFs that saturate the absolute bound $n(n+1)/2$.

As a final result of the paper, we will give some new necessary conditions for the existence of quasi-symmetric designs with parameters $(v, b, r, k, \lambda, x, y)$.

\begin{proposition} Suppose there exists a quasi-symmetric design with parameters $(v, b, r, k, \lambda, x, y)$. Then we have the following:
	\begin{enumerate}
		\item $x\leq \frac{k^2}{v}\leq y$, and the equality cannot happen in both places at the same time. Moreover, if $x=\frac{k^2}{v}$ or $y=\frac{k^2}{v}$, then $b$ is multiple of $b-v+1$. Namely, $b=(s+1)(b-v+1)$ if $x=\frac{k^2}{v}$, and $b=(b-s-1)(b-v+1)$ if $y=\frac{k^2}{v}$, where $s=\dfrac{k(r-1)-x(b-1)}{y-x}$.
		
		\item If $b$ is odd then $s=\dfrac{k(r-1)-x(b-1)}{y-x}$ is even. In particular $k(r-1)$ must be even for all possible values of the intersection numbers $x, y$.
	\end{enumerate}
\end{proposition}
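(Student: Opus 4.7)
The plan is to apply the machinery of Sections~2 and~3 to the balanced two-distance tight frame produced by part~(2) of Theorem~\ref{construction1}. From $([v],\mathcal{B})$ one obtains a balanced tight frame $X$ of $b$ unit vectors in $\RR^{v-1}$ at angles
\[
\alpha=\frac{xv-k^{2}}{k(v-k)},\qquad \beta=\frac{yv-k^{2}}{k(v-k)},
\]
with multiplicities $b-s-1$ and $s$, respectively. Almost everything in the statement will be read off from $\alpha,\beta$ via previously proved structural theorems.

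For part~(1), since $b>v$ the frame $X$ has more than $\dim(\RR^{v-1})+1=v$ vectors, so Theorem~\ref{angles of tight} forces $\alpha\beta\le 0$. Because $x<y$ implies $\alpha<\beta$, this gives $\alpha\le 0\le\beta$, which is exactly $x\le k^{2}/v\le y$; simultaneous equality would collapse $\alpha=\beta$ and contradict $x\ne y$. When one of the inequalities is sharp the corresponding angle vanishes, so I apply Proposition~\ref{zero_angle} to $X$. Since $X$ is balanced (Grammian constant $0$), an analysis of the proof of Proposition~\ref{zero_angle} singles out the Naimark-complement branch: the normalized Naimark complement of $X$, which lives in $\RR^{b-v+1}$, must be an integer number of copies of an orthonormal basis of $\RR^{b-v+1}$. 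Counting the number of frame vectors on both sides yields $b=(k_{\beta}+1)(b-v+1)$, where $k_{\beta}$ is the multiplicity of the \emph{nonzero} angle in $X$. Reading off that multiplicity as $s$ or $b-s-1$ according to whether $x=k^{2}/v$ or $y=k^{2}/v$ gives the divisibility relations in the statement.

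For part~(2), the frame $X$ has odd cardinality $b$ with multiplicities $s$ and $b-s-1$, so Proposition~\ref{even multip} forces both to be even; since $b$ is odd this is the single condition that $s$ is even. Rewriting the formula $s=\bigl(k(r-1)-x(b-1)\bigr)/(y-x)$ as
\[
s(y-x)=k(r-1)-x(b-1),
\]
and using that both $s$ and $b-1$ are even, the left side and $x(b-1)$ are each even, so $k(r-1)$ is even.

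The main obstacle is the bookkeeping in the equality cases of~(1): one must correctly identify which of $\alpha,\beta$ plays the role of the zero angle in Proposition~\ref{zero_angle}, which multiplicity then corresponds to $k_\beta$ there, and verify that balancedness rules out the ``$X$ itself is copies of an orthonormal basis'' branch (so that the factor $b-v+1$ from the Naimark complement, rather than $v-1$, is the correct one). A secondary subtlety is checking that the nonzero angle in $X$ is strictly less than $1$ (equivalently $y<k$), which is guaranteed because distinct blocks have intersection of size at most $k-1$; this is what forces Proposition~\ref{zero_angle} into the Naimark-complement branch.
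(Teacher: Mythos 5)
Your proposal follows the paper's argument step for step: form the balanced two-distance tight frame of $b$ unit vectors in $\RR^{v-1}$ via Theorem~\ref{construction1}(2), deduce $x\le k^2/v\le y$ from Theorem~\ref{angles of tight}, obtain the divisibility claims from Proposition~\ref{zero_angle}, and obtain part (2) from Proposition~\ref{even multip}. The extra care you take --- verifying $b>v$ so that Theorem~\ref{angles of tight} applies, and using $y<k$ to exclude the ``copies of an orthonormal basis'' branch of Proposition~\ref{zero_angle} --- is correct and goes beyond what the paper records.

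The gap is in the final ``reading off'' step of part (1). You correctly set up $b=(k_\beta+1)(b-v+1)$ with $k_\beta$ the multiplicity of the \emph{nonzero} angle, but then assert that substituting $k_\beta=b-s-1$ (the case $y=k^2/v$) ``gives the divisibility relations in the statement.'' It does not: it gives $b=(b-s)(b-v+1)$, whereas the statement (and the paper's own proof) claims $b=(b-s-1)(b-v+1)$. The version your computation actually produces is the correct one. For the $2$-$(4,6,3,2,1)$ design of all $2$-subsets of $[4]$, with $x=0$, $y=1=k^2/v$ and $s=4$, one has $(b-s)(b-v+1)=2\cdot 3=6=b$ while $(b-s-1)(b-v+1)=3\ne 6$. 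So your derivation in fact contradicts the stated formula, and you should have flagged the discrepancy rather than declaring a match. Separately, your own branch analysis proves more than you claim in the other equality case: for a balanced tight frame with a zero angle, the identities $1+k_\alpha\alpha+k_\beta\beta=0$ and $1+k_\alpha\alpha^2+k_\beta\beta^2=b/(v-1)$ force the nonzero angle to be strictly negative, while $x=k^2/v$ would make the nonzero angle $(yv-k^2)/(k(v-k))$ positive. Hence $x<k^2/v$ always holds strictly, that half of the ``moreover'' clause is vacuous, and your uniform application of the Naimark branch to both equality cases silently passes over this.
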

\begin{proof}
	
	(1): By (2) of Theorem \ref{construction1}, we get a balanced, regular two-distance tight frame for $\RR^{v-1}$ at angles $\dfrac{xv-k^2}{k(n-k)}$ and $\dfrac{yv-k^2}{k(n-k)}$ with multiplicities $b-s-1$ and $s$, respectively. By Theorem \ref{angles of tight}, this frame must have one non-negative angle and one non-positive angle. Hence, $x\leq \frac{k^2}{v}\leq y$.
	
	Now we assume that $x=k^2/v$. Then by Proposition \ref{zero_angle}, the Naimark complement of this frame is $(s+1)$ copies of an orthonormal basis for $R^{b-(v-1)}$, which is the claim. Similarly, we get the result for the case $y=k^2/v$.
	
	(2): Again, let $Y$ be the frame constructed by (2) of Theorem \ref{construction1}. Since the number of vectors of $X$ is odd, the multiplicities must be even by Proposition \ref{even multip}. 
\end{proof}

\begin{remark}
	It has been shown that $k^2/v<\lambda$, for example see \cite{P}. However, we observed that most quasi-symmetric designs have $y$ much smaller than $\lambda$. So the condition $\frac{k^2}{v}\leq y$ is somewhat stronger.  
\end{remark}

\end{document}